\newcommand{\ie}{{\it i.e. }}
\renewcommand{\P}{\mathbb{P}}  
\renewcommand{\Pr}[1]{\mathbb{P}\left( #1 \right)}
\newcommand{\Ex}[1]{\mathbb{E}\left[ #1 \right]}  
\newcommand{\Exs}[2]{\mathbb{E}_{#1}\left[ #2 \right]}  
\newcommand{\iid}{\stackrel{\text{iid}}{\sim}}
\newcommand{\Bin}{\textnormal{Bin}}
\newcommand{\indic}[1]{\mathbf{1}_{[#1]}}
\newcommand{\R}{\mathbb{R}}  
\newcommand{\Z}{\mathbb{Z}}  
\newcommand{\N}{\mathbb{N}}  
\newcommand{\rvseq}[1]{\left(#1_n \right)_{n \in \N}}
\newcommand{\tolim}[1]{\underset{#1 \to \infty}{\longrightarrow}}
\newcommand{\dc}[1]{\overset{(d)}{\underset{#1 \to \infty}{\longrightarrow}}}
\newcommand{\pc}[1]{\overset{(\P)}{\underset{#1 \to \infty}{\longrightarrow}}}
\newcommand{\asc}[1]{\overset{\text{a.s.}}{\underset{#1 \to \infty}{\longrightarrow}}}
\newcommand{\tr}[1]{\mathop\mathrm{tr}\parent{#1}}
\newcommand{\diago}[1]{\mathop\mathrm{diag}\parent{#1}}
\DeclareMathOperator*{\esssup}{ess\,sup}
\DeclareMathOperator*{\essinf}{ess\,inf}
\newcommand{\inner}[2]{\langle{#1},{#2}\rangle}  
\newcommand{\binner}[2]{\left\langle{#1},{#2}\right\rangle}  
\def\<{\left\langle}  
\def\>{\right\rangle}
\newcommand{\norm}[1]{\left\| {#1} \right\|}  
\newcommand{\tvnorm}[1]{\left\|{#1}\right\|_{TV}} 
\newcommand{\snorm}[1]{\left\|{#1}\right\|_{\psi_2}}  
\newcommand{\hamdist}[2]{d_H\left({#1},{#2}\right)}  
\newcommand{\lone}[1]{\norm{#1}_1}  
\newcommand{\ltwo}[1]{\norm{#1}_2}  
\newcommand{\linf}[1]{\norm{#1}_\infty}  
\newcommand{\fnorm}[1]{\ltwo{#1}} 
\newcommand{\opnorm}[1]{\norm{#1}_{op}} 
\newcommand{\enorm}{\|\,\cdot\,\|} 
\newcommand{\expo}[1]{\exp\left(#1\right)} 
\newcommand{\set}[1]{\left\{ #1 \right\}} 
\newcommand{\parent}[1]{\left( #1 \right)} 
\newcommand{\babs}[1]{\bigl\lvert #1 \bigr\rvert} 
\newcommand{\ubar}[1]{\underaccent{\bar}{#1}} 
\DeclareFontFamily{U}{skulls}{}
\DeclareFontShape{U}{skulls}{m}{n}{ <-> skull }{}
\theoremstyle{plain}  
	\newtheorem{theorem}{Theorem}
	\newtheorem{proposition}[theorem]{Proposition}
	\newtheorem{lemma}[theorem]{Lemma}
	\newtheorem{corollary}[theorem]{Corollary}
 \newtheorem{conjecture}[theorem]{Conjecture}
\theoremstyle{definition}
    \newenvironment{model}[1]{%
  \modelinner
}{\endmodelinner}
		\newenvironment{example*}{\exampleEnv}{\endexampleEnv}
		\newenvironment{remark}  
			{\remarkEnv}
			{\endremarkEnv}
		\newenvironment{remark*}{\remarkEnv}{\endremarkEnv}
		\newenvironment{direction*}{\directionEnv}{\enddirectionEnv}
			\newenvironment{solution}
				{\pushQED{\qed}\solutionEnv}
				{\popQED\endsolutionEnv}
			\newenvironment{solution*}{\solutionEnv}{\endsolutionEnv}
			\newenvironment{solution*}{}{}
		\newtheorem*{comment}{\normalfont\emph{Comentário}}
\numberwithin{theorem}{section}
\def\lc{\left\lceil}   
\def\rc{\right\rceil}
\numberwithin{equation}{section}
\newcommand{\lss}{\hat{\pi}_{\mathrm{LSS}}}
\newcommand{\lssc}{\hat{\pi}_{\mathrm{LSS-C}}}
\newcommand{\mle}{\hat{\pi}_{\mathrm{MLE}}}
\newcommand{\tpi}{\pi^{\star}}
\newcommand{\hpi}{\hat{\pi}}
\newcommand{\gdpi}{\hat{\pi}_{GD}}
\newcommand{\gaug}{G^{\text{aug}}}
\newcommand{\lssm}{\hamdist{\lss}{\tpi}}
\newcommand{\lsscm}{\hamdist{\lssc}{\tpi}}
\newcommand{\mlem}{\hamdist{\mle}{\tpi}}
\newcommand{\gm}{\hamdist{\hpi}{\tpi}}
\newcommand{\nnoise}{\tilde{Z}}
\newcommand{\noise}{Z}
\newcommand{\ipos}{X}
\newcommand{\dpos}{\Delta}
\newcommand{\idist}{\mathcal{P}}
\newcommand{\ndist}{\mathcal{Q}}
\newcommand{\bluenew}[1]{#1}
\colorlet{blue}{black}
\newcommand{\modlds}{\Cref{mod:low_dim}+S }
\newcommand{\bmodlds}{\textbf{\modlds}}
\begin{document}

\begin{titlepage}

\title{Geometric planted matchings beyond the Gaussian model}
{\color{blue}
\author[1]{Lucas R. Schwengber \thanks{lucas.schwengber@berkeley.edu, corresponding author. Work done as a student at IMPA.}}
\author[2]{Roberto I. Oliveira \thanks{rimfo@impa.br. Work supported by FAPERJ grant E-26/200.485/2023 (Cientista do Nosso Estado) and CNPq grant 305765/2023-0. The authors express no conflicts of interest.}}
\affil[1]{UC Berkeley, Department of Statistics, Berkeley, U.S.}
}
\bluenew{\affil[2]{IMPA, Rio de Janeiro, Brazil}}

\date{2026}

\maketitle

\subsection*{Abstract}

We consider the problem of recovering an unknown matching between a set of $n$ randomly placed points in $\R^d$ and random perturbations of these points. This can be seen as a model for particle tracking and more generally, entity resolution. We use matchings in random geometric graphs to derive minimax lower bounds for this problem that hold under great generality. Using these results we show that \bluenew{for a fixed $d$, as long as the noise distribution has finite $d$-th moment, and both initial positions and noise have bounded continuous densities, the minimax rate for the problem scales as $\Theta(n^2\sigma^d \wedge n)$. Under the stronger assumptions that the tail of the noise is sub-Gaussian, we show that} the order of the number of mistakes made by an estimator that minimizes the sum of squared Euclidean distances is minimax optimal when $d$ is fixed and is optimal up to $n^{o(1)}$ factors when $d = o(\log n)$. In the high-dimensional regime we consider a setup where both initial positions and perturbations have independent sub-Gaussian coordinates. In this setup we give sufficient conditions under which the same estimator makes no mistakes with high probability. We prove an analogous result for an adapted version of this estimator that incorporates information on the covariance matrix of the perturbations.

\end{titlepage}

\cleardoublepage

\section{Introduction}
\label{sec:intro}

We consider the geometric planted matching model {\color{blue} studied} by D. Kunisky and J. Niles-Weed in \cite{geo_match_2022}. Given $n, d \in \N$, we generate a sequence $X_1, \dots, X_n$ of independent and identically distributed (i.i.d.) random positions in $\R^d$ and independently a sequence of i.i.d. \hspace{-0.5em} random perturbations $\noise_1, \dots, \noise_n$, from which we derive,
\begin{equation}
\label{eq:final_pos}
    Y_i = X_{\tpi(i)} + Z_i, ~ i \in [n],
\end{equation} a sequence of perturbed versions of the original points with the label information hidden by an unknown random permutation $\tpi \sim \text{Unif}(S_n)$.
The goal is to recover $\tpi$ given direct observation of $X_1, \dots, X_n$ and $Y_1, \dots, Y_n$. 

We can interpret $\{X_1, \dots, X_n\}$ and $\{Y_1, \dots, Y_n\}$ as being snapshots at different times of the positions of $n$ particles performing some form of random motion in $\R^d$. Based on these snapshots we would like to track each particle's trajectory. This is a natural problem that arises in the life sciences \cite{particle_tracking_shen2017single, particle_tracking_chenouard2014objective,jaqaman2008robust} in the context of analyzing single-cell dynamics, and in the physical sciences \cite{la2001fluid} in problems involving fluid dynamics. This model also makes sense for growing $d$. For instance, if we imagine that the $X_i$'s are high-dimensional feature vectors, and the $Y_i$'s are noisy versions of the $X_i$'s, then estimating $\pi^*$ is a toy model for {\em entity resolution} or {\em deduplication} \cite{christen2012data}.

Under the assumption that $X_i \sim \mathcal{N}(0,I_d)$ and $Z_i \sim \mathcal{N}(0,\sigma^2 I_d)$, \cite{geo_match_2022} established sufficient and necessary conditions on $\sigma^2$ for the number of mistakes made by the maximum likelihood estimator (MLE) to be $0$ or $o(n)$ with high-probability for different regimes of $d$. Under the same Gaussian model, \cite{wang2022random} showed that some of these conditions are information-theoretical optimal. In \cite{collier2016minimax} and \cite{chen2022one} information-theoretical limits were also established under different models for database matching with Gaussian assumptions. Another line of research \cite{planted_match_exact_moharrami} considers the problem of recovering planted matchings in weighted bipartite graphs without a latent geometric structure. In that case, more general results are available \cite{planted_match_exact_moharrami, rec_threshold_sparse_semerjian, plant_sharp_thr_ding}.

In this work we consider the geometric planted matching problem following \eqref{eq:final_pos} for broader classes of distributions for $X_i$ and $Z_i$. In this more general setting, we obtain a general lower bound on the expected number of mistakes made by any estimator. Our proof is based on a simple argument involving matchings in random geometric graphs. The same strategy can also be used to establish negative results similar to the ones in \cite{geo_match_2022} and \cite{wang2022random}. We showcase some consequences for the regime when $d = \Theta(1)$ and $d = o(\log n)$ providing natural scenarios where it gives optimal results up to constant factors in the first regime, and up to $n^{o(1)}$ factors in the latter one.

We also obtain results in high-dimensional settings when both the initial positions and noise vectors have anisotropic sub-Gaussian distributions. This is motivated by the connection with data deduplication, where non-isotropy seems natural. Our main finding is that there seems to be natural ``effective dimension''~ and ``effective SNR''~ parameters that govern the matching problem in this setting, much like the true dimension determines the error rate for isotropic Gaussians \cite{geo_match_2022}.

{\color{blue}
Taken together our results indicate that the geometric planted matching problem exhibits a key difference relative to its non-geometric counterpart studied in \cite{plant_sharp_thr_ding,rec_threshold_sparse_semerjian}. Whereas in the non-geometric case, the MLE or likelihood-based variants are the default choices for positive results, in the geometric setting, there exist statistically reasonable estimators which always use the same geometric information about the initial and final positions, regardless of other specific details of the model. This key difference can only be seen when one goes beyond Gaussian assumptions, as for the Gaussian model the MLE coincides with an estimator that relies only on purely geometric information.
Our work makes progress on establishing under what general assumptions on the model, significant performance guarantees for such distribution-agnostic estimators can be established.
}

\subsection{Main results}
\label{sec:main_results}

In what follows, an {\em estimator} $\hat{\pi}$ is a map that takes as input two sequences of $n$ points in $\R^d$, $X_1, \dots, X_n$ and $Y_1, \dots, Y_n$, and outputs a permutation $\hat{\pi} \in S_n$. We measure the performance of such estimator $\hat{\pi}$ using the Hamming distance with respect to the true planted matching $\tpi$:
\begin{equation}
\label{eq:hamm_dist}
    \gm = \sum_{j=1}^n \indic{\hpi(j) \neq \tpi(j)}.
\end{equation}
As in previous works \cite{inference_particle_tracking, planted_match_exact_moharrami, rec_threshold_sparse_semerjian, plant_sharp_thr_ding, geo_match_2022, wang2022random} we want to understand the behaviour of the above quantity and its expectation for large $n$. More concretely, we would like to establish upper bounds on the expected number of mistakes made by a certain estimator (positive results), and lower bounds on the same quantity that hold for some, or for any, estimator (negative results).

\noindent For our positive results, we focus on an estimator that solves: 
\begin{equation}
\begin{aligned}
\label{eq:standard_mle}
    \lss &= \underset{\pi \in S_n}{\text{arg min}} \sum_{j=1}^n \ltwo{X_{\pi(j)}-Y_j}^2 \\
    &= \underset{\pi \in S_n}{\text{arg max}} \sum_{j=1}^n \langle X_{\pi(j)}, Y_j \rangle.
\end{aligned}
\end{equation}
{\color{blue}
Under the Gaussian model from \cite{geo_match_2022} this coincides with the MLE. We note that this fact only depends on the distribution of the noise, so one should expect that its behavior should not depend significantly on changes in the distribution of initial positions. It is less clear, however, how its behavior changes when using it with non-Gaussian noise. A main theme in the present work is investigating this question.}
Following \cite{collier2016minimax} we will refer to this estimator as the \textit{Least Sum of Squares Estimator} (LSS).

This estimator is distribution agnostic and can also be evaluated efficiently using the Hungarian method \cite{hungarian_method_kuhn} since it is an instance of a linear assignment problem. It appeared in previous theoretical analysis \cite{geo_match_2022, wang2022random, chen2022one, collier2016minimax} under Gaussian assumptions and in practical applications \cite{jaqaman2008robust}. Not directly related to this work, there exists also an extensive literature on understanding the asymptotic behaviour of so-called Euclidean Bipartite Matching Problems \cite{sicuro2017euclidean, talagrand2018matching}, which have the same structure as the random optimization problem in \eqref{eq:standard_mle}, but the $Y_i$'s are assumed to be an independent copy of the $X_i$'s.

\subsubsection{Low-dimension}
Our first two results are especially informative when $d = o(\log n)$. This includes the case where $d$ is fixed, which is the natural setting for the problem of particle tracking when the number of particles being tracked is very large \cite{inference_particle_tracking}.

\begin{model}{LD}
\label{mod:low_dim}
We generate a model following \eqref{eq:final_pos} with the assumption that:
\begin{enumerate}[(a)]
    \item \label{cond:i_pos_low_dim} $X_1, \dots, X_n \iid \idist$ where $\idist$ is a probability distribution on $\R^d$ such that:
    \begin{enumerate}[(i)]
        \item $\idist(\{x \in \R^d; ~ \norm{x} < R_d \}) \geq \gamma$ for some norm $\enorm$ and $R_d, \gamma > 0$.
        \item $\idist$ has a density $f_\idist$, with respect to Lebesgue measure in $\R^d$, such that:
        \begin{equation}
            \frac{\esssup\limits_{\norm{x} < R_d} f_\idist(x)}{\essinf\limits_{\norm{x} < R_d} f_\idist(x)} \leq e^{\beta d},
        \end{equation}
        for some constant $\beta \geq \log(2)$.
    \end{enumerate}
    \item \label{cond:noise_low_dim} $\noise_1, \dots, \noise_n = \sigma \nnoise_1, \dots, \sigma \nnoise_n$, where $\sigma > 0$ is the noise level and $\nnoise_1, \dots, \nnoise_n \iid \ndist$ are random perturbation directions. We assume $\ndist$ is a distribution in $\R^d$ such that:
   \begin{equation}
            \sup_{\norm{v} < R_d}\norm{\ndist(v)\otimes \ndist(-v) - \ndist \otimes \ndist}_{TV} \leq 1 - e^{-\beta d},
        \end{equation}
    where $\otimes$ denotes the product measure, $\ndist(v)$ is the law of $\tilde{Z}_1 + v$, and $\enorm, R_d$ and $\beta$ are the same as in assumption \ref{cond:i_pos_low_dim}.
\end{enumerate}
\end{model}

Intuitively speaking, assumption (a) says that there is a ball carrying a positive fraction of the mass of $\idist$ where the density $f_{\idist}$ does not vary too much. Assumption (b) is expected to be satisfied for instance when $\ndist$ has a density with a significant overlap with bounded translations of itself. For instance, these assumptions naturally hold for $\idist=\ndist=\mathcal{N}(0,I_d)$ with $\|\,\cdot\,\| = \ltwo{\, \cdot \,}$, $R_d=\sqrt{2d}$ and $\beta, \gamma>0$ which are independent of $d$ and $n$. But they also hold if we only assume that $\idist$ and $\ndist$ have bounded continuous densities (see \S \ref{subsec:fix_dim_app} below).

Under the above assumptions, we have following explicit minimax lower bound:
\begin{theorem}[Minimax lower bound (proof in \S \ref{sub:minimaxlowerbound})]
\label{thm:lb_low_d}
Under \Cref{mod:low_dim}, for all $n \geq 3$,
\begin{equation}
\label{eq:inf_lb}
       \inf_{\hpi} \Ex{\gm} \geq \frac{\gamma^2}{32} e^{-7\beta d} \left( (n^2 \sigma^d) \wedge n \right),
\end{equation}
where the infimum above is taken over all estimators $\hpi$.
\end{theorem}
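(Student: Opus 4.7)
\medskip
\noindent\textbf{Proof plan.}
The strategy combines a Le Cam two-point argument on transpositions with a matching in the random geometric graph of ``confusable pairs'' of $X$-indices. Set $B=\{x:\norm{x}<R_d\}$ and, for $a\ne b$, $E_{ab}=\{X_a,X_b\in B,\ \norm{X_a-X_b}<\sigma R_d\}$. For $\pi\in S_n$, let $\tau_{ab}(\pi)$ denote the involution of $S_n$ obtained by swapping the preimages of $a$ and $b$ in $\pi$. I would compare the hypotheses $\tpi=\pi$ and $\tpi=\tau_{ab}(\pi)$: conditional on $(X_1,\dots,X_n)$ the two data laws differ only in the pair $(Y_{\pi^{-1}(a)},Y_{\pi^{-1}(b)})$, which after translation is distributed as $\sigma\ndist\otimes\sigma\ndist$ under $\pi$ and as $\sigma\ndist(-v)\otimes\sigma\ndist(v)$ under $\tau_{ab}(\pi)$, with $v=(X_a-X_b)/\sigma$. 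Assumption~(b) of \Cref{mod:low_dim} then gives conditional $\mathrm{TV}\le 1-e^{-\beta d}$ on $E_{ab}$, so averaging Le Cam's inequality over $\tpi\sim\mathrm{Unif}(S_n)$ (using that $\tau_{ab}$ preserves the uniform measure) yields
\[
\Pr\!\left[\hpi(\tpi^{-1}(a))\ne a\text{ or }\hpi(\tpi^{-1}(b))\ne b\,\middle|\,X\right]\;\ge\;\tfrac{1}{2}e^{-\beta d}\mathbf{1}\{E_{ab}\}.
\]

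Next I would estimate $\Pr[E_{ab}]$ using assumption~(a): from $\gamma\le\idist(B)\le|B|\cdot\esssup_B f_\idist$ together with the ratio bound, $\essinf_B f_\idist\ge\gamma e^{-\beta d}/|B|$. A Fubini estimate on $\int_{B\times B}\mathbf{1}\{\norm{x-y}<\sigma R_d\}\,dx\,dy$ --- restricting $x$ to the shrunken ball $(1-\sigma)B$, on which $B(x,\sigma R_d)\subseteq B$ --- yields a lower bound $(1-\sigma)^d|B|^2\sigma^d$, and using $\beta\ge\log 2$ (so $(1-\sigma)^d\ge e^{-\beta d}$ for $\sigma\le 1/2$; the case $\sigma>1/2$ is absorbed into the dense regime below) produces $\Pr[E_{ab}]\gtrsim\gamma^2 e^{-3\beta d}\sigma^d$. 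To aggregate, let $G$ be the random graph on $[n]$ with edges $\{(a,b):E_{ab}\}$ and $\mathcal{M}$ a maximum matching. Since the edges of $\mathcal{M}$ are vertex-disjoint in $X$-space and $\tpi$ is a bijection, the induced pairs are vertex-disjoint in $Y$-space, so mistakes add up: $\gm\ge\sum_{(a,b)\in\mathcal{M}}\mathbf{1}[\hpi(\tpi^{-1}(a))\ne a\text{ or }\hpi(\tpi^{-1}(b))\ne b]$. Taking expectations, $\Ex{\gm}\ge\tfrac{1}{2}e^{-\beta d}\Ex{|\mathcal{M}|}$, reducing the theorem to the matching lemma $\Ex{|\mathcal{M}|}\gtrsim\min(n,\binom{n}{2}\Pr[E_{ab}])$; combining with the good-pair probability then produces the $(n^2\sigma^d)\wedge n$ scaling, and the $e^{-7\beta d}$ prefactor absorbs the $e^{-\beta d}$ from the TV bound, the $e^{-3\beta d}$ from $\Pr[E_{ab}]$, and the further losses from the Fubini and matching estimates.

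The main technical obstacle is this matching lemma, since the edges of $G$ are not independent: they share the positions $X_i$. I would split into two regimes. When $n\Pr[E_{ab}]\lesssim 1$, a second-moment calculation shows that the expected number of ``cherries'' $\{\{a,b\},\{a,c\}\}$ is dominated by the $\binom{n}{2}\Pr[E_{ab}]$ expected edges (this uses the upper bound $\esssup_B f_\idist\le e^{\beta d}/|B|$ from assumption~(a)), so most edges are already vertex-disjoint and $\Ex{|\mathcal{M}|}$ is of order $n^2\Pr[E_{ab}]$. When $n\Pr[E_{ab}]\gtrsim 1$, the deterministic matching $\{(2k-1,2k)\}_{k\le\lfloor n/2\rfloor}$ already contains $\Omega(n)$ confusable pairs in expectation, so $\Ex{|\mathcal{M}|}=\Omega(n)$.
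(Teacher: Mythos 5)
Your broad strategy tracks the paper's: reduce to a matching in a random geometric graph of nearby initial positions, then use assumption~(b)'s total-variation bound to show that each matched pair contributes a fixed positive probability of a mistake. Your reduction differs from the paper's in an appealing way: rather than running Assouad's lemma over the cube $\{0,1\}^{|M_r|}$ and then converting minimax to Bayes risk via the group-invariance Lemma~\ref{lem:minimax_bayes}, you apply a pairwise Le Cam argument directly against the uniform prior --- exploiting that the transposition $\tau_{ab}$ preserves $\mathrm{Unif}(S_n)$ --- and then sum over the vertex-disjoint edges of a maximum matching conditionally on $X$. Your matching lemma is also proved differently: the paper's Lemma~\ref{lem:matching_exp} packs disjoint balls of radius $r/2$ inside $B$ and counts those containing exactly two sample points, while you count cherries by a first-moment argument using $\esssup_B f_\idist \le e^{\beta d}/|B|$. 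Both routes work in the sparse regime, and your Le Cam reduction arguably buys a cleaner path to the Bayes lower bound.

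The dense regime of your matching lemma has a genuine gap. You claim that when $n\Pr{E_{ab}}\gtrsim 1$ the deterministic matching $\{(2k-1,2k)\}_{k\le\lfloor n/2\rfloor}$ contains $\Omega(n)$ confusable pairs in expectation, but its expected count is $\lfloor n/2\rfloor\Pr{E_{ab}}$, which is $\Omega(n)$ only when $\Pr{E_{ab}}=\Omega(1)$. If, say, $\Pr{E_{ab}}\asymp n^{-1/2}$ (so $n\Pr{E_{ab}}\to\infty$), the deterministic matching gives only $\Theta(\sqrt n)$, whereas the theorem requires a lower bound of order $n$ there since $n^2\sigma^d\wedge n=n$. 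The fix is the radius-monotonicity step from the paper's proof of Lemma~\ref{lem:matching_exp}: the matching number is nondecreasing in the confusability radius, so shrink the threshold $\sigma R_d$ to a critical radius $l$ at which the graph lands in your sparse regime (roughly $n\,(l/R_d)^d\asymp e^{-O(\beta d)}$), run your cherry count at radius $l$ to obtain a matching of size $\gtrsim e^{-O(\beta d)}n$, and then use that enlarging the radius can only grow the matching. (Relatedly, once you condition on $X_a$ and use the density upper bound to control $\Pr{E_{ab}\cap E_{ac}}$, your sparse-regime threshold is really $n\Pr{E_{ab}}\lesssim e^{-O(\beta d)}$ rather than simply $\lesssim 1$; this is compatible with the theorem's $e^{-7\beta d}$ prefactor but should be stated.) With the monotonicity correction your argument goes through.
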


The proof of this result is based on a natural argument that uses matchings in the random geometric graph $G(n, r, d, \enorm) = G(\{X_1, \dots, X_n\}, r, d, \enorm)$ constructed by joining pairs of initial positions which are within a distance $r$ of each other with respect to norm $\enorm$. As a main step in this proof we also show the following result of independent interest:
\begin{lemma}[Largest matching size lower bound (proof in \S \ref{sec:largest_matching_rgg})]
\label{lem:matching_exp}
    Let $X_1, \dots, X_n \iid \idist$, set $M_r$ to be the largest matching in $G(n, r, d, \enorm)$ and suppose $\idist$, $\enorm, R_d, \gamma$ and $\beta$ follow assumption \ref{cond:i_pos_low_dim} from \Cref{mod:low_dim}. Then for all $n \geq 3$ and $r > 0$,
   \begin{equation}
    \Ex{|M_r|} \geq \frac{\gamma^2}{16} e^{-6\beta d} \left( \left(n^2 \left( \frac{r}{R_d} \right)^d\right) \wedge n \right).
\end{equation}
\end{lemma}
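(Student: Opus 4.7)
The plan is to build a matching out of ``isolated edges'' of the random geometric graph $G(n, r, d, \enorm)$, and to handle the regime where such edges are too few by a radius-shrinking reduction. From assumption \ref{cond:i_pos_low_dim} combined with $2^d \leq e^{\beta d}$ (since $\beta \geq \log 2$), I will derive that $f_\idist$ on $B_{R_d} := \{x : \norm{x} < R_d\}$ satisfies $\inf_{B_{R_d}} f_\idist \geq \gamma e^{-\beta d}/\mathrm{vol}(B_{R_d})$, $\sup_{B_{R_d}} f_\idist \leq e^{\beta d}/\mathrm{vol}(B_{R_d})$, and $\idist(B_{R_d/2}) \geq \gamma e^{-2\beta d}$. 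Set $\mu_0 := 2 e^{\beta d}(r/R_d)^d$. For $r \leq R_d/4$ and $X_1 \in B_{R_d/2}$, every ball $B(X_i, r)$ arising in the argument sits inside $B_{R_d}$, which should give
\[
P\!\bigl(X_1 \in B_{R_d/2},\, \norm{X_1-X_2} < r\bigr) \geq \gamma^2 e^{-3\beta d}(r/R_d)^d \quad\text{and}\quad \idist\bigl(B(X_1,r)\cup B(X_2,r)\bigr) \leq \mu_0.
\]

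With these bounds, I will call $(i, j)$ a \emph{good isolated edge} when $X_i \in B_{R_d/2}$, $\norm{X_i - X_j} < r$, and no other $X_k$ lies in $B(X_i, r) \cup B(X_j, r)$. Isolated edges are automatically vertex-disjoint, so $|M_r|$ dominates their count. Conditioning on $(X_i, X_j)$ and using independence of the remaining $n - 2$ points, the probability of being a good isolated edge is at least $\gamma^2 e^{-3\beta d}(r/R_d)^d (1-\mu_0)^{n-2}$. Combining with $\binom{n}{2} \geq n^2/3$ for $n \geq 3$,
\[
\Ex{|M_r|} \geq \tfrac{1}{3}\, \gamma^2 e^{-3\beta d}(r/R_d)^d (1 - \mu_0)^{n-2}\, n^2.
\]

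Setting $\alpha := n(r/R_d)^d$, I then case-split. In the sparse regime $\alpha \leq (4 e^{\beta d})^{-1}$ one has $\mu_0(n-2) \leq 1/2$ and $(1-\mu_0)^{n-2} \geq e^{-1}$, so the bound gives order $\gamma^2 e^{-3\beta d} n^2 (r/R_d)^d$, comfortably larger than the target $(\gamma^2/16) e^{-6\beta d} n^2(r/R_d)^d$ since $e^{3\beta d} \geq 8$. In the dense regime $\alpha > (4 e^{\beta d})^{-1}$, I set $r' := R_d (4 n e^{\beta d})^{-1/d} < r$, use $|M_r| \geq |M_{r'}|$, and apply the sparse bound to $r'$ to obtain $\Ex{|M_r|} \gtrsim \gamma^2 e^{-4\beta d} n$, again beating $(\gamma^2/16) e^{-6\beta d} n$ whenever $\beta d \geq \log 2$. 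The hardest part will be the bookkeeping: the reduction requires $r' \leq R_d/4$ for the sparse argument to apply, which forces $n \gtrsim 2^{d-2}$; for the leftover very-small-$n$ regime, the one-pair bound $\Ex{|M_r|} \geq P(\norm{X_1 - X_2} < r) \geq \gamma^2 e^{-3\beta d}\min((r/R_d)^d,\, 4^{-d})$ already dominates the target.
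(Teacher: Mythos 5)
Your argument is correct, and it takes a genuinely different route from the paper's. The paper builds a matching out of a deterministic packing: it tiles the ball $B_{\enorm}(0,R_d)$ with roughly $(R_d/2r)^d$ disjoint sub-balls of radius $r/2$, counts how many of these contain exactly two sample points, and observes that each such sub-ball yields one edge of a matching; vertex-disjointness is automatic because the sub-balls are disjoint a priori, so a single binomial calculation at the threshold radius $l \asymp R_d e^{-\beta}(n-1)^{-1/d}$, extended by monotonicity for $r>l$, closes the argument. You instead count \emph{isolated edges} of $G(n,r,d,\enorm)$ directly, which is the classical Penrose-style approach the paper explicitly mentions (and declines to use) in the remark following \Cref{lem:matching_exp}; the value of your version is that you carry it out nonasymptotically and in $d$ that may grow. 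Both routes deliver the same order, and both need a case split on the radius; the trade-off is that your proof has an extra $(1-\mu_0)^{n-2}$ isolation factor and a three-way split (sparse / dense via radius shrinking / small-$n$ via a one-pair bound), whereas the paper's packing sidesteps the isolation factor entirely. Two small bookkeeping points you should make explicit when writing this up: (i) the direct sparse argument \emph{also} needs $r \leq R_d/4$, not just the reduction to $r'$ — this holds automatically once $n \geq 4^{d-1}e^{-\beta d}$, and the leftover sparse-with-large-$r$ cases sit inside $n < 4^{d-1}e^{-\beta d}$ and are absorbed by the one-pair bound just like the dense small-$n$ cases; (ii) the exact threshold is $n \geq 4^{d-1}e^{-\beta d}$, of which $n\geq 2^{d-2}$ is only a sufficient condition (via $e^{\beta}\geq 2$), so state the exact one so the trichotomy is airtight.
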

Given \Cref{lem:matching_exp}, the idea of the proof of Theorem \ref{thm:lb_low_d} is that if two initial positions are sufficiently close to each other they are likely to be confused once noise is added. Therefore, given a suitable $r$, a large matching in $G(n,r,d,\enorm)$ implies a similarly large amount of mistakes when recovering $\hpi$. When $d$ is fixed, it suffices to consider $r = \Theta(\sigma)$, as illustrated in \Cref{fig:rgg_errors}. In this case the order $(n^2 \sigma^d) \wedge n$ appearing in \Cref{thm:lb_low_d} has a very natural geometrical interpretation: it is the order of the size of the largest matching of initial positions within distance $\Theta(\sigma)$.

\begin{figure}[H]
    \centering
    \includegraphics[scale=0.26]{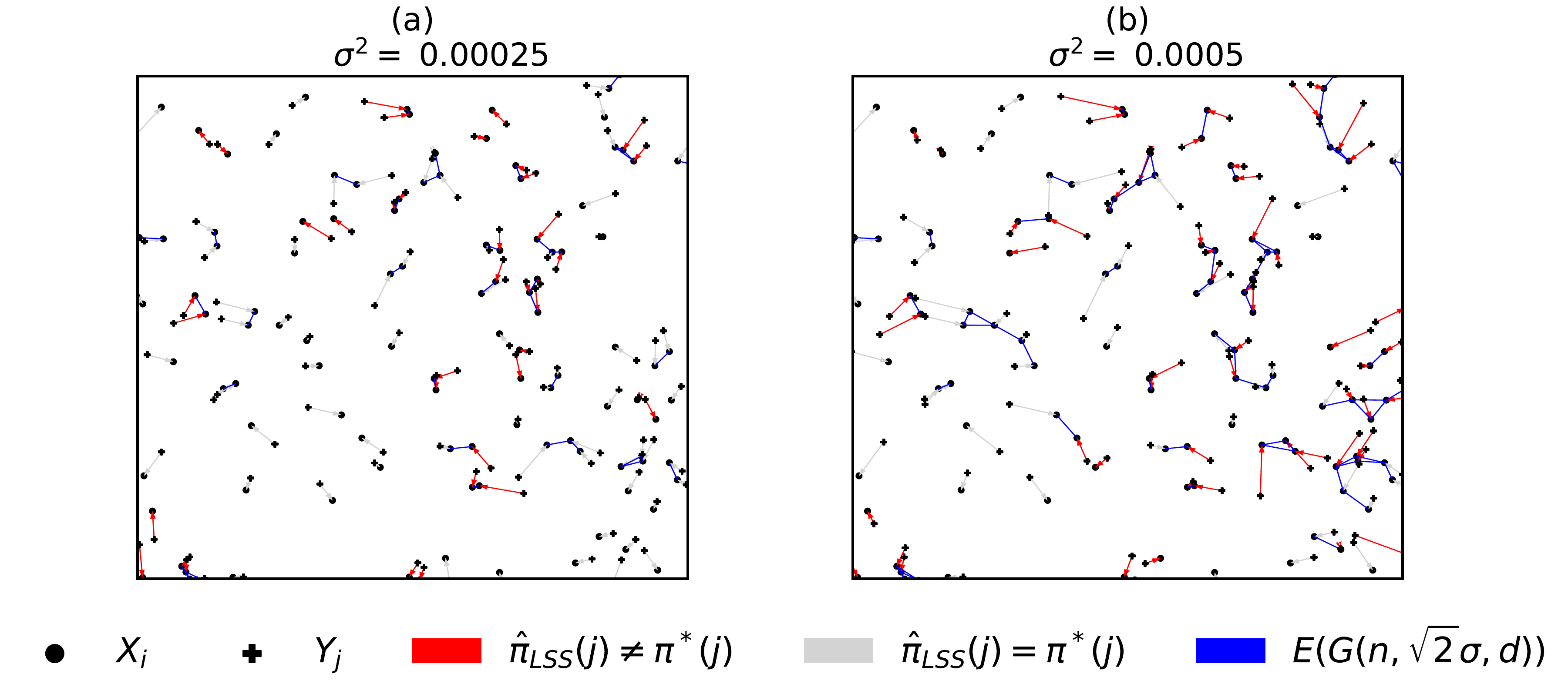}
    \caption{A small window of a simulation of the performance of $\lss$ on a sample following \eqref{eq:final_pos} of total size $n=3000$ with $X_1 \sim \mathcal{N}(0,I_d)$, $Z_1 \sim \mathcal{N}(0,\sigma^2 I_d)$ for two different values of $\sigma^2$. Arrows indicate to which initial position $X_i$ a given $Y_j$ was associated by $\lss$. Red arrows indicate that the estimated match for the given point was incorrect, while grey arrows indicate that the estimated match was correct. Blue lines between pairs of initial positions indicate that they are within a distance of $r = \sqrt{2}\sigma$ of each other. Note that most incorrect matches involve initial positions that share an edge on $G(n,r,d) = G(n,r,d,\ltwo{\,\cdot\,})$. Our strategy to prove our lower bounds takes inspiration from this observation. The code to generate this image can be found at \href{https://github.com/Lucas-Schwengber/particle_tracking}{https://github.com/Lucas-Schwengber/particle\_tracking}.}
    \label{fig:rgg_errors}
\end{figure}

Although to the best of our knowledge understanding the size of matchings in random geometric graphs has not received direct attention in previous works, \Cref{lem:matching_exp} is not significantly far from what is already well-known in the literature. For instance, if $d$ and $\idist$ are fixed one can use general results of the classical monograph by Penrose \cite[Propositions 3.2 and 3.3]{rggpenrose} to count isolated edges in a random geometric graph, giving a lower bound on the size of its largest matching which is sharp up to constant factors. For our purposes, the main advantage of Lemma \ref{lem:matching_exp} is that it is general, nonasymptotic and can be made to work in growing dimensions. Moreover, its proof is simple and constructive.

\begin{remark}
Our approach to establish lower bounds is different from what was done in \cite{geo_match_2022} where the authors look at matchings in a different random graph, which is built by adding edges between the endpoints of augmenting $2$-cycles (see \Cref{sec:lss_lb} for a definition). There is a direct connection between the two approaches, which we outline in \Cref{a:high_prob_lb}. In particular, when $d = o(\log n)$ or $d$ is fixed, our techniques are able to recover the lower bounds from \cite{geo_match_2022} up to factors of order $n^{o(1)}$, while working under greater generality (see \Cref{thm:hp_lss_lb}).
\end{remark}

\begin{remark} The proof strategy of our negative results also differs from the one in \cite{wang2022random} where the authors either count augmenting $2$-cycles as in \cite{geo_match_2022} or use a mutual information argument. But one can also prove a version of \Cref{thm:lb_low_d} that gives a lower bound in probability for the performance of any estimator, thus allowing us to derive negative results that complement and extend the negative results from \cite{wang2022random} (see \Cref{cor:hp_lbs} in \Cref{a:high_prob_lb}) when $d$ is fixed and when $d = o(\log n)$. 
\end{remark}

\begin{remark}
    We note that, similar to this work, \cite{chen2022one} and \cite{collier2016minimax} also established information-theoretical limits using the distances between uncorrupted data points. However their results are stated in terms of the \textit{minimum distance} between such points. An interesting feature of these results is that they hold \textit{conditionally} on the set of uncorrupted data points. As a main step in our proof of \Cref{thm:lb_low_d} we also derive a result that holds conditionally on the set of original positions $X_i$ (see \Cref{lem:gen_inf_theo_lb}). However our results are in terms of $|M_r|$, the size of the largest matching in $G(n,r,d,\enorm)$ as defined previously.
\end{remark}

\begin{remark}
    Although the negative results established in this paper concern the case when $d = o(\log n)$, we believe our strategy can also lead to non-trivial results when $d = \Omega(\log n)$. In particular, in \Cref{a:log_reg} we showcase how they might be used to partially recover the lower bounds from \cite{geo_match_2022} in the regime where $d = \Theta(\log n)$. 
\end{remark}

Our next result shows that the LSS estimator achieves minimax optimal  rates in a slightly more restrictive setting than \Cref{mod:low_dim}. Specifically, let \bmodlds be \Cref{mod:low_dim} with the additional assumptions that:
\begin{enumerate}[(a)]
\setcounter{enumi}{2}
    \item\label{cond:up_density} $\linf{f_\idist} \leq e^{\beta d}$, for the same $\beta$ as in assumption \ref{cond:i_pos_low_dim}.
    \item\label{cond:sub_gauss} The distribution of the $\nnoise_i$'s is centered and satisfies:
    \begin{equation}
        \snorm{\nnoise_i} \leq K_Q,
    \end{equation}
    where $\snorm{\, \cdot \,}$ denotes the sub-Gaussian norm.
\end{enumerate}
Under these additional assumptions, we have the following upper bound on $\Ex{\lssm}$:
\begin{theorem}[LSS error upper bound (proof in \S \ref{sec:up_bound_ld})]
\label{thm:lss_up_low_d}
Under \modlds\hspace{-0.75em} , for all $n \geq 2$,
\begin{equation}
     \Ex{\lssm} \leq (3K^d n^2 \sigma^d) \wedge n,
\end{equation}
where $K = b_{\mathrm{LD}} K_\ndist^2 e^{2\beta}$ for some absolute constant $b_{\mathrm{LD}} > 0$.
\end{theorem}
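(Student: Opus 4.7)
Since $\lssm \leq n$ always holds, I need only show $\Ex{\lssm} \leq 3K^d n^2\sigma^d$. The strategy is to reduce the error count of $\lss$ to a union bound over ``augmenting 2-cycles'', and then to bound each pairwise probability using the density bound from assumption (c) together with the sub-Gaussian hypothesis (d). By the exchangeability of the model I may condition on $\tpi = \mathrm{id}$, so that $Y_j = X_j + Z_j$ for all $j$. Let $\gaug$ denote the random graph on $[n]$ whose edges are pairs $\{i,j\}$ for which transposing $i$ and $j$ in $\tpi$ strictly decreases the sum of squared distances; expanding the squares, this is equivalent to $\binner{X_j - X_i}{Z_i - Z_j} > \ltwo{X_j - X_i}^2$.

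The first step, deterministic and combinatorial, is to establish an inequality of the form $\lssm \leq c_0\,|E(\gaug)|$ for some absolute constant $c_0$. Granted this, linearity of expectation reduces the task to bounding $\Pr[(1,2) \in \gaug]$. Conditioning on $D := X_2 - X_1$, assumption (d) implies that $\binner{D}{Z_1 - Z_2}$ is sub-Gaussian with variance proxy of order $\sigma^2 K_\ndist^2 \ltwo{D}^2$, giving
$$\Pr[(1,2) \in \gaug \mid D] \leq \exp\!\bigl(-c_1 \ltwo{D}^2 / (\sigma^2 K_\ndist^2)\bigr).$$
By the convolution inequality the density of $D$ satisfies $\linf{f_D} \leq \linf{f_\idist}$, which by assumption (c) is at most $e^{\beta d}$. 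Integrating the sub-Gaussian tail against this bound yields $\Pr[(1,2) \in \gaug] \leq C_0^d\,\linf{f_\idist}\,K_\ndist^d\,\sigma^d$. Multiplying by $\binom{n}{2}$ and absorbing the overhead from the combinatorial step and the remaining exponential-in-$d$ factors into the constant $b_{\mathrm{LD}}$ produces the stated bound.

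The main obstacle is the combinatorial inequality $\lssm \leq c_0\,|E(\gaug)|$: an improving cycle of $\lss \circ (\tpi)^{-1}$ of length $k \geq 3$ need not contain any individually improving 2-swap, so the statement is not immediate. Here I would exploit the quadratic form of the cost. Writing $V_l := X_{a_{l+1}} - X_{a_l}$ along a cycle $(a_1, \dots, a_k)$, the LSS-optimality against the permutation obtained by replacing this cycle with the identity yields $\sum_l \ltwo{V_l}^2 \leq 2 \sum_l \binner{V_l}{Z_{a_l}}$. Expanding the 2-swap cost differences for each of the $\binom{k}{2}$ pairs in the cycle and summing produces a telescoping identity from which an averaging argument should extract a constant fraction of the pairs as edges of $\gaug$, yielding $\Omega(k)$ edges and hence the desired inequality.
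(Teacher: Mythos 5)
The reduction to augmenting $2$-cycles has a genuine gap, and it is exactly the step you yourself flag as the main obstacle. The proposed deterministic inequality $\lssm \leq c_0\,|E(\gaug)|$ is false: it would assert that if no $2$-swap improves the identity assignment then the identity makes $O(1)$ mistakes, i.e.\ that $2$-swap local optimality controls global optimality for the linear assignment problem, which it does not. Concretely, take $3$ points with cost matrix
\[
\bigl(\|X_i-Y_j\|_2^2\bigr)_{i,j} =
\begin{pmatrix} 5 & 1 & 10 \\ 10 & 5 & 1 \\ 1 & 10 & 5 \end{pmatrix}.
\]
Every $2$-swap has cost $16 > 15$, so $|E(\gaug)| = 0$, yet the $3$-cycle $\pi(1)=3,\pi(2)=1,\pi(3)=2$ has cost $3$, so $\lssm=3$. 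This cyclic cost pattern is realizable with squared Euclidean distances by placing two coaxial equilateral triangles in $\R^3$ with a suitable twist angle and separation. The telescoping argument you sketch cannot rescue this: the cycle certificate $\sum_l \binner{V_l}{Z_{a_l}}$ and the pairwise $2$-swap certificates $\binner{X_{a_i}-X_{a_j}}{Y_{a_i}-Y_{a_j}}$ are algebraically unrelated, and the example shows that an augmenting $k$-cycle may contain zero augmenting pairs. (Note also that the paper does use $\gaug$, but only in the \emph{lower}-bound direction via \Cref{lem:gaug_lb}, where $\lssm \ge |M^*|$ for a matching $M^*\subseteq E(\gaug)$; your upper bound runs the wrong way.)

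The paper avoids the reduction entirely. Starting from \eqref{eq:det_bound_lss}, which bounds $\lssm$ by the total weighted mass of augmenting $t$-cycles over \emph{all} $t\geq 2$, it proves in \Cref{lem:aug_up_bound_fix_d} that each $t$-cycle is augmenting with probability at most $2(K\sigma)^{d(t-1)}$. The exponent grows in $t$, so summing $n^t \cdot 2(K\sigma)^{d(t-1)}$ over $t\geq 2$ gives a geometric series $2K^d n^2\sigma^d \sum_{t\geq 0}(K^d n\sigma^d)^t$, which is at most $3K^d n^2\sigma^d$ when $K^d n\sigma^d \le 1/3$; when $K^d n \sigma^d > 1/3$ the trivial bound $\lssm \le n$ takes over, matching $(3K^dn^2\sigma^d)\wedge n$. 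Your sub-Gaussian tail plus density-convolution computation is essentially the $t=2$ case of \Cref{lem:aug_up_bound_fix_d} and is sound; what is missing is extending that estimate to arbitrary $t$ (the paper's argument shows the density of the $(t-1)d$-dimensional difference vector $\ubar{\Delta}_{(t-1)}$ is bounded by $\linf{f_\idist}^{t-1}$, which gives the geometric decay) and replacing the false combinatorial claim with the geometric-series summation over cycle lengths.
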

In \Cref{subsec:fix_dim_app} below we show that if $d$ is fixed we can improve our constant $3K^d$ to a constant $\tau_{d, \idist, \ndist}$ which we conjecture to be sharp when $n^2 \sigma^d \ll n$ (see \Cref{fig:error_rate}). This constant depends very mildly on the geometrical properties of $\idist$ and $\ndist$ thus suggesting that when $n^2 \sigma^d = o(n)$, $\Ex{\lssm}$ behaves as in the Gaussian model for a broad class of distributions.

The proof of \Cref{thm:lss_up_low_d} follows a similar strategy as in \cite{geo_match_2022}. Given a $t$-cycle $C_t = (i_1,i_2,\ldots,i_t)$ of distinct elements from $[n]$ we say that it is an \textit{augmenting cycle} (for $\lss$), or just \textit{augmenting}, if:
\begin{equation}
    \sum_{k=1}^t \langle X_{\pi^*(i_k)}, Y_{i_{k+1}} \rangle \geq \sum_{k=1}^t \langle X_{\pi^*(i_k)}, Y_{i_k} \rangle,
\end{equation}
where $i_{t+1} := i_1$. If we assume that $\tpi = \text{Id}$, all cycles of size $2$ or greater on the cycle decomposition of $\lss$ will be mistakes. Moreover, these cycles must be \textit{augmenting cycles}, otherwise they would not appear in the optimal assignment $\lss$. An analogous reasoning also applies when $\tpi \neq \mathrm{Id}$. It follows that we have the following deterministic bound:

\begin{equation}
\label{eq:det_bound_lss}
    \lssm \leq \sum_{t=2}^n \sum_{C_t} t \indic{C_t \text{ is augmenting}},
\end{equation}
 where the inner sum is over all $t$-cycles $C_t$. 

To bound $\Ex{\lssm}$, we  bound the probability that $t$-cycles are augmenting. Similarly to \cite{geo_match_2022} our upper bounds on these probabilities involve bounding the expectation of exponentials of certain quadratic forms on the initial positions. Unlike in the Gaussian setting of \cite{geo_match_2022}, we cannot evaluate these expectations explicitly. Fortunately, the assumptions in \modlds are good enough for calculations to go through.

\bluenew{We note that just like in \cite{geo_match_2022}, the sharpness of our upper bounds, when they apply, rely crucially on the fact that the mass of augmenting $2$-cycles dominate the error of $\lss$ up until the number of errors is linear. We believe outside this regime, understanding the $\lss$ is significantly more challenging. As in \cite{geo_match_2022}, our experiments (see \Cref{fig:error_rate}) suggests that other geometrical aspects of the problem beyond the mass of augmenting $2$-cycles play a role once the regime of linear mistakes starts.}

In \Cref{sec:examples} we show that Theorems \ref{thm:lb_low_d} and \ref{thm:lss_up_low_d} can be combined to obtain optimal, and nearly-optimal, rates for some natural choices of $\idist$ and $\ndist$. One such rate is the Gaussian model studied in \cite{geo_match_2022} in the regime where $d = o(\log n)$, where our bounds are optimal up to $n^{o(1)}$ factors. Another such case is when $\idist$ and $\ndist$ have continuous and bounded densities with $d$ fixed. We additionally consider the cases of $\idist$ uniform or from the Laplace family. 

\subsubsection{High-dimension}

Our next results concern the high-dimensional setting. As mentioned above, this is a natural toy model for \textit{deduplication} and  \textit{entity resolution} between high-dimensional datasets \cite{christen2012data}.

\begin{model}{HD}
\label{mod:noniso_subgauss}
In this setup we let $n \to +\infty$ while making the following assumptions in \eqref{eq:final_pos}:
\begin{enumerate}[(a)]
    \item $d = \omega(\log n)$.
    \item $X_1, \dots, X_n = (\Sigma_X)^{\frac{1}{2}} \tilde{X}_1, \dots, (\Sigma_X)^{\frac{1}{2}} \tilde{X}_n$ and $Z_1, \dots, Z_n = (\Sigma_Z)^{\frac{1}{2}} \nnoise_1, \dots, (\Sigma_Z)^{\frac{1}{2}} \nnoise_n$ where:
    \begin{enumerate}[(i)]
        \item $\tilde{X}_1, \dots, \tilde{X}_n$ are i.i.d. and independent from $\nnoise_1, \dots, \nnoise_n$, which are also i.i.d.
        \item \label{itm:kz} Both $\tilde{X}_1$ and $\nnoise_1$ have centered, unit variance and independent coordinates with:
        \begin{equation}
        \max_{j \in [d]}\snorm{\tilde{X}_1[j]} \leq K_X\text{, } \ \max_{j \in [d]}\snorm{\nnoise_1[j]} \leq K_Z,
        \end{equation}
        for constants $K_X, K_Z > 0$ that do not depend on $n$.
        \item $\Sigma_X = \diago{\sigma_X[1], \dots, \sigma_X[d]}$ and $\Sigma_Z = \diago{\sigma_Z[1], \dots, \sigma_Z[d]}$ are diagonal matrices with strictly positive diagonal entries.
    \end{enumerate}
\end{enumerate}
\end{model}
We leave implicit the dependence on $n$ of the distribution of the $X_i$'s, $Z_i$'s and $Y_i$'s as well as $\Sigma_X$ and $\Sigma_Z$.
Under the above model we have the following result:
\begin{theorem}[Perfect recovery for LSS in high-dimensions (proof in \S \ref{sec:up_bound_hd})]
\label{thm:high_dim_lss} Under \Cref{mod:noniso_subgauss}, suppose $\frac{\tr{\Sigma_Z \Sigma_X}}{\opnorm{ \Sigma_Z \Sigma_X }} = \omega(\log n)$, $\frac{\tr{\Sigma_X}}{\opnorm{\Sigma_X}} = \omega(\log n)$ and let,
\begin{equation}
 \tilde{\sigma}^2_X[j] = \frac{\sigma^2_X[j]}{\sum\limits_{k=1}^d \sigma^2_X[k]}, ~ j \in [d].   
\end{equation}
There exists an absolute constant $b_{\mathrm{HD}} > 0$ such that if
    \begin{equation}
    \label{eq:lss_upper_bound}
        b_{\mathrm{HD}} K^2_Z \leq \log n^{-1} \frac{\sum\limits_{j=1}^d \sigma^2_X[j]}{\left(\sum\limits_{j=1}^d \tilde{\sigma}^2_X[j]\sigma^2_Z[j]\right)}
    \end{equation}
    then $\lssm = 0$ with high probability.  
\end{theorem}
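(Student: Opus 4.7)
The plan is to adapt the augmenting-cycle bound \eqref{eq:det_bound_lss} to the anisotropic sub-Gaussian setting. Without loss of generality assume $\tpi = \mathrm{Id}$, so $Y_i = X_i + Z_i$; it then suffices to show that with high probability no cycle of length $t\geq 2$ is augmenting for $\lss$. Expanding the augmenting condition with $Y_j = X_j + Z_j$ and telescoping the cyclic inner products shows that a $t$-cycle $(i_1,\dots,i_t)$ is augmenting iff
\[
-\sum_{k=1}^{t}\langle D_k, Z_{i_k}\rangle \;\geq\; \tfrac{1}{2}\,S_X,\qquad D_k := X_{i_k} - X_{i_{k-1}},\qquad S_X := \sum_{k=1}^{t}\|D_k\|^{2}.
\]
Writing $S_Z := \sum_{k=1}^{t} D_k^{\top}\Sigma_Z D_k$, and conditioning on the $X_i$, the left-hand side is a sum of $t$ independent linear functionals of the $\nnoise_{i_k}$; by assumption \ref{itm:kz} each is sub-Gaussian with proxy $\lesssim K_Z^{2}\|\Sigma_Z^{1/2}D_k\|^{2}$, so the total is sub-Gaussian with proxy $\lesssim K_Z^{2} S_Z$, giving
\[
\Pr\bigl((i_1,\dots,i_t)\text{ is augmenting}\mid X_1,\dots,X_n\bigr)\;\leq\;\exp\!\left(-\,\frac{c\, S_X^{\,2}}{K_Z^{2}\, S_Z}\right).
\]

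The crux of the proof is to show that on a high-probability event $\Omega$ depending only on the $X_i$ one has, \emph{simultaneously for every $t\geq 2$ and every $t$-cycle of distinct indices},
\[
S_X \;\gtrsim\; t\,\tr{\Sigma_X}\qquad \text{and}\qquad S_Z \;\lesssim\; t\,\tr{\Sigma_X\Sigma_Z}.
\]
Applying Hanson--Wright cycle-by-cycle and union-bounding over $n^{t}$ cycles would give nothing, so instead I would expand
\[
S_X = 2\sum_{k=1}^{t}\|X_{i_k}\|^{2} - 2\sum_{k=1}^{t}\langle X_{i_k}, X_{i_{k-1}}\rangle,
\]
and the analogue for $S_Z$ (with $\Sigma_Z$ inserted), and then control uniformly the $O(n^{2})$ atomic forms $\|X_i\|^{2}$, $X_i^{\top}\Sigma_Z X_i$, $\langle X_i, X_j\rangle$, $\langle X_i, \Sigma_Z X_j\rangle$. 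The hypothesis $\tr{\Sigma_X}/\opnorm{\Sigma_X} = \omega(\log n)$ is exactly what Hanson--Wright requires to yield, with probability $1-n^{-\omega(1)}$, both $\|X_i\|^{2} = (1+o(1))\tr{\Sigma_X}$ and $|\langle X_i, X_j\rangle| = o(\tr{\Sigma_X})$ uniformly in $i\neq j$; the companion hypothesis on $\tr{\Sigma_X\Sigma_Z}/\opnorm{\Sigma_X\Sigma_Z}$ controls the $\Sigma_Z$-weighted versions in the same way. A union bound over the $\binom{n}{2}$ pairs then delivers the cycle-wise bounds on $\Omega$, since the diagonal terms dominate the cross terms in every cycle.

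On $\Omega$, any fixed $t$-cycle is augmenting with probability at most $\exp\bigl(-c\, t\, \tr{\Sigma_X}^{2}/(K_Z^{2}\tr{\Sigma_X\Sigma_Z})\bigr)$, and the hypothesis \eqref{eq:lss_upper_bound} reads exactly $\tr{\Sigma_X}^{2}/\tr{\Sigma_X\Sigma_Z} \geq b_{\mathrm{HD}}\,K_Z^{2}\log n$, so this is at most $n^{-c\, b_{\mathrm{HD}}\, t}$. Union-bounding over the at most $n^{t}$ cycles of length $t$, choosing $b_{\mathrm{HD}}$ a large enough absolute constant, and summing $t\geq 2$ makes the total failure probability $o(1)$; together with $\Pr(\Omega^{c}) = o(1)$ this yields $\lssm = 0$ with high probability. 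The hardest step is the uniform control of $S_X$ and $S_Z$: because cycles share variables, these quantities are strongly correlated across cycles and a naive union bound over cycles is hopeless; the atomic decomposition above is what funnels the exponentially many cycle inequalities through only polynomially many concentration inequalities, and this is precisely where the two effective-dimension hypotheses of the theorem enter.
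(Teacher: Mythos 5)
Your initial setup is correct and matches the paper's: reduce to the event that some $t$-cycle is augmenting, condition on the $X_i$'s, and use sub-Gaussianity of the noise to get the conditional bound $\exp\bigl(-c\,S_X^2/(K_Z^2 S_Z)\bigr)$ (this is exactly the content of \Cref{lem:bound_aug_2_cycle_subgaussian}). Where you go wrong is the claim that ``applying Hanson--Wright cycle-by-cycle and union-bounding over $n^t$ cycles would give nothing.'' That claim is false, and the paper's proof is precisely the cycle-by-cycle route you dismissed. The key quantitative point you missed: for a $t$-cycle, $\sum_{k=1}^t\|D^{1/2}\Xi_k\|_2^2$ can be written as $\langle AU,U\rangle$ where $A=L^{C_t}\otimes D$ has $\tr{A}=2t\,\tr{D}$, $\opnorm{A}\leq 4\opnorm{D}$, and $\fnorm{A}^2\leq 6t\,\opnorm{D}\tr{D}$. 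Feeding this into Hanson--Wright (\Cref{lem:conc_quad_abs}) gives a concentration failure probability of the form $\exp\bigl(-c\,t\cdot\tr{D}/\opnorm{D}\bigr)$, i.e.\ the $t$ from the trace scaling appears \emph{in the exponent}. Since $\tr{\Sigma_X}/\opnorm{\Sigma_X}$ and $\tr{\Sigma_Z\Sigma_X}/\opnorm{\Sigma_Z\Sigma_X}$ are both $\omega(\log n)$, each per-cycle failure probability is $n^{-\omega(t)}$, which beats the $n^t$ cycles of length $t$, and summing over $t\geq 2$ gives $o(1)$. The dependence across cycles is irrelevant because the paper bounds the \emph{first moment} $\Ex{\lssm}\leq\sum_t n^t\,\Pr(\text{a fixed $t$-cycle is augmenting})$, splitting each per-cycle expectation into the concentrated regime plus two Hanson--Wright tail terms; correlation never enters.

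Your proposed atomic decomposition (uniform control of $\|X_i\|^2$, $\langle X_i,X_j\rangle$ and the $\Sigma_Z$-weighted analogues over $O(n^2)$ pairs) is a genuinely different route and could in principle be pushed through: the hypotheses of the theorem do give $n^{-\omega(1)}$ failure probability for each atom via (decoupled) Hanson--Wright, a union bound over $O(n^2)$ atoms is affordable, and on the resulting good event the cycle sums $S_X$ and $S_Z$ are controlled as you describe. But it adds work that the theorem does not require---you would need to verify that the off-diagonal atoms $\langle X_i, X_j\rangle$ are simultaneously $o(\tr{\Sigma_X})$, choose an $\epsilon_n\to 0$ slowly enough, and handle the $\Sigma_Z$-weighted cross terms similarly---all to circumvent a problem that does not exist. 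The main gap in your write-up is therefore not a missing step in the atomic route but the incorrect premise that motivated it; a reader following your plan would take a considerable detour when the per-cycle Hanson--Wright bound (with the crucial factor of $t$ in the exponent) already closes the argument.
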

This theorem establishes a sufficient condition for perfect recovery for an estimator which is agnostic to any information on the distribution of the noise and its covariance structure. It extends the results on the high-dimensional setting from \cite{geo_match_2022}. This condition can be slightly improved if we assume knowledge of $\Sigma_Z$. We consider a modified version of LSS, LSS-C, which incorporates the information from $\Sigma_Z$. It is defined as:

\begin{equation}
\begin{aligned}
\label{eq:lss_c}
    \lssc &:= \underset{\pi \in S_n}{\text{arg min}} \sum_{j=1}^n \ltwo{(\Sigma_Z)^{-\frac{1}{2}}(X_{\pi(j)}-Y_j)}^2 \\ 
    &= \underset{\pi \in S_n}{\text{arg max}} \sum_{j=1}^n \left\langle (\Sigma_Z)^{-1}X_{\pi(j)}, Y_j \right\rangle.
\end{aligned}
\end{equation}

\bluenew{Much in the same way that $\lss$ can be seen as the MLE under isotropic Gaussian noise, $\lssc$ can be seen as the MLE under Gaussian noise with a general covariance structure given by $\Sigma_Z$. Equivalently, the $\lssc$ may also be seen as $\lss$ applied to a transformation $\Sigma_Z^{-\frac{1}{2}}$ of the underlying space which whitens out the covariance structure of the noise.}

Regarding this estimator we have the following sufficient condition for perfect recovery:
\begin{theorem}[Perfect recovery for LSS-C in high-dimensions (proof in \S \ref{sec:up_bound_hd})]
\label{thm:high_dim_lssc}
Under \Cref{mod:noniso_subgauss}, suppose $\frac{\tr{(\Sigma_Z)^{-1}\Sigma_X}}{\opnorm{ (\Sigma_Z)^{-1}\Sigma_X }} = \omega(\log n)$ and let $(\tilde{\sigma}^2_X[j])_{j=1}^d$ and $b_{\mathrm{HD}}$ be as in \Cref{thm:high_dim_lss}. If
    \begin{equation}
    \label{eq:lssc_upper_bound}
    \begin{split}
        b_{\text{HD}} K^2_Z 
        &\leq \log n^{-1} \frac{\sum\limits_{j=1}^d \sigma^2_X[j]}{\left( \sum\limits_{j=1}^d \tilde{\sigma}^2_X[j]  \sigma^{-2}_Z[j] \right)^{-1}}, 
    \end{split}
    \end{equation}
    then $\hamdist{\lssc}{\tpi} = 0$ with high probability.
\end{theorem}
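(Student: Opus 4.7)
The plan is a reduction to \Cref{thm:high_dim_lss} via whitening the noise. Define $X'_i := (\Sigma_Z)^{-1/2} X_i$ and $Y'_i := (\Sigma_Z)^{-1/2} Y_i$ for $i \in [n]$. Since $\Sigma_Z$ is diagonal with strictly positive entries, for every $\pi \in S_n$,
\begin{equation}
\sum_{j=1}^n \ltwo{X'_{\pi(j)} - Y'_j}^2 = \sum_{j=1}^n \ltwo{(\Sigma_Z)^{-1/2}(X_{\pi(j)} - Y_j)}^2,
\end{equation}
so LSS-C from \eqref{eq:lss_c} applied to $(X_i, Y_i)_{i \in [n]}$ is, as a permutation, equal to LSS from \eqref{eq:standard_mle} applied to $(X'_i, Y'_i)_{i \in [n]}$.

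Next I check that the primed data fit \Cref{mod:noniso_subgauss}. Writing $Y'_i = X'_{\tpi(i)} + Z'_i$ gives $Z'_i = (\Sigma_Z)^{-1/2} Z_i = \nnoise_i$. Since $\Sigma_X$ and $\Sigma_Z$ are simultaneously diagonal, $(\Sigma_Z)^{-1/2}(\Sigma_X)^{1/2} = (\Sigma'_X)^{1/2}$ with $\Sigma'_X := (\Sigma_Z)^{-1} \Sigma_X$, so $X'_i = (\Sigma'_X)^{1/2} \tilde{X}_i$ and $Z'_i = (\Sigma'_Z)^{1/2} \nnoise_i$ with $\Sigma'_Z := I_d$. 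The underlying vectors $\tilde{X}_i, \nnoise_i$ are unchanged, so they retain the zero-mean, independent-coordinate, sub-Gaussian structure with the same constants $K_X, K_Z$.

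Applying \Cref{thm:high_dim_lss} to the primed data with covariances $\Sigma'_X, \Sigma'_Z$, both trace-to-operator-norm hypotheses reduce to $\tr{(\Sigma_Z)^{-1}\Sigma_X}/\opnorm{(\Sigma_Z)^{-1}\Sigma_X} = \omega(\log n)$, the assumption of the present theorem. The sufficient condition \eqref{eq:lss_upper_bound}, using $\sigma'_Z[j] = 1$ and $(\sigma'_X[j])^2 = \sigma^2_X[j]/\sigma^2_Z[j]$, becomes
\begin{equation}
b_{\mathrm{HD}} K_Z^2 \leq (\log n)^{-1} \sum_{j=1}^d \frac{\sigma^2_X[j]}{\sigma^2_Z[j]},
\end{equation}
and a one-line rearrangement using $\tilde{\sigma}^2_X[j] = \sigma^2_X[j]/\sum_k \sigma^2_X[k]$ shows that the right-hand side equals that of \eqref{eq:lssc_upper_bound}. \Cref{thm:high_dim_lss} then gives that the LSS estimator for the primed data equals $\tpi$ with high probability, which by the identification in the first paragraph is exactly $\lsscm = 0$ with high probability.

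The only real obstacle is notational bookkeeping around the diagonal covariances; no new probabilistic content is required. The substantive point is that whitening by a diagonal $\Sigma_Z$ preserves the independent-coordinate structure demanded by \Cref{mod:noniso_subgauss}, so the reduction goes through cleanly; a non-diagonal $\Sigma_Z$ would destroy this structure and would demand a genuinely new argument.
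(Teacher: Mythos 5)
Your reduction is correct and is genuinely different from (and cleaner than) the paper's own proof. The paper proves Theorem~\ref{thm:high_dim_lssc} directly: Lemma~\ref{lem:bound_aug_2_cycle_subgaussian} establishes a separate bound \eqref{eq:aug_cycle_bound_lssc} on the probability that a $t$-cycle is augmenting \emph{for LSS-C}, and then \S\ref{sec:up_bound_hd} runs a parallel version of the Theorem~\ref{thm:high_dim_lss} argument (concentration via Hanson--Wright in Lemma~\ref{lem:conc_quad_abs}, applied now with $D=(\Sigma_Z)^{-1}\Sigma_X$, followed by summing over cycle lengths). Your whitening reduction avoids re-deriving any of this: you observe that LSS-C on $(X_i,Y_i)$ is \emph{identical as a permutation} to LSS on $(X'_i,Y'_i)=((\Sigma_Z)^{-1/2}X_i,(\Sigma_Z)^{-1/2}Y_i)$, that the primed data again fits \Cref{mod:noniso_subgauss} with $\Sigma'_X=(\Sigma_Z)^{-1}\Sigma_X$, $\Sigma'_Z=I_d$, and the same $\tilde X_i$, $\nnoise_i$, $K_X$, $K_Z$, and that the two stable-rank hypotheses of Theorem~\ref{thm:high_dim_lss} collapse to the single one stated in Theorem~\ref{thm:high_dim_lssc} while \eqref{eq:lss_upper_bound} collapses to \eqref{eq:lssc_upper_bound} (both sides equal $(\log n)^{-1}\sum_j \sigma_X^2[j]/\sigma_Z^2[j]$). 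The algebra you flag as ``a one-line rearrangement'' checks out, since $\sum_j \tilde\sigma'^2_X[j]=1$ automatically when $\Sigma'_Z=I_d$.

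What each approach buys: the paper's direct route yields the clean intermediate bound \eqref{eq:aug_cycle_bound_lssc}, which is worth having on its own and keeps the LSS and LSS-C analyses structurally parallel (useful if one later wants to compare the two bounds as the authors do in the discussion of arithmetic vs.\ harmonic means). Your reduction is shorter and makes the conceptual relationship between the two estimators transparent: LSS-C is LSS after whitening. It also makes explicit where the diagonality of $\Sigma_Z$ is load-bearing (you correctly note that a non-diagonal $\Sigma_Z$ would not preserve the independent-coordinate structure required by Model~\ref{mod:noniso_subgauss}), a point that is implicit but less visible in the paper's direct argument. One very minor cost of the reduction: tracing through the proof of Theorem~\ref{thm:high_dim_lss} on the primed data produces two identical concentration terms (the numerator and denominator of the SNR ratio coincide when $\Sigma'_Z = I_d$), so you pick up a redundant factor of $2$ relative to the paper's tighter two-term decomposition in \eqref{eq:bound_mgf_conc}; this is harmless since both proofs give $o(1)$.
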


\bluenew{\vspace{1em}}

\bluenew{We expect the terms on the right hand side of \Cref{eq:lss_upper_bound} and \Cref{eq:lssc_upper_bound} to play the role of signal-to-noise ratios beyond the isotropic Gaussian assumptions. By this we mean that complementary negative results may be established for the corresponding estimators, when the (conjectured) signal-to-noise ratios are below a certain threshold.}

{\color{blue}
For instance, when $X_1,\dots,X_n \iid \mathcal{N}(0,I_d)$ and  $\noise_1, \dots, \noise_n \iid \mathcal{N}(0,\sigma^2 I_d)$, the authors in \cite{geo_match_2022} conjecture that there exists a phase transition as $n \to +\infty$ from \textit{partial recovery} (\ie $\Ex{\lssm} = \Omega(n)$) when $\frac{d}{\log n\sigma^2} < 4$ to \textit{perfect recovery} (\ie $\Ex{\lssm} = o(1)$) when 
$\frac{d}{\log n\sigma^2} > 4$. Similar to our results, they prove the sufficient condition for perfect recovery in terms of $\frac{d}{\log(n)\sigma^2}$. In \cite{wang2022random} Appendix E.1 the authors show that when $d = \omega(\log n)$, any estimator that achieves \textit{strong recovery} (\ie $\Ex{\lssm} = o(n)$) must have $\frac{d}{\log n\sigma^2} > 2$. This shows that, in the informal sense we defined, $\frac{d}{\log n \sigma^2}$ acts as signal-to-noise ratio for the $\lss$ estimator. We remark that determinig whether the sharp constant $4$ is correct for the $\lss$ seems to still be an open problem, to the best of our knowledge. 
}

\bluenew{Under the aforementioned assumptions, the right hand side of both \Cref{thm:high_dim_lss} and \Cref{thm:high_dim_lssc} simplify to the same $\frac{d}{\log n\sigma^2}$, recovering the signal-to-noise ratio for the isotropic Gaussian model, however allowing for making conjectures on the signal-to-noise ratio in more general cases. We, however, do not see our techniques as suitable for yielding conjectures regarding the \textit{exact} constants on which the phase transitions happen, as they hinge on techniques from high-dimensional probability which are usually agnostic to absolute constants \cite{vershynin_2018, rudelson2013hanson}.}

{\color{blue}
We note however, that under non-isotropic Gaussian assumptions, if the right hand side of \Cref{eq:lss_upper_bound} and \Cref{eq:lssc_upper_bound} both act as signal-to-noise ratios for the corresponding estimators, there must be a regime in which $\lssc$ is able to achieve \textit{perfect recovery}, while $\lss$ only achieves \textit{partial recovery}, so long as,
\begin{equation}
    \label{eq:lssc_beats_lss}
    \left( \sum\limits_{j=1}^d \tilde{\sigma}^2_X[j]  \sigma^{-2}_Z[j] \right)^{-1} \ll \left(\sum\limits_{j=1}^d \tilde{\sigma}^2_X[j]\sigma^2_Z[j]\right).
\end{equation}

This would provide a concrete case in which $\lss$ fails, but $\mle$ succeeds.} As this also suggests, the difference between \Cref{thm:high_dim_lss} and \Cref{thm:high_dim_lssc} lies in how the variance of the coordinates of the initial positions and noise interact. In \Cref{thm:high_dim_lss} the noise term includes a weighted arithmetic mean of the variances of the noise vector with weights given by the proportion of each coordinate on the signal term. The noise term in \Cref{thm:high_dim_lssc} has a similar interpretation but with a weighted harmonic mean. Since the harmonic mean is always smaller than the arithmetic mean, the (conjectured) signal-to-noise ratio term in \Cref{thm:high_dim_lssc} always dominates the one from \Cref{thm:high_dim_lss}, as expected.
The conditions,
\begin{equation}
\label{eq:stable_ranks}
    \frac{\tr{\Sigma_Z \Sigma_X}}{\opnorm{ \Sigma_Z \Sigma_X }} = \omega(\log n),\, \frac{\tr{\Sigma_X}}{\opnorm{\Sigma_X}} = \omega(\log n) \text{ and }  \frac{\tr{(\Sigma_Z)^{-1}\Sigma_X}}{\opnorm{ (\Sigma_Z)^{-1}\Sigma_X }} = \omega(\log n) 
\end{equation}
are technical conditions on the stable rank of $\Sigma_Z \Sigma_X$, $\Sigma_X$ and $(\Sigma_Z)^{-1}\Sigma_X$ respectively. These conditions roughly state that the sum of the entries of these diagonal matrices cannot be too concentrated along a small subset of coordinates. All three conditions necessarily require that $d = \omega(\log n)$. For $\Sigma_X$ this condition can be interpreted as saying that we must \textit{effectively} be in a high-dimensional setting by ruling out scenarios where we might have $d = \omega(\log n)$ but our signal is concentrated in a low-dimensional subspace. If $\Sigma_X$ is isotropic, the first condition in \eqref{eq:stable_ranks} rules out the possibility of the strength of noise being concentrated along a small subset of coordinates and the last condition rules out an analogous possibility regarding the inverse of the noise level on the coordinates.

Before proceeding, we note that the proofs of our high-dimensional results also rely on the concept of augmenting cycles that was outlined in the discussion of \Cref{thm:lss_up_low_d}. The main difference is that we must use the Hanson-Wright inequality to show that the quadratic forms on the initial positions concentrate around their expected value. This allows us to obtain sharp estimates as the dimension grows. 

{\color{blue}
\subsection{Comparison with other estimators}
\label{subsec:other_est}
}

\bluenew{\subsubsection{Maximum likelihood estimator}}

{\color{blue}
Unlike in the Gaussian model, for general distributions for the noise, $\lss$ does not coincide with the maximum likelihood estimator (MLE). Assuming $\mathcal{Q}$ is absolutely continuous with density $q$, we have that the MLE is given by:
\begin{equation}
    \mle := \underset{\pi \in S_n}{\text{arg max}} \sum_{j=1}^n  W^{\mathcal{Q},\sigma}_{\pi(j),j}
\end{equation}
}

{\color{blue}
where $W^{\mathcal{Q},\sigma}_{i,j} := \log\left(q\left( \frac{X_i - Y_j}{\sigma} \right) \right)$. The first thing to note is that this estimator does not depend on knowledge of $\mathcal{P}$. Moreover, it is also a random assignment problem which can be efficiently solved with the Hungarian method. The crucial difference, is that unlike in \cite{plant_sharp_thr_ding}, the weights $W_{i,j}^{\mathcal{Q}, \sigma}$ are not independent.
This prevents the establishment of general upper bounds on its errors as in \cite{plant_sharp_thr_ding}. In this work, we get around this issue by working with $\lss$ instead, which allows us to handle correlation through the underlying geometry. As discussed earlier in the introduction, this points to an asymmetry between planted matching recovery in geometrical vs. non-geometrical settings. In the geometrical setting we have candidates for good estimators which require little knowledge of the distribution of the noise, unlike in the non-geometric planted matching case in which positive results use likelihood-based estimators. 
}

{\color{blue}
Although we believe a general theory for $\mle$ might be possible, we believe it to be very challenging due to the correlations introduced. Nevertheless, our main conjecture concerning the comparison between $\mle$ and $\lss$ is as follows:
\begin{conjecture}
    \label{conj:mle_beats_lss}
    For all regimes in which the MLE makes sense, we have that $\Ex{\mlem} \leq \Ex{\lssm}$. 
\end{conjecture}
}

\bluenew{The reason why we believe this to be true, is that $W^{\mathcal{Q},\sigma}_{i,j}$ strictly incorporates more information about the noise distribution than $\ltwo{X_i-Y_j}^2$. For example, in \Cref{fig:error_rate}, which we discuss below, if the set of initial positions is absolutely continuous, the MLE would allow us to obtain perfect recovery a.s. for Spherical and Rademacher distributions, as we obtain (informally) that a.s., $W^{\mathcal{Q}, \sigma}_{i,j} = -\infty$ for all pairs with $i \neq \pi(j)$, since they will be a.s. at a distance $\neq \sigma \sqrt{d}$ from $Y_{j}$. This latter example shows how significant the difference between both can be. Whereas $\mle$ makes no mistakes regardless of $\sigma$, $\lss$ makes an increasingly larger number of mistakes.}

\bluenew{Moreover, the shortcomings that the $\mle$ might have in terms of the existence of large augmenting cycles, are also shared with $\lss$ (unlike the greedy algorithm we discuss below).}

{\color{blue}
A slightly more rigorous motivating heuristic for \Cref{conj:mle_beats_lss} in certain cases can be derived as follows. If the mass of augmenting $2$-cycles dominates the error of both $\Ex{\mlem}$ and $\Ex{\lssm}$, then we may establish \Cref{conj:mle_beats_lss} by using the following result (whose proof we defer to \Cref{a:other_estim}):
\begin{lemma}
    \label{lem:mle_beats_lss_2_cyc}
    Assuming $\tpi = \mathrm{Id}$ and $\mathcal{Q}$ is absolutely continuous, we have,
    \begin{equation}
        \begin{split}
            \Pr{W^{\mathcal{Q},\sigma}_{1,2} + W^{\mathcal{Q},\sigma}_{2,1} > W^{\mathcal{Q},\sigma}_{1,1} + W^{\mathcal{Q},\sigma}_{2,2}} \leq \Pr{\ltwo{X_1-Y_2}^2 + \ltwo{X_2-Y_1}^2 < \ltwo{X_1-Y_1}^2 + \ltwo{X_2-Y_2}^2}.
        \end{split}
    \end{equation}
    In other words, the probability of $(1 ~ 2)$ being an augmenting cycle for $\mle$ is never larger than that of $(1 ~ 2)$ being an augmenting cycle for $\lss$.
\end{lemma}
}

{\color{blue}
This result implies that the mass of augmenting $2$-cycles of the $\mle$ is always no greater than that of $\lss$. This implies that once the errors are dominated by this quantity, \Cref{conj:mle_beats_lss} should hold. An interesting direction for future work is stating general assumptions under which the MLE is expected to perform significantly better. We believe some sufficient conditions are: the tail being heavier than Gaussian (for example the Laplace noise discussed in \Cref{sub:log_lip_dens}), the noise being non-isotropic in meaningful ways as in \eqref{eq:lssc_beats_lss}, or the noise having some very rigid structure like in the Spherical and Rademacher examples discussed above. 
\subsubsection{Greedy distance}
}

\bluenew{Following \cite{geo_match_2022}, another algorithm we might consider is the so-called \textit{greedy distance} algorithm. This algorithm matches each $Y_j$ with the closest $X_i$ (according to $\ltwo{~ \cdot ~}$) if it is available, and declares an error in case it is not available. As we establish on \Cref{a:other_estim}, we expect this estimator to be nearly tied with the $\lss$ for the low-dimensional regime, under sub-Gaussian tails. We expect it to fare worse than the MLE in high-dimensions with sub-Gaussian noise, following on the results from \cite{geo_match_2022}.}

\bluenew{We note however, that $\gdpi$ has an error rate which is easier to bound without directly assuming sub-Gaussian tails. This is because there is no need to control the probability of augmenting cycles of arbitrary size. We specifically exploit this robustness to heavier tails in the result for fixed $d$ established in \Cref{prop:minimax_fix_d}. We, however, do not expect these gains to provide significantly better results as $d$ grows, even when $d=o(\log(n))$. As discussed in \Cref{a:other_estim}, this is because obtaining nearly sharp bounds for $d = o(\log(n))$ with either $\lss$ or $\gdpi$ with our present techniques requires $\Ex{\ltwo{\tilde{Z}_1}^d} = O(d^{\frac{d}{2}})$, which is as strong as requiring sub-Gaussian tails \cite{vershynin_2018}. 
We believe however, that this tail dependence is predicated on the choice of the norm $\|\cdot\|_2$ and hence we expect the MLE, or estimators using different distances, to be able to give improved rates, for instance in the model discussed in \Cref{sub:log_lip_dens}. 

}

\subsection{Organization}

The remainder of the paper is organized as follows. In \Cref{sec:notation} we set our notation. \Cref{sec:examples} presents examples that showcase some consequences of our results when $d = o(\log n)$. In \Cref{sec:lower_bounds_rggs} we discuss matchings in random geometric graphs and their application to minimax lower bounds. In particular, this is where we prove \Cref{thm:lb_low_d} and \Cref{lem:matching_exp}. \Cref{sec:up_bounds} discusses augmenting cycles and proves our upper bound results in both low and high-dimensions. \Cref{sec:conj_future_work} provides some directions for future work. The Appendix contains comments on and extensions of the arguments in the main text.

\subsection{Notation}
\label{sec:notation}

Throughout this work we will use the following conventions: \\

\noindent\textbf{General notation:} $\N, \Z, \R$ denote the set of positive integers, integers and real numbers respectively and for any $m \in \N$, $[m] = \{1, \dots, m\}$. \\

\noindent \textbf{Linear Algebra and Analysis:}  For a vector $v \in \R^d$ we denote its $i$-th coordinate by $v[i]$ and its Euclidean norm by $\ltwo{v}$. $\enorm$ denotes an arbitrary norm on $\R^d$. If $A$ is a $m \times n$ matrix, we denote by $A[i,j]$ the entry at its $i$-th row and $j$-th column. $A[i,:] \in \R^n$ is the $i$-th row and $A[:,j] \in \R^m$ is the $j$th column of $A$ represented as vectors. If $v \in \R^n$, $Av \in \R^m$ denotes the matrix-vector product. 

\begin{equation}
    \opnorm{ A } = \sup_{\|x\| = 1} \ltwo{Ax} \text{ and } \tr{A} = \sum\limits_{k=1}^n A[k,k]
\end{equation}
are the operator norm and trace of $A$ respectively, while $\fnorm{ A }$ is the Frobenius norm of $A$. If $B$ is another $k \times l$ matrix, $A \otimes B$ denotes the Kronecker product between $A$ and $B$. If $k=n$ then, $AB$ denotes the matrix product of $A$ and $B$, and if $k=m$ and $l=n$, $\binner{A}{B} = \tr{B^T A}$ is the inner product between $A$ and $B$.  $B_{\enorm}(v,s) = \{ x \in \R^d; ~ \norm{x-v} < s \}$ is the open ball centered at $v$ with radius $s > 0$ on norm $\enorm$. $\rho_d$ denotes the volume of $B_{\|\,\cdot\,\|_2}(0,1)$ in dimension $d$. $\wedge, \vee$ denote the minimum and maximum between two real numbers. $\log(x)$ denotes the natural logarithm of $x > 0$. \\

\noindent \textbf{Discrete structures:} $\hamdist{\omega}{\omega'}$ denotes the Hamming distance between two strings $\omega$ and $\omega'$ of equal length. If $G = (V,E)$ is a graph then $V(G)$ and $E(G)$ denote its set of vertices and edges respectively. $\binom{n}{k}$ is the binomial coefficient. If $A$ is a discrete set we denote its cardinality by $|A|$. $S_n$ is the group of permutations on $[n]$. \\

\noindent \textbf{Asymptotic Notation:} Let $a_n = \rvseq{a}$ and $b_n = \rvseq{b}$ be two sequences of real numbers. The asymptotic notation $O(\cdot), o(\cdot), \omega(\cdot)$, $\Omega(\cdot), \Theta(\cdot)$ has its standard meaning, $a_n \ll b_n$ and $ a_n \gg b_n$ are synonymous with $a_n = o(b_n)$ and $b_n = o(a_n)$ respectively. $a_n \sim b_n$ indicates that $\frac{a_n}{b_n} \tolim{n} 1$. \\

\noindent \textbf{Probability:} $\Pr{\cdot}$ and $\Ex{\cdot}$ denote probabilities and expectations respectively. We say $Z \sim \ndist$ if the random element $Z$ has law $\ndist$ and $Z_1, \dots, Z_n \iid \ndist$ indicates a sequence of independent and identically distributed (i.i.d.) random elements with common law $\ndist$. $\mathcal{N}(\mu,\Sigma)$ denotes a multivariate Gaussian distribution with mean $\mu \in \R^d$ and $d \times d$ covariance matrix $\Sigma$. $\text{Unif}(S_n)$ is the uniform distribution over $S_n$. If $Z \sim \mathcal{N}(0,1)$ then $\Phi(x) = \Pr{Z \leq x}$ is the cumulative distribution function of a standard Gaussian random variable $Z$. If $\ndist_1$ and $\ndist_2$ are two probability distributions on $\mathcal{Z}_1$ and $\mathcal{Z}_2$, $\ndist_1 \otimes \ndist_2$ denotes the product measure on $\mathcal{Z}_1 \times \mathcal{Z}_2$ with marginals $\ndist_1$ and $\ndist_2$. When $\mathcal{Z}_1 = \mathcal{Z}_2$, $\tvnorm{\ndist_1 - \ndist_2}$ indicates the total variation distance between $\ndist_1$ and $\ndist_2$. 

We say that a sequence of events $A_n = \rvseq{A}$ happens with high probability if $\Pr{A_n} = 1 - o(1)$. Given a sequence $U_n = \rvseq{U}$ of random variables, we say that $U_n = \Theta(a_n)$ with high probability if there exists absolute constants $c$ and $C$, not depending on $n$, such that $c a_n \leq U_n \leq C a_n$ holds with high probability. 
\begin{equation}
    U_n \pc{n} U, U_n \asc{n} U \text{ and } U_n \dc{n} U
\end{equation}
denote convergence in probability, almost sure convergence and convergence in law respectively. If $U'$ is another random variable, then $U \stackrel{d}{=} U'$ indicates equality in law. $o_p(1)$ denotes a random quantity that converges to $0$ in probability as $n \to +\infty$. $\snorm{U}$ denotes the sub-Gaussian norm of $U$ defined as:
{\color{blue}
\begin{equation}
    \snorm{ U } := \inf \set{L > 0; ~ \Ex{\expo{\frac{U^2}{L^2}}} \leq 2 }.
\end{equation}
If $V$ is a random vector, $\snorm{V} = \sup_{\norm{v}=1} \snorm{\binner{V}{v}}$ is the sub-Gaussian norm of $V$.
}

\subsection*{Note on LLM usage}

Some aspects of the revision of this paper were done aided by Gemini and Claude CLI's. The models were used to create latex macros and double check the text for typos and small inconsistencies on literature review, mathematical proofs and language use. All edits suggested by the language models were either carried out by the authors, or manually approved them. No edit suggested by an LLM involved any substantial intellectual revisions of the text, but merely small corrections for typos (e.g. wrong sign in a proof with no major change to the proof correctness), small aspects of language usage (e.g. asking for revision of overly colloquial language) and minor aspects of references to the literature. 

\section{Examples in the low-dimensional regime}
\label{sec:examples}

In this section we show that \Cref{thm:lb_low_d} and \Cref{thm:lss_up_low_d} can be used to derive positive and negative results for multiple choices of distributions for the $X_i$'s and $Z_i$'s.

\subsection{Gaussian initial positions and noise}
\label{sub:gaussian_low_dim}
We first consider the Gaussian model from \cite{geo_match_2022} and show that the rate obtained for the MLE is informational theoretical optimal up to factors of order $n^{o(1)}$. More precisely we have:

\begin{proposition}[Low-dimension, Gaussian initial positions, Gaussian noise] In \eqref{eq:final_pos}, let $d = o(\log n)$, $X_1 \sim \mathcal{N}(0,I_d)$ and $Z_1 \sim \mathcal{N}(0,\sigma^2 I_d)$. Then as $n \to +\infty$,
\begin{equation}
    \inf_{\hpi} \Ex{\gm} = (n^{(2 + o(1))} \sigma^d) \wedge n^{(1 + o(1))}.
\end{equation}    
\end{proposition}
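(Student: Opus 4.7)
The plan is to apply \Cref{thm:lb_low_d} for the lower bound and \Cref{thm:lss_up_low_d} (with $\hpi = \lss$) for the upper bound, then observe that, under $d = o(\log n)$, every dimension-dependent constant appearing in both bounds is absorbed into an $n^{o(1)}$ factor. This will pin $\inf_{\hpi}\Ex{\gm}$ to within a factor of $n^{\pm o(1)}$ of $(n^2\sigma^d)\wedge n$, which is exactly what the stated equality asserts.

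First I would verify that \modlds holds with the Euclidean norm, radius $R_d = \sqrt{2d}$, and absolute constants $\gamma = 1/2$ and $\beta = 2$. Assumption~\ref{cond:i_pos_low_dim}(i) follows from $\Pr{\ltwo{X_1} < \sqrt{2d}} = \Pr{\chi^2_d \leq 2d} \geq 1/2$ for every $d \geq 1$. Assumption~\ref{cond:i_pos_low_dim}(ii) holds because the density $f_\idist(x) = (2\pi)^{-d/2} e^{-\ltwo{x}^2/2}$ varies by a factor of at most $e^d$ on $\{\ltwo{x} < \sqrt{2d}\}$. For assumption~\ref{cond:noise_low_dim}, $\ndist(v) \otimes \ndist(-v)$ and $\ndist \otimes \ndist$ are two $2d$-dimensional Gaussians with common covariance $I_{2d}$ and mean separation $\sqrt{2}\,\ltwo{v}$, so the closed-form total variation between Gaussians with a shared covariance gives $\norm{\ndist(v)\otimes \ndist(-v) - \ndist \otimes \ndist}_{TV} = 2\Phi(\ltwo{v}/\sqrt{2})-1$; hence the complementary affinity is at least $2\Phi(-\sqrt{d})$ whenever $\ltwo{v} < \sqrt{2d}$, which a standard Gaussian tail lower bound upgrades to $\geq e^{-2d}$ for every $d \geq 1$. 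Finally, the extra hypotheses of \modlds are immediate: $\linf{f_\idist} = (2\pi)^{-d/2} \leq 1 \leq e^{\beta d}$, and $K_\ndist$ equals the sub-Gaussian norm of a one-dimensional standard Gaussian (a universal constant), while $\Ex{\nnoise_1} = 0$.

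With $\beta$ and $\gamma$ now absolute constants, \Cref{thm:lb_low_d} yields
\[
    \inf_{\hpi}\Ex{\gm} \,\geq\, \frac{\gamma^2}{32}\,e^{-7\beta d}\bigl((n^2\sigma^d)\wedge n\bigr) \,=\, n^{-o(1)}\bigl((n^2\sigma^d)\wedge n\bigr),
\]
where the last step uses $d = o(\log n)$. In the other direction, \Cref{thm:lss_up_low_d} gives $\Ex{\lssm} \leq (3K^d n^2 \sigma^d) \wedge n$ with $K = b_{\mathrm{LD}}\linf{f_\idist}^{2/d} K_\ndist^2 e^{2\beta}$. Since $\linf{f_\idist}^{2/d} = (2\pi)^{-1}$ and every remaining factor is an absolute constant, $K = O(1)$ and hence $K^d = n^{o(1)}$, so that taking $\hpi = \lss$ in the infimum gives $\inf_{\hpi}\Ex{\gm} \leq n^{o(1)}\bigl((n^2\sigma^d)\wedge n\bigr)$. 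Combining the two bounds yields the claimed identity.

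The only mildly delicate step is the verification of assumption~\ref{cond:noise_low_dim}: one must argue that shifting a $d$-dimensional standard Gaussian by a vector of Euclidean norm $\Theta(\sqrt{d})$ leaves an \emph{exponentially} (rather than inverse-polynomially) small overlap between the tensorised laws. The closed-form total variation between two Gaussians with shared covariance — equivalently, the multiplicativity of the Bhattacharyya affinity $BC(\ndist(v),\ndist)^2 = e^{-\ltwo{v}^2/4}$ — makes this clean, but one has to pick $\beta$ large enough (e.g.\ $\beta = 2$) so the bound survives uniformly for all $d \geq 1$, not merely asymptotically.
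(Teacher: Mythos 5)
Your proof is correct and follows essentially the same route as the paper: verify that \modlds holds with dimension-independent constants $\gamma$ and $\beta$, apply \Cref{thm:lb_low_d} and \Cref{thm:lss_up_low_d}, and absorb all exponential-in-$d$ factors into $n^{o(1)}$ using $d=o(\log n)$. The only point of divergence is the verification of assumption~\ref{cond:noise_low_dim}: the paper bounds the total variation via Bretagnolle--Huber plus the Gaussian KL formula (yielding $\beta = 2+\log 2$), whereas you use the exact closed-form TV between equal-covariance Gaussians together with a uniform-in-$d$ tail lower bound (yielding $\beta = 2$); both are valid and the constant is immaterial to the $n^{o(1)}$ conclusion.
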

\begin{proof}
If we take $\idist = \mathcal{N}(0,I_d)$ and $R_d = \sqrt{2d}$, we have that for all sufficiently large $d$, assumption \ref{cond:i_pos_low_dim} from \Cref{mod:low_dim} holds with $\enorm = \|\,\cdot\,\|_2$, $\gamma = \frac{1}{2}$ and $\beta = 1$. If we let $\ndist = \mathcal{N}(0,I_d)$, using Bretagnolle–Huber inequality \cite{brethuber} and the formula for the Kullback-Leibler divergence between multivariate Gaussian distributions we have:
\begin{equation}
\begin{split}
     \norm{ \ndist(v) \otimes \ndist(-v) - \ndist \otimes \ndist}_{TV} &\leq 1 - \frac{1}{2} \expo{-\ltwo{v}^2},
\end{split}
\end{equation}
and so assumption \ref{cond:noise_low_dim} holds with $\beta = 2+\log(2)$. From \Cref{thm:lb_low_d} it follows that:
\begin{equation}
    \inf_{\hpi} \Ex{\gm} = \Omega\left( (n^{(2 - o(1))} \sigma^d)) \wedge n^{(1 - o(1))} \right).
\end{equation}
Since $\linf{f_\idist} \leq (2\pi)^{-\frac{d}{2}}$ and $\ndist$ has bounded sub-Gaussian norm and zero-mean, assumptions \ref{cond:up_density} and \ref{cond:sub_gauss} are also satisfied. It follows from \Cref{thm:lss_up_low_d} that,

\begin{equation}
    \inf_{\hpi} \Ex{\gm} \leq \Ex{\lssm} = O\left( (n^{(2 + o(1))} \sigma^d)) \wedge n^{(1 + o(1))} \right),
\end{equation}
from which the result follows.
\end{proof}
Note that our upper bound for the Gaussian model is slightly worse than the one in \cite{geo_match_2022} because of an extra $n^{o(1)}$ factor. This is natural since our results make use of less properties of the distribution of the noise. On the other hand, if $d$ is fixed we can ignore the $n^{o(1)}$ terms in the exponent and thus obtain that the optimal error rate is of order $\Theta\left( (n^2 \sigma^d) \wedge n \right)$. This in particular confirms that the upper bound for the expected number of mistakes made by the greedy distance algorithm analyzed in Appendix A.1 of \cite{geo_match_2022} is sharp when $d$ is fixed. 

\subsection{Fixed dimension, continuous initial positions and noise}
\label{subsec:fix_dim_app}

If we take $d, \idist$ and $\ndist$ to be fixed as $n$ grows we can establish the optimal rate for a much wider class of distributions. These assumptions are very natural if one considers for instance the problem of particle tracking.
\begin{proposition}[Fixed dimension, continuous initial positions, continuous and centered noise]
\label{prop:minimax_fix_d}
    In \eqref{eq:final_pos}, let $d$ be fixed, $X_1 \sim \idist$ and $\noise_1 = \sigma \nnoise_1$ for $\sigma > 0$ and $\nnoise_1 \sim \ndist$ with $\idist$ and $\ndist$ two fixed distributions in $\R^d$ having bounded continuous densities, the latter also being zero-mean and satisfying $\Ex{\ltwo{\tilde{Z}_1}^d} < +\infty$. Then, as $n\to +\infty$,
    \begin{equation}
        \inf_{\hpi} \Ex{\gm} = \Theta\left( (n^2 \sigma^d) \wedge n \right).
    \end{equation}
\end{proposition}

\begin{proof}
Since $f_\idist$, the density of $\idist$, is continuous, it follows that for any choice of norm $\enorm$ there must exists $v \in \R^d$ and $R_d > 0$ such that:
\begin{equation}
    \inf_{\norm{x-v} < R_d} f_\idist(x) > 0.
\end{equation}
Suppose without loss of generality that $v = 0$, the case $v \neq 0$ is analogous. Then assumption \ref{cond:i_pos_low_dim} from \Cref{mod:low_dim} is satisfied with this same choice of $\enorm$ and $R_d$ and with,
\begin{equation}
    \gamma = \idist\left(B_{\enorm}(0,R_d) \right) > 0 \text{ and } \beta = \log(2) \vee \left(\frac{1}{d}\left(\log(\linf{f_\idist}) - \log\left(\inf_{\norm{x} < R_d} f_\idist(x)\right)\right)\right).
\end{equation}
Also, we have:
\begin{equation}
\label{eq:int_rep_tv}
     \norm{ \ndist(v) \otimes \ndist(-v) - \ndist \otimes \ndist}_{TV} = \frac{1}{2} \int_{\R^{d} \times \R^d } |f_\ndist(z_1 - v)f_\ndist(z_2 + v) - f_\ndist(z_1)f_\ndist(z_2)|dz_1 dz_2.
\end{equation}
Since for each $z_1, z_2 \in \R^d$, $f_\ndist(z_1 - v)f_\ndist(z_2 + v) \to f_\ndist(z_1)f_\ndist(z_2)$ as $\norm{v} \to 0$ due to the continuity of $f_\ndist$, it follows from Scheffé's Lemma that the integral on the right hand side of \eqref{eq:int_rep_tv} goes to $0$. Taking $R_d$ smaller and $\beta$ larger if necessary, assumption \ref{cond:noise_low_dim} from \Cref{mod:low_dim} also holds. Thus \Cref{thm:lb_low_d} follows and $\inf_{\hpi} \Ex{\gm} = \Omega((n^2 \sigma^d) \wedge n)$. 

\bluenew{The greedy distance algorithm discussed in \Cref{subsec:other_est} achieves an expected error of order $O(n^2\sigma^d \wedge n)$ as shown in \Cref{a:other_estim}. Hence,}
\begin{equation}
    \inf_{\hpi} \Ex{\gm} = O\left( (n^2 \sigma^d) \wedge n \right).
\end{equation}

\end{proof}

{\color{blue}
A corollary of this results is that we have the following phase transitions:
\begin{corollary}
    \label{cor:phase_trans}
    \begin{enumerate}
        \item If $\sigma \ll n^{-\frac{2}{d}}$, \textit{near-perfect recovery} $(\Ex{\gm} = o(1))$ is achievable.
        \item If $ n^{-\frac{2}{d}} \ll \sigma \ll n^{-\frac{1}{d}}$, \textit{near-perfect recovery} is impossible, but \textit{strong recovery} $(\Ex{\gm} = o(n))$ is achievable.
        \item If $ n^{-\frac{1}{d}} \ll \sigma$, \textit{strong recovery} is impossible.
    \end{enumerate}
\end{corollary}
}

\bluenew{Analogous phase transitions were already discussed in \cite{inference_particle_tracking} and in \cite{geo_match_2022}. The scale $n^{-\frac{2}{d}}$ can be understood as the scale of the minimum distance between any pair of initial positions, while the scale $n^{-\frac{1}{d}}$ provides the typical distance between any pair of nearest neighbors.}

{\color{blue}
We note that under the stronger assumption that $\tilde{Z}_1$ is sub-Gaussian, $\lss$ satisfies:
\begin{equation}
    \Ex{\lssm} = O\left( (n^2 \sigma^d) \wedge n \right)
\end{equation}
following \Cref{thm:lss_up_low_d}. Hence in this case it is minimax optimal. It is unclear to us what are the minimal assumptions on the low-dimensional regime for $\lss$ to be minimax optimal. But we believe a stronger form of tail assumptions than merely $\Ex{\ltwo{Z_1}^d} < \infty$ is required to obtain strong control over the probability of large augmenting cycles increasing the number of errors significantly.
}

Under these same assumptions for minimax optimality of $\lss$, we can also improve our constant in the upper bounds from \Cref{thm:lss_up_low_d}:
\begin{proposition}[Improved error upper bound for LSS (proof in \S \ref{sec:up_bound_ld})]
\label{thm:lss_up_fix_d} We have that for all $n \geq 2$,
\begin{equation}
     \Ex{\lssm} \leq ((\tau+o(n\sigma^d)) (n^2 \sigma^d)) \wedge n,
\end{equation}
for,
\begin{equation}
    \tau = \tau_{\idist,\ndist,d} = 2^{-d}\rho_d \Ex{f_\idist(X_1)} \Ex{\|\nnoise_1 - \nnoise_2 \|_2^d},
\end{equation}
where $\rho_d$ is the volume of $B_{\ltwo{\cdot}}(0,1)$ in $\R^d$.
\end{proposition}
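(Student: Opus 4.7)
The plan is to refine the augmenting-cycles decomposition already used for \Cref{thm:lss_up_low_d}, keeping the $t=2$ term sharp and showing that cycles of length $t \geq 3$ contribute only a lower-order remainder. Without loss of generality I will assume $\tpi = \mathrm{Id}$; the deterministic inequality \eqref{eq:det_bound_lss} then gives
\begin{equation*}
\Ex{\lssm} \;\leq\; \sum_{t=2}^{n} \frac{n!}{(n-t)!\,t}\cdot t\cdot p_t,
\end{equation*}
where $p_t$ is the probability that a fixed $t$-cycle is augmenting. The goal is to show $p_2 = \sigma^d(\tau + o(1))$ as $\sigma \to 0$, and $p_t \leq (C\sigma^d)^{t-1}$ for $t \geq 3$ with a constant $C = C_{d,\idist,\ndist}$ that only depends on the (fixed) parameters of \Cref{prop:lss_fix_d}.

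For the dominant 2-cycle term I would first reduce the augmenting condition to a geometric event. Setting $U := X_i - X_j$ and $V := \nnoise_j - \nnoise_i$, expanding inner products shows that $(i,j)$ is augmenting iff $\ltwo{U}^2 \leq \sigma \binner{U}{V}$, which by completing the square is equivalent to
\begin{equation*}
\ltwo{U - \tfrac{\sigma V}{2}} \;\leq\; \tfrac{\sigma \ltwo{V}}{2}.
\end{equation*}
Since $U$ and $V$ are independent and $U$ has a bounded continuous density $f_U$ (the convolution of $f_\idist$ with its reflection, continuous because $f_\idist$ is), conditioning on $V$ and changing variables $u = \sigma w$ yields
\begin{equation*}
\sigma^{-d} p_2 \;=\; \Ex\!\left[\int_{\ltwo{w - V/2}\leq \ltwo{V}/2} f_U(\sigma w)\,dw\right].
\end{equation*}
As $\sigma \to 0$ the inner integrand converges pointwise to $f_U(0) = \int f_\idist(x)^2\,dx = \Ex{f_\idist(X_1)}$, and dominated convergence (with an envelope provided by $\linf{f_U}$ together with the sub-Gaussian moment $\Ex{\ltwo{V}^d} < \infty$) gives $\sigma^{-d} p_2 \to f_U(0)\,2^{-d}\rho_d\,\Ex{\ltwo{V}^d} = \tau$. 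Multiplying by the number of pairs produces a 2-cycle contribution of $n(n-1)\sigma^d(\tau + o(1)) \leq (\tau + o(1))\,n^2 \sigma^d$.

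For $t \geq 3$ I would simply reuse the moment-generating-function estimate that underlies \Cref{thm:lss_up_low_d}: under the fixed-dimension hypotheses it yields $p_t \leq (C\sigma^d)^{t-1}$. Summing,
\begin{equation*}
\sum_{t=3}^{n}\frac{n!}{(n-t)!}\,p_t \;\leq\; \sigma^{-d}\sum_{t=3}^{\infty}(Cn\sigma^d)^t \;=\; O(n^3\sigma^{2d}) \;=\; O(n\sigma^d)\cdot n^2 \sigma^d,
\end{equation*}
which is valid in the nontrivial regime $Cn\sigma^d < 1/2$; outside of it the trivial bound $\lssm \leq n$ takes over and the claim $\Ex{\lssm} \leq n$ is immediate. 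Combining with the 2-cycle term then produces $\Ex{\lssm} \leq (\tau + o(n\sigma^d))\,n^2\sigma^d \wedge n$, as required.

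The main obstacle is the precise control needed for the dominated convergence step that isolates $\tau$: I must move the $\sigma \to 0$ limit inside the expectation over $V$. The pointwise convergence is immediate from continuity of $f_U$ at the origin, but to dominate the integrand I would split on $\{\ltwo{V} \leq R\}$ and $\{\ltwo{V} > R\}$, using the uniform bound $\linf{f_U}$ on the first event and the sub-Gaussian tail of $\nnoise_1 - \nnoise_2$ on the second, with $R$ chosen to diverge slowly. This is exactly the regime in which the continuity and boundedness assumptions of \Cref{prop:lss_fix_d} are used essentially; beyond this, no new ingredients are needed.
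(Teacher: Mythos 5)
Your proposal is correct and follows essentially the same route as the paper. The paper's Lemma \ref{lem:estim_aug_2_cycle} isolates the exact small-$\sigma$ asymptotics of the probability that a $2$-cycle is augmenting by conditioning on the noise difference, recognizing the augmenting event as membership in a Euclidean ball whose radius scales with $\sigma$, and passing to the limit via dominated convergence with envelope $\|f_{\Delta_X}\|_\infty \le \|f_\idist\|_\infty$ and the sub-Gaussian moment $\Ex{\|\nnoise_1-\nnoise_2\|_2^d} < \infty$ — exactly your argument, modulo a cosmetic change of variables $u = \sigma w$ that fixes the domain of integration rather than normalizing by the ball's volume. The paper then feeds this sharp $t=2$ estimate into the augmenting-cycle series from \eqref{eq:det_bound_lss}, handling $t\ge 3$ with Lemma \ref{lem:aug_up_bound_fix_d} as you do; the geometric-series computation showing that the $t\ge 3$ contribution is $O(n\sigma^d)\cdot n^2\sigma^d$ and the trivial bound $\Ex{\lssm} \le n$ outside the regime $Cn\sigma^d < 1/2$ are all part of the paper's intended argument.
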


\bluenew{We conjecture that our constant $\tau$ in \Cref{thm:lss_up_fix_d} is sharp in the regime where $n \sigma^d = o(1)$. This conjecture stems from two observations: first that the mass of augmenting $2$-cycles should dominates the error of the $\lss$ on the aforementioned regime, and second that, as we will in \Cref{lem:estim_aug_2_cycle}, for $\sigma$ small, $\Pr{C_2 \text{is augmenting}} \approx \tau \sigma^d$. The missing ingredient is a sharp lower bound on the error rate of $\lss$ on this regime.}

{\color{blue}
This is supported by the simulations shown in \Cref{fig:error_rate}. The interesting feature of this result, which is illustrated in the same figure, is that when $n \sigma^d = o(1)$ the error rate depends very weakly on the geometry of the distribution $\ndist$, so distributions with significantly different geometric structures give rise to an error rate similar to the Gaussian model. When $\sigma^d n^2$ starts to have order $\Omega(n)$, other geometric properties of $\idist$ and $\ndist$ seem to play crucial roles and the predicted value of $\tau n^2 \sigma^d$ ceases to be a good approximation to $\Ex{\lssm}$ as suggested by the same figure. 
We remark that although some of the distributions in \Cref{fig:error_rate} do not directly satisfy the assumptions of \Cref{thm:lss_up_fix_d}, we will see in \Cref{sec:up_bound_ld} that the same result holds under weaker assumptions which includes all examples shown. 
}
\begin{figure}[H]
\begin{subfigure}{.5\textwidth}
  \centering
  \includegraphics[scale=0.28]{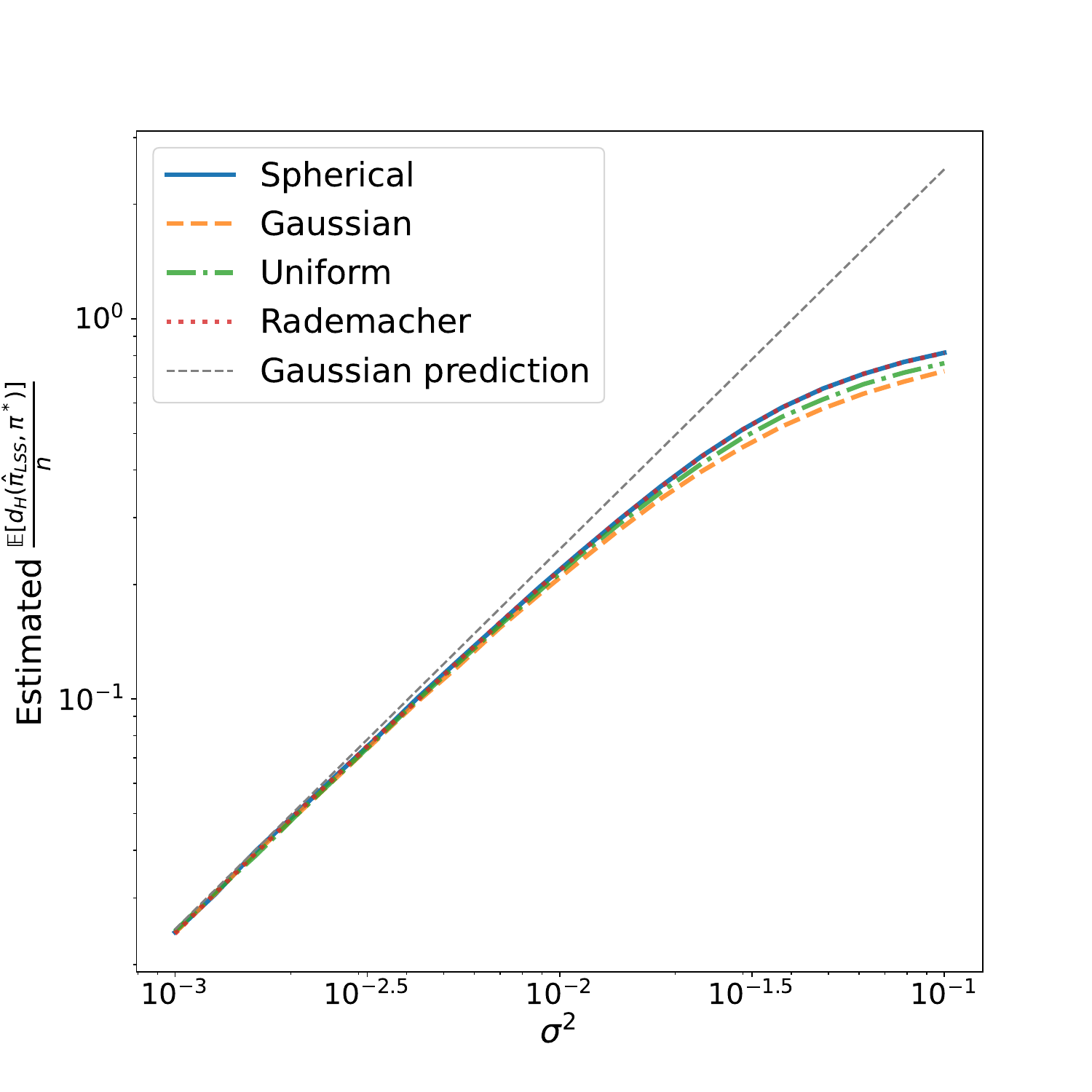}
  \caption{$d=2$}
\end{subfigure}
\begin{subfigure}{.5\textwidth}
  \centering
  \includegraphics[scale=0.28]{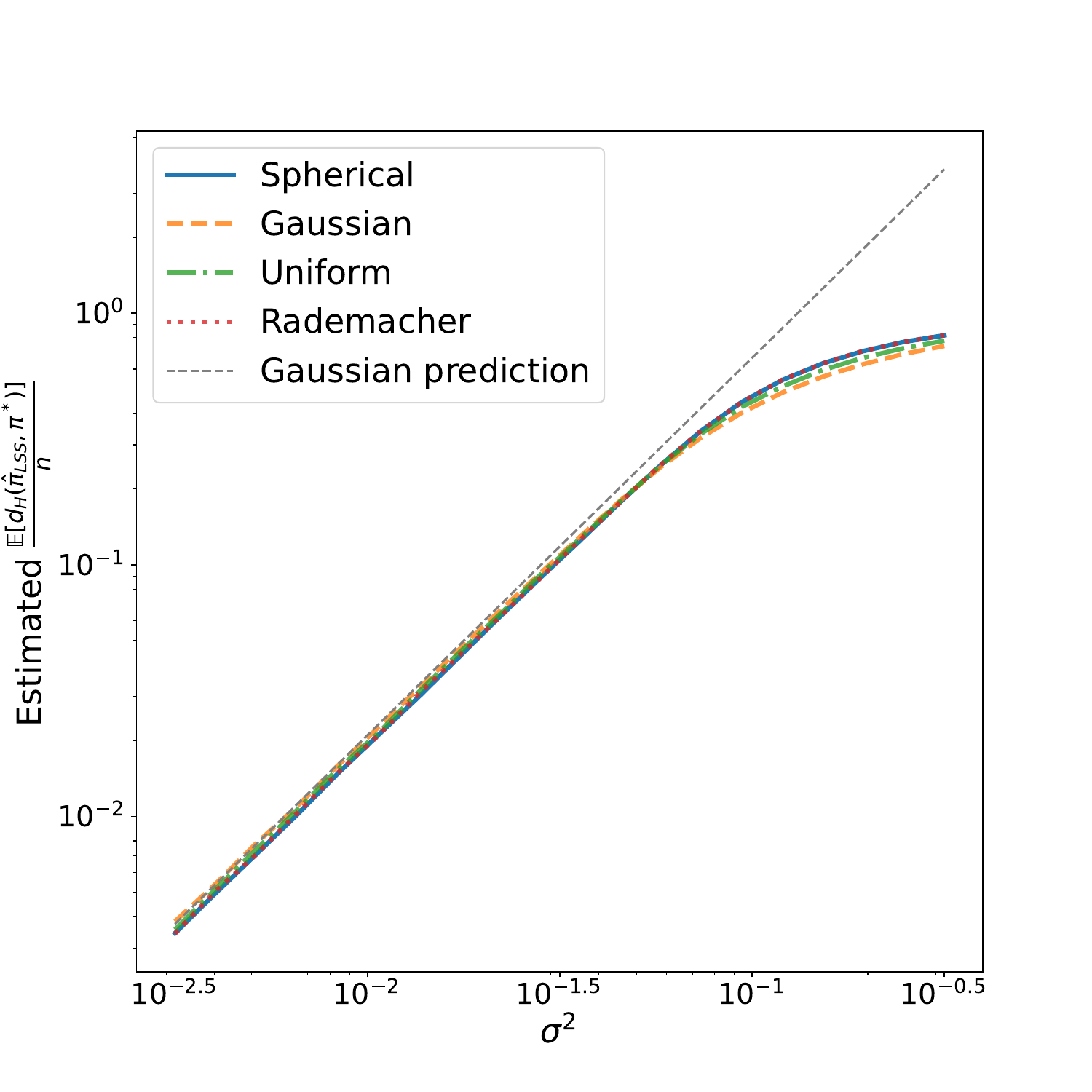}
  \caption{$d=3$}
\end{subfigure}
\caption{Simulated experiments to estimate the error rate $\frac{\Ex{\lssm}}{n}$ as a function of $\sigma^2$ for different choices of $\ndist$. We consider \eqref{eq:final_pos} with $X_1 \sim \mathcal{N}(0,I_d)$ and $\noise_1 = \sigma \nnoise_1$ with four different choices for the distribution of $\nnoise_1$: $\nnoise_1 \sim \mathcal{N}(0,I_d)$ (Gaussian), $\nnoise_1 \sim \text{Unif}(\sqrt{d}\mathbb{S}^{d-1})$ (Spherical), $\nnoise_1 \sim \text{Unif}([-\sqrt{3},\sqrt{3}]^d)$ (Uniform) and $\nnoise_1 \sim \text{Unif}(\{-1,1\}^d)$ (Rademacher). All distributions are scaled so that they lead to the same value of $\tau$ in \Cref{thm:lss_up_fix_d}. We also plot $\tau n^2 \sigma^d$ (Gaussian prediction) for this same value of $\tau$ from \Cref{thm:lss_up_fix_d}. For each choice of $\ndist$ in the simulations we considered $n=100, \, d=2,3$ and took the average of $\frac{\lssm}{n}$ over $10000$ independent trials. Both axis are plotted using log-scale. The code to reproduce the figures can also be found at \href{https://github.com/Lucas-Schwengber/particle_tracking}{https://github.com/Lucas-Schwengber/particle\_tracking}. }  
\label{fig:error_rate}
\end{figure}

\subsection{Uniformly distributed initial positions}

We now consider a model where the initial positions are uniformly distributed on the unit cube in $\R^d$ and we allow $d$ to grow slowly with $n$.

\begin{proposition}[Low-dimension, uniform initial positions, noise with i.i.d. sub-Gaussian coordinates] In \eqref{eq:final_pos}, let $d = o(\log n)$, $X_1 \sim \mathrm{Unif}([-1,1]^d)$ and $\noise_1 =  \sigma \nnoise_1$, where $\sigma > 0$,
\begin{equation}
    \nnoise_1 \sim \ndist = \bigotimes\limits_{j=1}^d \ndist',
\end{equation}
for $\ndist'$ a zero-mean, sub-Gaussian distribution in $\R$ such that:
\begin{equation}
    \sup_{|v| < 1} \tvnorm{\ndist'(v) - \ndist'} < 1-\delta,
\end{equation}
for some $\delta \in (0,1)$.
We have that as $n \to +\infty$,
\begin{equation}
    \inf_{\hpi} \Ex{\gm} = (n^{(2+ o(1))} \sigma^d) \wedge n^{(1 + o(1))} .
\end{equation}
\end{proposition}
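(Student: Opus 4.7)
The plan is to verify the assumptions of \Cref{mod:low_dim} and its augmentation \modlds, and then invoke \Cref{thm:lb_low_d} and \Cref{thm:lss_up_low_d} directly; since $d=o(\log n)$, all constants appearing in those bounds are absorbed into an $n^{o(1)}$ slack.

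First I would instantiate the norm as $\norm{\cdot} = \linf{\cdot}$ and take $R_d = 1$. Under $\idist = \mathrm{Unif}([-1,1]^d)$ the ball $B_{\linf{\cdot}}(0,1) = (-1,1)^d$ carries all the mass, so condition (a)(i) of \Cref{mod:low_dim} holds with $\gamma = 1$. The density is identically $2^{-d}$ on this ball, so the essential-supremum-to-essential-infimum ratio in (a)(ii) equals $1$ and any $\beta \geq \log 2$ works. The same choice yields $\linf{f_{\idist}} = 2^{-d} \leq e^{\beta d}$, which is the density assumption of \modlds. The sub-Gaussian assumption of \modlds is also immediate: since $\nnoise_1$ has i.i.d. zero-mean sub-Gaussian coordinates with common sub-Gaussian norm bounded by some constant $K_{\ndist'}$, we have $\snorm{\binner{\nnoise_1}{v}} \lesssim K_{\ndist'}$ for every unit $v$, so $K_\ndist$ can be taken as a constant.

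For condition (b), the key point is that both $\ndist(v) \otimes \ndist(-v)$ and $\ndist \otimes \ndist$ factor as product measures over $2d$ one-dimensional marginals. Applying the coupling inequality for product measures,
\begin{equation*}
    \tvnorm{\ndist(v) \otimes \ndist(-v) - \ndist \otimes \ndist} \leq 1 - \prod_{j=1}^{d} (1 - \tvnorm{\ndist'(v[j]) - \ndist'})(1 - \tvnorm{\ndist'(-v[j]) - \ndist'}).
\end{equation*}
When $\norm{v} < 1$ every coordinate satisfies $|v[j]| < 1$, so by hypothesis each factor on the right is at least $\delta$, and the whole bound is at most $1 - \delta^{2d}$. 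Choosing $\beta \geq \max(\log 2,\, 2\log(1/\delta))$ yields $\delta^{2d} \geq e^{-\beta d}$, so condition (b) is satisfied.

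With all hypotheses verified, \Cref{thm:lb_low_d} gives $\inf_{\hpi} \Ex{\gm} \geq (\gamma^2/32)\, e^{-7\beta d}\bigl((n^2\sigma^d)\wedge n\bigr)$, while \Cref{thm:lss_up_low_d} gives $\Ex{\lssm} \leq (3 K^d n^2\sigma^d)\wedge n$, where $\beta$ and $K$ depend only on $\delta$ and on the sub-Gaussian norm of $\ndist'$, all of which are independent of $n$ and $d$. Because these constants are bounded and $d = o(\log n)$, both $e^{-7\beta d}$ and $K^d$ are $n^{\pm o(1)}$, so the lower and upper bounds match up to an $n^{o(1)}$ factor, yielding $\inf_{\hpi}\Ex{\gm} = (n^{2+o(1)}\sigma^d) \wedge n^{1+o(1)}$. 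The only substantive step is the product-measure total variation estimate leading to the $1 - \delta^{2d}$ bound; everything else is a routine translation of the hypotheses into the framework of the two main theorems.
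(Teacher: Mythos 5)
Your proposal is correct and follows essentially the same path as the paper: instantiate $\enorm = \linf{\cdot}$, $R_d = 1$, $\gamma = 1$, verify assumption (a) with $\beta = \log 2$, bound the total variation between the $2d$-fold product measures via the tensorization inequality $1-\tvnorm{\bigotimes_i P_i - \bigotimes_i Q_i} \geq \prod_i (1-\tvnorm{P_i-Q_i})$ to get $\delta^{2d} \geq e^{-\beta d}$ with $\beta = \log 2 \vee 2\log(1/\delta)$, and then invoke \Cref{thm:lb_low_d} and \Cref{thm:lss_up_low_d}. The paper states the coupling step more tersely ("a simple coupling argument"), but the content is identical; your more explicit factorization into $2d$ marginals is the same calculation spelled out.
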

\begin{proof}
Taking $\idist = \text{Unif}([-1,1]^d)$, assumption \ref{cond:i_pos_low_dim} from \Cref{mod:low_dim} is satisfied with $\enorm = \linf{\,\cdot\,}$, $R_d=1$, $\gamma = 1$ and $\beta = \log(2)$. Using a simple coupling argument one can show that the total variation distance of product measures satisfies:
\begin{equation}
    \left(1 - \tvnorm{\mathcal{Q}_1 \otimes \mathcal{Q}_2 - \mathcal{Q}'_1 \otimes \mathcal{Q}'_2}\right) \geq \left( 1 - \tvnorm{\mathcal{Q}_1 - \mathcal{Q}'_1} \right)\left( 1 - \tvnorm{\mathcal{Q}_2 - \mathcal{Q}'_2} \right).
\end{equation}
Since $\ndist$ from our assumption is a product measure, it follows that condition \ref{cond:noise_low_dim} from \Cref{mod:low_dim} holds with,
\begin{equation}
    \beta = \log(2) \vee \left(-2\log(\delta)\right),
\end{equation}
and so \Cref{thm:lb_low_d} holds and we have a lower bound of order $\Omega((n^{(2-o(1))} \sigma^d) \wedge n^{(1-o(1))})$. Since $\mathcal{Q}'$ is also zero-mean, sub-Gaussian and $\linf{f_\idist} \leq 2^{-d}$, the assumptions from \modlds also hold and thus the upper bound of order $O((n^{(2+o(1))} \sigma^d) \wedge n^{(1+o(1))})$ follows.
\end{proof}

 
\subsection{Noise with a log-Lipschitz density}
\label{sub:log_lip_dens}
We consider a lower bound in the regime where $d = o(\log n)$ with initial positions following a Laplace distribution and a distribution on the noise which is compatible with this assumption.

\begin{proposition}[Low-dimension, Laplace initial positions, noise with log-Lipschitz density]

In \eqref{eq:final_pos}, let $d = o(\log n)$ and assume that $X_1$ has a standard multivariate Laplace distribution, \textit{i.e.},
\begin{equation}
    X_1[1], \dots, X_1[d] \iid \mathrm{Lap}(0,1),
\end{equation}
where $\mathrm{Lap}(0,1)$ is the standard Laplace distribution on $\R$. Assume also that $\ndist$ has a continuous density $f_\ndist$ such that $\log(f_\ndist)$ is $L$-Lipschitz on the $\norm{\,\cdot\,}_1$ norm. Then as $n \to +\infty$,
\begin{equation}
    \inf_{\hpi} \Ex{\gm} = \Omega((n^{(2-o(1))} \sigma^d) \wedge n^{(1-o(1))}).
\end{equation}
\end{proposition}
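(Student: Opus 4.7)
The plan is to verify that Model \ref{mod:low_dim} holds with parameters $\gamma$ and $\beta$ that do not depend on $d$ or $n$, then invoke Theorem \ref{thm:lb_low_d}. Because $d = o(\log n)$, the factor $e^{-7\beta d}$ appearing in that bound will be $n^{-o(1)}$, which matches the claimed $n^{-o(1)}$ loss in the exponent.

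For assumption \ref{cond:i_pos_low_dim}, I would take $\enorm = \norm{\,\cdot\,}_1$ (the natural norm for the multivariate Laplace distribution) and $R_d = 2d$. Writing $\norm{X_1}_1 = \sum_{i=1}^d |X_1[i]|$ as a sum of $d$ i.i.d.\ standard exponentials, we have $\Ex{\norm{X_1}_1} = d$, so Markov's inequality gives $\idist(B_{\norm{\,\cdot\,}_1}(0,2d)) \geq 1/2$ and we can set $\gamma = 1/2$. The Laplace density $f_\idist(x) = 2^{-d}\, e^{-\norm{x}_1}$ is log-affine in $\norm{x}_1$, so the essential density ratio over $B_{\norm{\,\cdot\,}_1}(0, 2d)$ is exactly $e^{2d}$, and assumption \ref{cond:i_pos_low_dim} holds with $\beta = 2$.

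For assumption \ref{cond:noise_low_dim}, I would use the log-Lipschitz hypothesis pointwise: $f_\ndist(x-v) \geq e^{-L\norm{v}_1} f_\ndist(x)$ for all $x, v$, which implies $\min(f_\ndist(x-v), f_\ndist(x)) \geq e^{-L\norm{v}_1} f_\ndist(x)$ and, after integrating, $\tvnorm{\ndist(v) - \ndist} \leq 1 - e^{-L\norm{v}_1}$. Combining with the product-coupling inequality
\begin{equation*}
1 - \tvnorm{\mathcal{Q}_1 \otimes \mathcal{Q}_2 - \mathcal{Q}_1' \otimes \mathcal{Q}_2'} \geq (1 - \tvnorm{\mathcal{Q}_1 - \mathcal{Q}_1'})(1 - \tvnorm{\mathcal{Q}_2 - \mathcal{Q}_2'})
\end{equation*}
used earlier in the paper, I would obtain, for $\norm{v}_1 < R_d = 2d$,
\begin{equation*}
\tvnorm{\ndist(v) \otimes \ndist(-v) - \ndist \otimes \ndist} \leq 1 - e^{-4Ld},
\end{equation*}
so assumption \ref{cond:noise_low_dim} is satisfied with $\beta = 4L$. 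A single choice $\beta = \max(2, 4L, \log 2)$ handles both assumptions simultaneously.

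Applying Theorem \ref{thm:lb_low_d} then yields $\inf_{\hpi} \Ex{\gm} \geq (1/128)\, e^{-7\beta d}\left((n^2 \sigma^d) \wedge n\right)$, and since $\beta$ is an absolute constant and $d = o(\log n)$ gives $e^{-7\beta d} = n^{-o(1)}$, the claim follows. The one delicate point is ensuring $\beta$ stays bounded in $d$ and $n$: this works because the radius $R_d = 2d$, which is forced by the need to keep $\gamma$ bounded away from $0$ for the Laplace law, costs only $O(d)$ in the log-density variation over the ball, so $\beta$ remains constant. For a prior whose log-density grows faster than linearly along the natural radii needed to contain a fixed fraction of the mass, this balance would be the main obstacle; here the matching of Laplace's $1$-Lipschitz log-density with a linear $R_d$ makes it immediate.
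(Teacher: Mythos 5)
Your proposal is correct and follows the same overall structure as the paper's proof: verify the two assumptions of Model LD with $\enorm = \|\cdot\|_1$, $R_d = 2d$, $\gamma = 1/2$, and a $\beta$ depending only on $L$, then apply \Cref{thm:lb_low_d} and absorb $e^{-7\beta d}$ into the $n^{o(1)}$ factor. The one place you diverge is in bounding the total variation distance for assumption (b): the paper uses the KL chain rule on the log-Lipschitz hypothesis together with the Bretagnolle--Huber inequality to get $\tvnorm{\ndist(v)\otimes\ndist(-v)-\ndist\otimes\ndist} \leq 1 - \tfrac{1}{2}e^{-2L\|v\|_1}$, giving $\beta = 4L + \log 2$; you instead use the pointwise bound $f_\ndist(x-v)\geq e^{-L\|v\|_1}f_\ndist(x)$ to lower-bound $\int\min(f_\ndist(\cdot-v),f_\ndist)$ directly, yielding $\tvnorm{\ndist(v)-\ndist}\leq 1-e^{-L\|v\|_1}$, and then apply the product-coupling inequality to get $\beta = 4L$. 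Your route is more elementary (it avoids Bretagnolle--Huber entirely and the detour through KL), gives a marginally smaller constant, and holds for all $d$ rather than only sufficiently large $d$; the paper's route via KL divergence is a standard reflex that generalizes more easily when only a KL (rather than pointwise) control is available. Both are valid and both immediately specialize to the same final rate.
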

\begin{proof}

Note that $|X_1[j]| \sim \text{Exp}(1)$ and so $\norm{X_1}_1$ is sub-Exponential with $\Ex{\norm{X_1}_1} = d$. It follows that condition \ref{cond:i_pos_low_dim} from \Cref{mod:low_dim} hold for sufficiently large $d$ taking $R_d = 2d, \gamma = \frac{1}{2}$ and $\beta = 2$. Since $\ndist$ has a continuous density $f_\ndist$ such that $\log(f_\ndist)$ is $L$-Lipschitz on the $\norm{\,\cdot\,}_1$ norm, we have that:
\begin{equation}
\begin{split}
    D_{KL}(\ndist \otimes \ndist || \ndist(v) \otimes \ndist(-v)) &= D_{KL}(\ndist || \ndist(v)) + D_{KL}(\ndist || \ndist(-v)) \\
    &= \int_{\R^d} (\log(f_\ndist(z)) - \log(f_\ndist(z+v))) f_\ndist(z) dz \\
    &+ \int_{\R^d} (\log(f_\ndist(z)) - \log(f_\ndist(z-v))) f_\ndist(z) dz \\
    &\leq 2L\lone{v}. 
\end{split}
\end{equation}
It follows by the Bretagnolle–Huber inequality \cite{brethuber} that,
\begin{equation}
    \norm{ \ndist(v) \otimes \ndist(-v) - \ndist \otimes \ndist}_{TV} \leq 1 - \frac{1}{2} \expo{-2L\lone{v}},
\end{equation}
and so condition \ref{cond:noise_low_dim} holds taking $\beta = 4L + \log(2)$. So \Cref{thm:lb_low_d} holds and we have a lower bound of order $\Omega\left((n^{(2-o(1))} \sigma^d) \wedge n^{(1-o(1))}\right)$.
\end{proof}

A natural choice for $\ndist$ in this case would be also a standard Laplace distribution. One might wonder if in this case the order of the expected number of mistakes obtained for $\lss$ is also nearly sharp, as $\ndist$ fails to be sub-Gaussian but it still is sub-Exponential. \bluenew{As we discuss on \Cref{subsec:other_est} and \Cref{a:other_estim}, we believe both $\lss$ and $\gdpi$ to underperform on this case, and methods such as the MLE or based on other distances to achieve better rates, matching the stated lower bound.}

\section{Lower bounds via matchings in Random Geometric Graphs}
\label{sec:lower_bounds_rggs}

In this section we prove \Cref{thm:lb_low_d} and \Cref{lem:matching_exp}. We first introduce some notation for this section. Given a finite collection of points $\mathcal{X}_n = \{x_1, \dots, x_n\}  \subset \R^d, r > 0$ and a norm $\enorm$ on $\R^d$, we define a geometric graph as a graph $G(n,r,d, \enorm) = G(\mathcal{X}_n,r,d, \enorm)$ with,
\begin{equation}
    V(G(n,r,d, \enorm)) = [n] \text{ and } E(G(n,r,d, \enorm)) = \{ \{i,j\} \subseteq [n]; ~ i \neq j \text{ and } \norm{x_i - x_j} < r \}.
\end{equation}
Following the convention of the literature, we leave the dependence of $r$ and $d$ on $n$ implicit.
If we take:
\begin{equation}
    \mathcal{X}_n = \{ X_1, \dots, X_n \} \text{ with } X_1, \dots, X_n \iid \idist,
\end{equation}
 where $\idist$ is a distribution in $\R^d$, we call the corresponding random graph $G(n,r,d,\enorm)$ a random geometric graph (RGG). As noted previously, asymptotic properties of random geometric graphs have been previously explored in several works, see for instance the book \cite{rggpenrose} for results in fixed dimension and \cite{devroye2011high} for the high-dimensional regime (\ie $d=\omega(\log n)$).

Recall that a matching in a graph $G$ is a subset of edges $M \subset E(G)$ with no vertices in common, \ie such that $e \cap e' = \emptyset$ for any pair of distinct $e, e'$ from $M$. We let $M_r = M_r(\mathcal{X}_n)$ denote the largest matching on $G(\mathcal{X}_n,r,d,\enorm)$.

\subsection{A general lower bound}

The next result gives us a general method to move from large matchings in $G(n,r,d,\enorm)$ to lower bounds on $\inf_{\hpi}\Ex{\gm}$.
For a distribution $\ndist$ over $\R^d$, and a vector $v \in \R^d$, let $\ndist(v)$ denote the law of $\nnoise + v$ where $\nnoise \sim \ndist$ and set:
\begin{equation}
\label{eq:t_def}
    t_s(\ndist, \enorm) = t_s(\ndist) := \left(1 - \sup_{\|v\| < s}  \norm{ \ndist(v) \otimes \ndist(-v) - \ndist \otimes \ndist}_{TV}\right).
\end{equation}
Our lower bound is as follows:
\begin{lemma}
\label{lem:gen_inf_theo_lb} Let $x_1,\dots,x_n\in \R^d$ be distinct. Given $\sigma>0$ set,
\begin{equation}
\label{eq:det_pos_model}
    Y_i=x_{\tpi(i)}+\sigma \nnoise_{\tpi(i)}, ~ i\in [n],
\end{equation}
where $\tpi\sim {\rm Unif}(S_n)$ is generated uniformly at random and (independently) $\nnoise_1,\dots,\nnoise_n\iid \ndist $, for $\ndist$ a distribution over $\R^d$. Then, for any estimator,
\begin{equation}
    \hpi = \hpi(x_1,\dots x_n,Y_1,\dots,Y_n)\in S_n,
\end{equation}
the following holds:
\begin{equation}
\label{eq:gen_inf_lb}
    \Ex{\hamdist{\hpi}{\tpi}} \geq \frac{|M_r|t_{\frac{r}{\sigma}}(\ndist)}{2},
\end{equation}
for $t_{s}(\ndist)$ as in \eqref{eq:t_def} above and $M_r = M_r(\{x_1,\dots,x_n\})$ the largest matching in $G(\{x_1, \dots, x_n \}, r, d, \enorm)$.
\end{lemma}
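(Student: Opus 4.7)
The plan is to apply a two-point Le~Cam argument edge by edge on the matching $M_r$. Fix any edge $\{k,\ell\}\in M_r$, so $\|x_k-x_\ell\|<r$, and pair each $\pi\in S_n$ with $\pi':=\tau_{k\ell}\circ\pi$, where $\tau_{k\ell}\in S_n$ transposes $k$ and $\ell$. Under the model \eqref{eq:det_pos_model}, the joint laws $P_{\pi}$ and $P_{\pi'}$ of $(Y_1,\dots,Y_n)$ agree on every coordinate $m\notin\{i,j\}$, with $i:=\pi^{-1}(k)$ and $j:=\pi^{-1}(\ell)$, and on those two coordinates differ only by swapping the two ``centers'' $x_k$ and $x_\ell$ in the marginals. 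After translating by $x_\ell$ and rescaling by $1/\sigma$, the total variation reduces to $\|\ndist(v)\otimes\ndist-\ndist\otimes\ndist(v)\|_{TV}$ with $v=(x_k-x_\ell)/\sigma$ and $\|v\|<r/\sigma$. The change of variables $z_1\mapsto z_1+v$ in the integral representation of this TV recasts it as $\|\ndist(v)\otimes\ndist(-v)-\ndist\otimes\ndist\|_{TV}$, which is bounded by $1-t_{r/\sigma}(\ndist)$ by definition \eqref{eq:t_def}.

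The second step applies a standard coordinate-wise two-point testing inequality. On coordinate $i$ the values $\pi(i)=k$ and $\pi'(i)=\ell$ compete, giving
\[
P_{\pi}(\hpi(i)\neq k)+P_{\pi'}(\hpi(i)\neq \ell)\;\geq\;1-\|P_{\pi}-P_{\pi'}\|_{TV},
\]
and an analogous inequality holds at coordinate $j$. Summing these two bounds and averaging over the pairing orbit $\{\pi,\pi'\}$ (which is consistent with $\tpi\sim\mathrm{Unif}(S_n)$) yields that the expected number of mistakes of $\hpi$ at the two coordinates $\{i,j\}$ is at least $1-\|P_{\pi}-P_{\pi'}\|_{TV}\geq t_{r/\sigma}(\ndist)$.

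To conclude, I would use that $M_r$ is a matching in $G(\{x_1,\dots,x_n\},r,d,\enorm)$, so distinct edges in $M_r$ are vertex-disjoint; hence for any $\tpi$ the preimages $\tpi^{-1}(\{k,\ell\})$ form pairwise disjoint pairs in $[n]$ as $\{k,\ell\}$ ranges over $M_r$. Therefore $\hamdist{\hpi}{\tpi}$ dominates the sum of the per-edge mistake counts, and summing the per-edge lower bounds gives $\Ex{\hamdist{\hpi}{\tpi}}\geq|M_r|\,t_{r/\sigma}(\ndist)$, from which the claimed lower bound $|M_r|\,t_{r/\sigma}(\ndist)/2$ follows \emph{a fortiori}.

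The main subtlety I expect is the algebraic identity between $\|\ndist(v)\otimes\ndist-\ndist\otimes\ndist(v)\|_{TV}$ and $\|\ndist(v)\otimes\ndist(-v)-\ndist\otimes\ndist\|_{TV}$: it is precisely this change of variables that makes the Le~Cam TV bound line up with the definition of $t_s(\ndist)$ without losing a factor of $2$ through a triangle inequality routed via $\ndist\otimes\ndist$. Careful bookkeeping of the orbit averaging, and of the fact that $\hpi(i)$ may take values outside $\{k,\ell\}$ (in which case it is a mistake under both hypotheses), is otherwise routine.
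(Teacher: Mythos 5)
Your argument is correct, and it takes a genuinely different route from the paper's. The paper restricts attention to the subfamily $S_\Omega \subset S_n$ of permutations generated by the matching's transpositions, applies Assouad's Lemma to the derived bit-string estimator $\hat\omega^{(\hpi)}$, and then invokes the group-invariance identity of \Cref{lem:minimax_bayes} (Appendix~\ref{a:estim_group}) to upgrade the resulting minimax lower bound to a Bayes-risk lower bound over all of $S_n$. You instead work directly in $S_n$: for each matching edge $\{k,\ell\}$ you pair each $\pi$ with $\tau_{k\ell}\circ\pi$, run Le Cam's two-point testing inequality \emph{twice per orbit} (once at each of the two affected coordinates $i=\pi^{-1}(k)$, $j=\pi^{-1}(\ell)$), and average over the orbits of size two, which partition $S_n$ and are equidistributed under $\tpi\sim\mathrm{Unif}(S_n)$. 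This bypasses both Assouad's Lemma and the minimax-to-Bayes reduction, and since you count both coordinate errors rather than collapsing each edge into a single binary test, you in fact obtain $\Ex{\hamdist{\hpi}{\tpi}} \geq |M_r|\, t_{r/\sigma}(\ndist)$, a factor of $2$ sharper than the stated bound (which then follows a fortiori). The paper loses that factor precisely at the inequality $\hamdist{\hpi}{\pi_\omega}\geq\hamdist{\hat\omega^{(\hpi)}}{\omega}$, where two coordinate mistakes can collapse into a single bit disagreement. Finally, the algebraic point you flag is right and matches what the paper does with a slightly different symmetrizing translation: shifting one coordinate of $\ndist(v)\otimes\ndist-\ndist\otimes\ndist(v)$ gives $\ndist(-v)\otimes\ndist(v)-\ndist\otimes\ndist$, whose total variation is controlled by $1-t_{r/\sigma}(\ndist)$ since the supremum in \eqref{eq:t_def} is over the symmetric ball $\|v\|<r/\sigma$.
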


The Lemma above gives a lower bound which takes into account two effects: $|M_r|$ accounts for pairs of close initial positions, while $t_{\frac{r}{\sigma}}(\ndist)$ accounts for the difficulty of distinguishing their associated noisy versions. The free parameter $r$ controls the trade-off between these two quantities: as $r$ increases we have more matched initial positions but their noisy versions become more distinguishable. 

\begin{proof}[Proof of \Cref{lem:gen_inf_theo_lb}]
Given $v \in \R^d$ and $\sigma > 0$ we let $\ndist(v, \sigma)$ denote the law of $v + \sigma\nnoise$ for $\nnoise \sim \ndist$. In particular, $\ndist(v) = \ndist(v,1)$.
We assume for ease of notation that $M_r = \{ \{ 2j-1,2j \} \}_{j=1}^m$, but the general case follows analogously. Letting $\Omega = \{0, 1 \}^m$, we associate to each element of $\Omega$ a permutation $\pi_{\omega} \in S_n$ given by:

\begin{equation}
    \bluenew{\pi_{\omega} = (1 ~ 2)^{\omega_1} ~ (3 ~ 4)^{\omega_2} ~ \cdots ~ ((2m-1) ~ 2m)^{\omega_m}.}
\end{equation}

In other words, $\pi_\omega$ swaps $(2j-1)$ and $2j$ if $\omega_j = 1$ and does not swap them if $\omega_j = 0$. All other indices are left unchanged. Denote by $S_\Omega \subseteq S_n$ the subset of permutations of this form. We associate to each $\omega \in \Omega$ a probability measure $P_{\omega}$ on $(\R^d)^{n}$ given by:
\begin{equation}
    P_{\omega} = \left(\bigotimes_{j=1}^m \left( \ndist\left(x_{\pi_\omega(2j-1)},\sigma\right) \otimes \ndist\left(x_{\pi_\omega(2j)},\sigma\right)\right)\right) \otimes \bigotimes_{k=2m+1}^n \ndist\left(x_k,\sigma\right). 
\end{equation}
That is, letting $Y_1, \dots, Y_n$ be random vectors with joint distribution $P_{\omega}$, we have that for $j \in [m]$, if $\omega_j = 0$ then,
\begin{equation}
    Y_{2j-1} = x_{2j-1} + \sigma \nnoise_{2j-1}, \  Y_{2j} = x_{2j} + \sigma \nnoise_{2j},
\end{equation}
if $\omega_j = 1$ then,
\begin{equation}
    Y_{2j-1} = x_{2j} + \sigma \nnoise_{2j-1}, \  Y_{2j} = x_{2j-1} + \sigma \nnoise_{2j},
\end{equation}
and for $k = 2m+1, \dots, n$,
\begin{equation}
    Y_k = x_k + \sigma \nnoise_k,
\end{equation}
where $\nnoise_1, \dots, \nnoise_n \iid \ndist$. Note that if $\omega$ and $\omega'$ differ by a single coordinate $j$ then:
\begin{equation}
\begin{split}
\label{eq:tv_bound}
    \norm{ P_{\omega} - P_{\omega'} }_{TV} &= \norm{ \ndist\left(x_{\pi_\omega(2j-1)},\sigma\right) \otimes \ndist\left(x_{\pi_\omega(2j)}, \sigma \right) - \ndist\left(x_{\pi_{\omega'}(2j-1)},\sigma\right) \otimes \ndist\left(x_{\pi_{\omega'}(2j)}, \sigma \right)}_{TV} \\
    &= \norm{ \ndist\left( \frac{x_{\pi_\omega(2j-1)}}{\sigma} \right) \otimes \ndist\left( \frac{x_{\pi_\omega(2j)}}{\sigma} \right) - \ndist\left( \frac{x_{\pi_{\omega'}(2j-1)}}{\sigma} \right) \otimes \ndist\left( \frac{x_{\pi_{\omega'}(2j)}}{\sigma} \right)}_{TV} \\
    &\leq \sup_{ \norm{v} < \frac{r}{\sigma}} \norm{\ndist(v) \otimes \ndist(-v) - \ndist\otimes \ndist}_{TV} = 1- t_{\frac{r}{\sigma}}(\ndist)
\end{split}
\end{equation}
where the second equality and third inequality follow since the total variation distance is invariant under translations and dilations respectively and in the last inequality we also used the fact that $\| x_{2j} - x_{2j-1} \| < r$.
Suppose we observe $x_1, \dots, x_n \in \R^d$ and $Y_1, \dots, Y_n \sim P_{\omega}$ from an unknown $\omega$. Given $\hat{\omega}$ an estimator for $\omega$ based on $x_1, \dots, x_n$ and $Y_1, \dots, Y_n$, it follows by Assouad's Lemma \cite{tsybakov2008introduction} and \eqref{eq:tv_bound} that:
\begin{equation}
    \inf_{\hat{\omega}} \max_{\omega \in \Omega} \Exs{\omega}{ \hamdist{\hat{\omega}}{\omega} } \geq \frac{m t_{\frac{r}{\sigma}}(\ndist)}{2},
\end{equation}
where $\Exs{\omega}{\, \cdot \, }$ denotes expectations under $P_{\omega}$ and the infimum is taken over all (possibly non-deterministic) estimators $\hat{\omega}$. 

We can convert any estimator $\hat{\pi}$ for a hidden permutation $\tpi$ based on $x_1, \dots, x_n$ and $Y_1, \dots, Y_n$ into an estimator $\hat{\omega}^{(\hpi)}$ for $\omega$ by setting:
\begin{equation}
\hat{\omega}^{(\hpi)}_j =
    \begin{cases}
         1, ~ \text{if $\hat{\pi}(2j-1) = 2j$ and $\hat{\pi}(2j) = 2j-1$} \\
         0, ~ \text{if $\hat{\pi}(2j-1) = 2j-1$ and $\hat{\pi}(2j) = 2j$} \\
         \epsilon, ~ \text{otherwise}
    \end{cases},
\end{equation}
where $\epsilon$ indicates an error regardless of the true $\omega_j$. For any $\omega \in \Omega$, $\hat{\omega}^{(\hpi)}_j \neq \omega_j$ implies that $\hat{\pi}(2j) \neq \pi_{\omega}(2j)$ or $\hat{\pi}(2j-1) \neq \pi_{\omega}(2j-1)$. It follows that:
\begin{equation}
\begin{split}
    \hamdist{\hat{\pi}}{\pi_{\omega}} &= \sum_{i=1}^n \indic{\hat{\pi}(i) \neq \pi_{\omega}(i)} \\
    &\geq \sum_{j=1}^m \indic{\hat{\pi}(2j) \neq \pi_{\omega}(2j)} + \indic{\hat{\pi}_{\omega}(2j-1) \neq \pi_{\omega}(2j-1)} \\
    &\geq \hamdist{\hat{\omega}^{(\hpi)}}{\omega},
\end{split}
\end{equation}
and so,
\begin{equation}
    \inf_{\hat{\pi}} \max_{\pi_\omega \in S_\Omega} \Ex{ \hamdist{\hpi}{\pi_\omega} | \tpi = \pi_\omega} \geq \frac{m t_{\frac{r}{\sigma}}(\ndist)}{2},
\end{equation}
where expectations are taken with respect to the model in \eqref{eq:det_pos_model}.
    Since $S_\Omega \subseteq S_n$,
\begin{equation}
\label{eq:assouad_sn}
    \inf_{\hpi} \max_{\pi \in S_n} \Ex{ \hamdist{\hpi}{\pi} | \tpi = \pi} \geq \frac{m t_{\frac{r}{\sigma}}(\ndist)}{2}.
\end{equation}
This proves the minimax lower bound. Note that we could take $\hpi$ to be a non-deterministic estimator and the argument still follows. Also, due to the structure of the problem, we always have that:
\begin{equation}
\label{eq:minimax_bayes_id}
    \inf_{\hpi} \max_{\pi \in S_n} \Ex{ \hamdist{\hpi}{\pi} | \tpi = \pi} = \inf_{\hpi} \Ex{ \hamdist{\hpi}{\pi^*}},
\end{equation}
where the infimum is taken over all (possibly non-deterministic) estimators $\hpi$. We provide a proof of \eqref{eq:minimax_bayes_id} on \Cref{a:estim_group} in a more general setting. From \eqref{eq:minimax_bayes_id} the result follows. 
\end{proof}

\subsection{Matchings in random geometric graphs}
\label{sec:largest_matching_rgg}

Our next step to use \Cref{lem:gen_inf_theo_lb} to prove \Cref{thm:lb_low_d} is proving \Cref{lem:matching_exp}. As in the Introduction, we consider a random geometric graph $G(n,r,d, \enorm) = G(\mathcal{X}_n,r,d, \enorm)$ induced by $\mathcal{X}_n = \{X_1, \dots, X_n \}$, where $X_1, \dots, X_n \iid \idist$ a distribution with density $f_\idist$ satisfying assumption \ref{cond:i_pos_low_dim} from \Cref{mod:low_dim}. The idea of the proof is to build in a matching in $G(n,r,d, \enorm)$ by considering a collection of non-overlapping balls of radius $\frac{r}{2}$ inside $B_{\enorm}\left(0,R_d\right)$ and considering the edges between pairs of points that fall inside the same ball.

\begin{proof}[Proof of  \Cref{lem:matching_exp}]
Given $n \geq 3$ and $r > 0$, let
\begin{equation}
    l = \frac{2 R_d e^{-\beta}}{(n-1)^{\frac{1}{d}}}.
\end{equation}
Assume first that $r \leq l$, which implies in particular that $r < R_d$ since $\beta \geq \log(2)$. Let $B_d = B_{\enorm}\left(0,R_d\right) \subseteq \R^d$, which is such that $\idist\left( B_d \right) \geq \gamma > 0$ by assumption. Let $k_n$ be the maximum number of non-overlapping (open) balls of radius $\frac{r}{2}$ (on norm $\enorm$) entirely contained in $B_d$. Since $r<R_d$, $k_n$ is at least the size of the largest $r$-packing in $B_{\enorm}\left(0,\frac{R_d}{2}\right)$. Following the volumetric lower bound for the size of the $r$-packing number in \cite[Theorem 14.2]{wu_inf_stats}, we obtain
\begin{equation}
 \label{eq:bound_kn}
     \left(\frac{R_d}{2r}\right)^d \leq k_n.
 \end{equation}
Let $\mathcal{K}_n = \{B_{i,d}\}_{i=1}^{k_n}$ be a collection of $k_n$ non-overlapping balls of radius $\frac{r}{2}$ fully contained in $B_d$. Denote by $N = |\mathcal{X}_n \cap B_d|$ the number of samples from $\mathcal{X}_n = \{X_1, \dots, X_n\}$ that fall inside $B_d$, and define
\begin{equation}
    U_{i,d} = \sum_{j=1}^n \indic{X_j \in B_{i,d}} = \left|\mathcal{X}_n \cap B_{i,d}\right|, ~ i \in [k_n],
\end{equation}
the number of points from $\mathcal{X}_n$ that fall inside $B_{i,d}$. It is clear that, conditionally on $N$, $U_{i,d} \sim \Bin(N, p_{i,d})$, where $p_{i,d} = \Pr{X_1 \in B_{i,d}|X_1 \in B_d}$ for each $i \in [k_n]$. Moreover,
\begin{equation}
\label{eq:ball_prob_bound}
    e^{-\beta d} \left(\frac{r}{2R_d}\right)^d \leq p_{i,d} \leq e^{\beta d}\left(\frac{r}{2R_d}\right)^d,
\end{equation}
since $\frac{\text{ess sup}_{x \in B_d}f_\idist(x)}{\text{ess inf}_{x \in B_d}f_\idist(x)} \leq e^{\beta d}$ by assumption. Let
\begin{equation}
    I_n = \sum_{i=1}^{k_n} \indic{U_{i,d}=2},
\end{equation}
be the number of balls $B_{i,d}\in \mathcal{K}_n$ that have exactly $2$ elements from $\mathcal{X}_n$. Note that $|M_r| \geq I_n$ since we can build a matching in $G(n,r,d,\enorm)$ by considering the edges between each pair of points on the same $B_{i,d}$ such that $U_{i,d}=2$. Moreover, for $k\in \{0\}\cup[n]$,
\begin{equation}
\begin{split}
    \Pr{U_{i,d}=2|N=k} & =\binom{k}{2} p^2_{i,d}(1-p_{i,d})^{k-2} \\ &\geq \binom{k}{2} p^2_{i,d}(1-p_{i,d})^{n-2}  \\ 
    &\geq \binom{k}{2} \frac{e^{-2\beta d}}{4} \left(\frac{r}{2R_d}\right)^{2d},
\end{split}
\end{equation}
where we used \eqref{eq:ball_prob_bound} to bound $p_{i,d}$ above and below and also $e^{\beta d} \left( \frac{r}{2R_d}\right)^d \leq \frac{1}{n-1}$ by our assumption that $r \leq l$, and the fact that $(1-\frac{1}{(n-1)})^{n-1} \geq \frac{1}{4}$ if $n \geq 3$. It follows that,
\begin{equation}
\begin{split}
    \Pr{U_{i,d}=2} &= \Ex{\Pr{U_{i,d}=2|N}} \\
    &\geq \Ex{\binom{N}{2}} \frac{e^{-2\beta d}}{4} \left(\frac{r}{2R_d}\right)^{2d} \\
    &\geq \gamma^2 \binom{n}{2} \frac{e^{-2\beta d}}{4} \left(\frac{r}{2R_d}\right)^{2d},
\end{split}
\end{equation}
where in the last inequality we used the fact that $N \sim \Bin(n,\Pr{X_1 \in B_d})$ implies $\Ex{\binom{N}{2}} = (\Pr{X_1 \in B_d})^2 \binom{n}{2}$ and also $\Pr{X_1 \in B_d} \geq \gamma$ by assumption.
So,
\begin{equation}
\begin{split}
    \Ex{|M_r|} &\geq \Ex{I_n} \\
    &\geq k_n \gamma^2  \binom{n}{2} \frac{e^{-2\beta d}}{4} \left(\frac{r}{2R_d}\right)^{2d} \\
    &\geq \gamma^2  \binom{n}{2} \frac{e^{-2\beta d}}{4} \left(\frac{r}{8R_d}\right)^{d},
\end{split}
\end{equation}
where the last inequality follows from \eqref{eq:bound_kn}. If $r > l$, then $|M_r| \geq |M_l|$, and so,
\begin{equation}
\begin{split}
\Ex{|M_r|} &\geq \Ex{|M_l|} \\
&\geq \gamma^2 \binom{n}{2} \frac{e^{-2\beta d}}{4} \left(\frac{l}{8R_d}\right)^{d} \\
&= \gamma^2 \frac{n}{2} \frac{e^{-2\beta d}}{4} e^{-\beta d} 4^{-d} \\
&= \gamma^2 4^{-d} n \frac{e^{-3\beta d}}{8}.
\end{split}
\end{equation}
Since $\binom{n}{2} \geq \frac{n^2}{4}$ for all $n \geq 2$, it follows that for a general $r > 0$,
\begin{equation}
    \Ex{|M_r|} \geq \frac{\gamma^2}{16} e^{-d(3\beta+\log(8))} \left(n^2 \left( \frac{r}{R_d} \right)^d \wedge n \right), 
\end{equation}
and so the result follows.
\end{proof}

The size of the largest matching $M_r$ in $G(n,r,d, \enorm)$ cannot exceed $\lfloor n/2\rfloor$ or the total number of edges of $G(n,r,d,\enorm)$, $|E(G(n,r,d,\enorm))|$. This implies that:
\begin{equation}
    \Ex{|M_r|}\leq \Ex{|E(G(n,r,d, \enorm))|} \wedge \left\lfloor \frac{n}{2}\right\rfloor.
\end{equation}
A simple calculation shows that this implies:
\begin{equation}
    \Ex{|M_r|}\leq \left(\linf{f_\idist} \text{Vol}(B_{\enorm}(0,1)) \,r^d\binom{n}{2}\right)\wedge\left\lfloor \frac{n}{2}\right\rfloor.
\end{equation}
It follows that for fixed $d$, the order from \Cref{lem:matching_exp} is sharp. 

\subsection{Minimax lower bound}\label{sub:minimaxlowerbound}

We now prove \Cref{thm:lb_low_d}.
\begin{proof}[Proof of \Cref{thm:lb_low_d}]
    Under \Cref{mod:low_dim}, using our assumptions on $\idist$ and $\ndist$ and applying \Cref{lem:matching_exp} to \Cref{lem:gen_inf_theo_lb} with $r = R_d \sigma$, we have that whenever $n \geq 3$,
    \begin{equation}
        \inf_{\hpi}\Ex{\gm} \geq \frac{\gamma^2}{32} e^{-7\beta d} \left( (n^2 \sigma^d) \wedge n \right).
    \end{equation}
\end{proof}

\section{Upper bounds}
\label{sec:up_bounds}

In this section we provide the technical results necessary to prove the upper bounds in \Cref{thm:lss_up_low_d}, \Cref{thm:lss_up_fix_d}, \Cref{thm:high_dim_lss} and \Cref{thm:high_dim_lssc}.

We first note that conditioned on $\tpi = \pi$, for some fixed $\pi \in S_n$, the law of $\lssm$ is the same regardless of the choice of $\pi$ \cite{geo_match_2022}. So in this section, unless stated otherwise, we assume without loss of generality that $\tpi = \text{Id}$. Under this condition we will consider a $t$-cycle $C_t = (i_1, \dots, i_k)$ to be augmenting for $\lss$ if and only if:
\begin{equation}
    \sum_{k=1}^t \langle X_{i_k}, Y_{i_{k+1}} \rangle \geq \sum_{k=1}^t \langle X_{i_k}, Y_{i_k} \rangle.
\end{equation}

The main content of the Lemmata we use to prove our upper bounds are bounds on the probability that $t$-cycles are augmenting. Taking expectations on \eqref{eq:det_bound_lss} we can use these bounds to bound $\Ex{\lssm}$.

\subsection{Upper bounds for low-dimension}
\label{sec:up_bound_ld}

Recall that a random variable $U$ is sub-Gaussian \cite{vershynin_2018} if $\snorm{ U } < +\infty$.
Moreover, a random vector $V$ in $\R^d$ is sub-Gaussian if:
\begin{equation}
    \snorm{ V } := \sup_{\|v\|=1} \snorm{ \langle V, v \rangle } < + \infty.
\end{equation}
We have the following bound on the probability that a given $t$-cycle is augmenting:
\begin{lemma}
\label{lem:aug_up_bound_fix_d} Under \modlds \hspace{-0.75em} , for any $t$-cycle $C_t$,
    \begin{equation}
        \Pr{ C_t \text{ is augmenting}} \leq 2\left( K \sigma \right)^{d(t-1)},
    \end{equation}
    for $K = b_{\mathrm{LD}} K_\ndist^2 e^{2\beta}$ where $b_{\mathrm{LD}} > 0$ is an absolute constant.
\end{lemma}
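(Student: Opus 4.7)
The plan is to reduce the augmenting-cycle inequality to a one-sided tail event for a conditionally sub-Gaussian linear functional of the noise, and then to evaluate the resulting expectation by a Gaussian-type integral after a change of variables.

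First, without loss of generality take $\tpi = \mathrm{Id}$. Substitute $Y_i = X_i + \sigma \nnoise_i$ into the augmenting condition, apply the polarization identity $2\langle a, b\rangle = \|a\|^2 + \|b\|^2 - \|a-b\|^2$ to the $XX$-terms, and reindex cyclically (that is, $\sum_k\langle X_{i_k},\nnoise_{i_{k+1}}-\nnoise_{i_k}\rangle = -\sum_k\langle V_k,\nnoise_{i_{k+1}}\rangle$) to handle the $XZ$-terms. This rearranges the condition into
\begin{equation*}
\sum_{k=1}^t \|V_k\|^2 \le -2\sigma \sum_{k=1}^t \langle V_k, \nnoise_{i_{k+1}}\rangle, \qquad V_k := X_{i_{k+1}} - X_{i_k} \text{ (cyclic indices)}.
\end{equation*}

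Second, condition on $(X_{i_k})_{k=1}^t$. Because the indices $i_1,\dots,i_t$ are distinct, the noises $\nnoise_{i_{k+1}}$ are independent, and by hypothesis (d) of \modlds each summand $\langle V_k,\nnoise_{i_{k+1}}\rangle$ is mean-zero sub-Gaussian with $\snorm{\langle V_k,\nnoise_{i_{k+1}}\rangle}\le K_\ndist\|V_k\|$. Hence $W := \sum_k\langle V_k,\nnoise_{i_{k+1}}\rangle$ is conditionally sub-Gaussian with $\snorm{W}^2 \le c_1 K_\ndist^2 \sum_k\|V_k\|^2$, and the one-sided sub-Gaussian tail bound at level $s = \tfrac{1}{2\sigma}\sum_k\|V_k\|^2$ gives
\begin{equation*}
\Pr{C_t \text{ augmenting}\mid X} \le \exp\!\left(-\lambda\sum_{k=1}^t\|V_k\|^2\right), \qquad \lambda := \frac{c_2}{K_\ndist^2\sigma^2}.
\end{equation*}

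Third, take expectation in $X$ and evaluate via the change of variables $(X_{i_1},\dots,X_{i_t})\mapsto(X_{i_1},v_1,\dots,v_{t-1})$ with $v_k = X_{i_{k+1}}-X_{i_k}$, so $V_t = -(v_1+\cdots+v_{t-1})$ and the Jacobian equals $1$. Bound $t-1$ of the density factors by $\linf{f_\idist}$ and integrate out $X_{i_1}$ against $f_\idist$ (which gives $1$), reducing everything to the Gaussian integral $\int \exp(-\lambda v^{\top}(M\otimes I_d)v)\,dv = (\pi/\lambda)^{d(t-1)/2}\,t^{-d/2}$, where $M = I_{t-1}+\mathbf{1}\mathbf{1}^{\top}$ has determinant $t$.

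Finally, substituting back for $\lambda$ produces a bound of the form $t^{-d/2}(c_3\,\linf{f_\idist}^{1/d} K_\ndist\sigma)^{d(t-1)}$. Invoking assumption (c) of \modlds, i.e.\ $\linf{f_\idist}^{1/d}\le e^{\beta}$, I would rewrite the natural quantity $\linf{f_\idist}^{1/d} K_\ndist$ as $\le \linf{f_\idist}^{2/d}K_\ndist^2 e^{2\beta}$ times an absolute constant, which yields the claimed $K = b_\mathrm{LD}\linf{f_\idist}^{2/d}K_\ndist^2 e^{2\beta}$, while absorbing $t^{-d/2}\le 1$ into the explicit factor $2$. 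The main obstacle is precisely this last bookkeeping step: the integral only outputs a single power of $\linf{f_\idist}^{1/d}$ and a single power of $K_\ndist$, so to land on the stated form one has to use the LD+S density bound and monotonicity carefully, choosing $b_\mathrm{LD}$ large enough to absorb the Gaussian-integral constants and the determinant factor $t^{-d/2}$ uniformly in $t$.
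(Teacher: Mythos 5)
Your proposal is correct and follows essentially the same route as the paper: rewrite the augmenting condition in terms of cyclic differences of the $X$'s, condition on those differences and apply the sub-Gaussian tail bound to the noise linear functional, then integrate out the differences using the $\linf{f_\idist}$ density bound and a Gaussian integral. The one genuine (and harmless) variation is in the last step. You keep all $t$ terms $\sum_k\|V_k\|^2$ and therefore must integrate $e^{-\lambda v^\top(M\otimes I_d)v}$ with $M=I_{t-1}+\mathbf{1}\mathbf{1}^\top$ and $\det M = t$, which yields the extra $t^{-d/2}\le 1$ factor. The paper instead simply discards the nonnegative term $\|\Delta_t\|^2$ before integrating, which turns the integral into a product of $t-1$ independent $d$-dimensional Gaussian integrals and avoids any determinant computation; it loses the $t^{-d/2}$ but does not need it. Both give a bound of the form $2\bigl(c\,\linf{f_\idist}^{1/d}K_\ndist\sigma\bigr)^{d(t-1)}$.

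Two small points. First, the factor $2$ in the conclusion comes from the two-sided sub-Gaussian tail bound; you drop it in your conditional display and then speak of ``absorbing $t^{-d/2}\le 1$ into the $2$,'' which is not where the $2$ comes from — just keep the $2$ from the tail bound. Second, you correctly flag that the integral naturally produces $K \asymp \linf{f_\idist}^{1/d}K_\ndist$ rather than the quadratic expression $\linf{f_\idist}^{2/d}K_\ndist^2 e^{2\beta}$ appearing in the lemma statement; the paper's proof likewise produces the linear expression and then claims the stated $K$ with $b_{\mathrm{LD}}=4\pi\kappa$, so this discrepancy is in the paper's own bookkeeping, not a gap in your argument.
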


\bluenew{The proof follows very similar lines as \cite{geo_match_2022}. Both rely on bounding an expectation of the form $\Ex{e^{-\ltwo{Q_t \ubar{X}}^2}}$, where $\ubar{X} \in \R^{t \times d}$ is a concatenation of the first $t$ initial positions, and $Q_t$ is a square matrix of rank $(t-1) \times d$. The Gaussian assumptions in \cite{geo_match_2022} allow the authors to carry out explicit computations by diagonalizing the quadratic form and using the rotation invariance of the standard Gaussian. We instead use a bound of the form $\Ex{e^{-\ltwo{Q_t \ubar{X}}^2}} \leq \Ex{e^{-\ltwo{\tilde{Q}_t \ubar{\tilde{X}}}^2}}$, where $\ubar{\tilde{X}} \in \R^{(t-1) \times d}$ is the image of $\ubar{X}$ under a (full rank) linear transformation and $\tilde{Q}_t$ is invertible. This allows us to carry out computations in terms of the density of $\ubar{\tilde{X}}$ and obtain a final bound in terms of a Gaussian integral in dimension $(t-1) \times d$.}

\begin{proof}
Let $\Delta_1 := X_t - X_1$, $\Delta_i := X_{i-1} - X_i, ~ i = 2, \dots, t$, $\ubar{Z} \in \R^{t \times d}$ be a matrix with entries $\ubar{Z}[i,j] = Z_i[j]$, \ie the matrix whose rows are given by $Z_i$, and define the matrices $\ubar{\nnoise}, ~ \ubar{\Delta}, ~ \ubar{X} \in \R^{t \times d}$ analogously. We first assume that $C_t = (1,2,\ldots,t)$. It is easy to see that $C_t$ will be augmenting if and only if,
\begin{equation}
\label{eq:aug_cycle_lss}
\sum_{i=1}^t \langle \Delta_i, Z_i \rangle \geq \frac{1}{2} \left(\sum_{i=1}^t \lVert \Delta_i \rVert_2^2  \right).
\end{equation}
This is equivalent to,
\begin{equation}
\langle \ubar{Z}, \ubar{\Delta} \rangle \geq \frac{1}{2} \| \ubar{\Delta} \|_2^2,
\end{equation}
and,
\begin{equation}
\left\langle \ubar{\nnoise}, \frac{\ubar{\Delta}}{\ltwo{ \ubar{\Delta} }} \right\rangle \geq \frac{1}{2} \frac{\ltwo{\ubar{\Delta}}}{\sigma}.
\end{equation}
This last step is valid since $\Pr{\ltwo{ \ubar{\Delta} } = 0} = 0$, as $\idist$ has no atoms, so we can also assume that $\ltwo{ \ubar{\Delta} } \neq 0$.
Let $v_1, \dots, v_t \in \R^d$ and $\ubar{v} \in \R^{t \times d}$ with $\ubar{v}[i,j] = v_i[j]$ and $\ltwo{\ubar{v}} = 1$. We have that for some absolute constant $\kappa > 0$ and $K_\ndist$,
\begin{equation}
\begin{split}
 \snorm{\left\langle \ubar{\nnoise}, \ubar{v} \right\rangle}^2 &=  \snorm{ \sum_{i=1}^t \sum_{j=1}^d \ubar{\nnoise}[i,j] \ubar{v}[i,j]}^2 \\
 &\leq \kappa \sum_{i=1}^t \snorm{ \sum_{j=1}^d  \nnoise_i[j] v_i[j]  }^2 \\
 &= \kappa \sum_{i=1}^t \indic{\ltwo{v_i} \neq 0} \ltwo{v_i}^2 \snorm{ \left\langle \nnoise_i, \frac{v_i}{\ltwo{v_i}} \right\rangle }^2 \\
 &\leq \kappa K_\ndist^2.
\end{split}
\end{equation}
For any sub-Gaussian random variable $U$ we have that,
\begin{equation}
    \mathbb{P}\left(|U| \geq t \right) \leq 2e^{-\frac{ t^2}{\snorm{U}^2}}.
\end{equation}
It follows that since $\ubar{\nnoise}$ and $\ubar{\dpos}$ are independent,
\begin{equation}
\begin{split}
    \Pr{\binner{ \ubar{\nnoise}}{\frac{\ubar{\dpos}}{\ltwo{\ubar{\Delta}}} } \geq \frac{1}{2} \frac{\ltwo{\ubar{\Delta}}}{\sigma}  \Big\rvert \ubar{\Delta}} &\leq 2 \exp\left(-\frac{1}{4\kappa K^2_\ndist}\frac{\ltwo{ \ubar{\Delta} }^2}{\sigma^2}\right) \\
    &= 2 \exp\left(-\frac{1}{4\kappa K^2_\ndist}\frac{ \sum\limits_{i=1}^t \ltwo{ \Delta_i }^2}{\sigma^2}\right) \\
    &\leq 2 \exp\left(-\frac{1}{4\kappa K^2_\ndist}\frac{ \sum\limits_{i=1}^{t-1} \ltwo{ \Delta_i }^2}{\sigma^2}\right),
\end{split}
\end{equation}
where in the last inequality we used the fact that $\ltwo{\Delta_t}^2 \geq 0$. Although this last step might seem arbitrary we will see that it allows us to bound the expectation of the expressions above by a Gaussian integral. Remember that $f_\idist$ denotes the density of $X_1$ with respect to Lebesgue measure in $\R^d$. Clearly $\ubar{X}$ also has a density with respect to Lebesgue measure in $\R^{t \times d}$, but $\ubar{\Delta}$ does not, since $\sum_{i=1}^{t-1} \dpos_i = -\dpos_t$ and thus $\ubar{\Delta}$ takes values in a $((t-1) \times d)$-dimensional sub-space of $\R^{t \times d}$. However, setting $\ubar{\Delta}_{(t-1)} \in \R^{(t-1) \times d}$ to be $\ubar{\Delta}$ without its $t$-th row, we have that $\ubar{\Delta}_{(t-1)}$ has a density with respect to the Lebesgue measure on $\R^{(t-1) \times d}$ given by: 

\begin{equation}
    f_{\ubar{\Delta}_{(t-1)}}\left( b \right) = \int_{\R^d} \prod_{j=1}^{(t-1)} f_\idist \left(x - \sum_{i=1}^{j} b[i,:] \right) f_\idist(x)dx, ~ b \in \R^{(t-1) \times d}.
\end{equation}
In particular,
\begin{equation}
    \linf{f_{\ubar{\Delta}_{(t-1)}}} \leq \int_{\R^d} \linf{f_\idist}^{(t-1)} f_\idist(x) dx = \linf{f_\idist}^{(t-1)}.
\end{equation}
It follows that, 
\begin{equation}
\begin{split}
    \Pr{ C_t \text{ is augmenting} } &= \Ex{ \Pr{\binner{ \ubar{\nnoise}}{\frac{\ubar{\dpos}}{\ltwo{\ubar{\Delta}}} } \geq \frac{1}{2} \frac{\fnorm{\ubar{\Delta}}}{\sigma}  \Big\rvert \ubar{\Delta}} } \\
    &\leq 2 \int_{\R^{(t-1) \times d}} \exp\left(-\frac{1}{4\kappa K^2_\ndist}\frac{ \sum\limits_{i=1}^{t-1} \ltwo{ b[i,:] }^2}{\sigma^2}\right) f_{\ubar{\Delta}_{(t-1)}}\left( b \right) db \\
    &\leq 2 \| f_{\ubar{\Delta}_{(t-1)}} \|_{\infty} \int_{\R^{(t-1) \times d}} \exp\left(-\frac{1}{4\kappa K^2_\ndist}\frac{ \sum\limits_{i=1}^{t-1} \ltwo{ b[i,:] }^2}{\sigma^2}\right) db \\
    &\leq 2 \linf{f_\idist}^{(t-1)} \left( 4 \pi \kappa K^2_\ndist \sigma^2 \right)^{\frac{d(t-1)}{2}},
\end{split}
\end{equation}
and so the result follows for $C_t = (1,2,\ldots,t)$ taking $b_{\mathrm{LD}} = 4\pi \kappa$. Since both $X_1, \dots, X_n$ and $Z_1, \dots, Z_n$ have exchangeable distributions, the same argument follows if we take $C_t$ to be any other $t$-cycle.
\end{proof}

We now proceed to prove the upper bound from \Cref{thm:lss_up_low_d}.

\begin{proof}[Proof of \Cref{thm:lss_up_low_d}]
\label{proof:up_lss_fix_d}

Using \eqref{eq:det_bound_lss} and \Cref{lem:aug_up_bound_fix_d}, we have that,
\begin{align}    
\label{eq:ex_bound_fix_d}
    \Ex{\lssm} &\leq \sum_{t=2}^n t \left|\{ C_t; ~ C_t \text{ is a } t\text{-cycle} \} \right|\Pr{(1,\dots,t) \text{ is augmenting}} \\
    &\leq \sum_{t=2}^n n^t \Pr{(1,\dots,t) \text{ is augmenting}} \label{eq:nt_bound} \\ \label{eq:nt_bound2}
    &\leq 2 K^d n^2 \sigma^d \sum_{t=0}^{\infty} (K^dn\sigma^d)^{t}
\end{align}
where the second inequality follows since $t \left|\{ C_t; ~ C_t \text{ is a } t\text{-cycle} \} \right| = t \binom{n}{t}(t-1)! \leq n^t$. 
The upper bound in \eqref{eq:nt_bound2} implies,
\[ \Ex{\lssm}\leq  3 K^dn^2 \sigma^d\leq n\mbox{ when } K^dn\sigma^d \leq \frac{1}{3}.\]
On the other hand,  when $K^dn\sigma^d>1/3$ we still have the trivial upper bound $\Ex{\lssm}\leq n$ because no estimator makes more than $n$ mistakes. Combining these estimates finishes the proof.\end{proof}

The proof of the upper bound from \Cref{thm:lss_up_fix_d} is very similar but we need the following additional result:

\begin{lemma}
\label{lem:estim_aug_2_cycle}
Let $n \geq 2$, $C_2 = (i,j), ~ i,j \in [n]$ be any $2$-cycle and assume we are under \modlds with $d$ fixed. Then,
    \begin{equation}
    \label{eq:aug_2_cycle_lim}
        \lim_{\sigma \to 0^+} \frac{\Pr{C_2 \text{ is augmenting}}}{\sigma^d} = 2^{-d}\rho_d \Ex{f_\idist(X_1)} \Ex{\ltwo{\nnoise_1-\nnoise_2}^d},
    \end{equation}
    where $\rho_d$ is the volume of $B_{\ltwo{\,\cdot\,}}(0,1)$.
\end{lemma}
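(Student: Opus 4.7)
My plan is to reduce the event $\{C_2 \text{ is augmenting}\}$ to the geometric event that $\Delta := X_1 - X_2$ lies in a small random Euclidean ball, and then evaluate the asymptotic probability by dominated convergence. By exchangeability of $(X_i)$ and $(\nnoise_i)$ I may take $C_2 = (1,2)$, and I continue to assume $\tpi = \text{Id}$. Writing $W := \nnoise_1 - \nnoise_2$, so that $Y_1 - Y_2 = \Delta + \sigma W$, the augmenting condition
\[ \binner{X_1}{Y_2} + \binner{X_2}{Y_1} \geq \binner{X_1}{Y_1} + \binner{X_2}{Y_2} \]
is equivalent to $\binner{\Delta}{\Delta + \sigma W} \leq 0$, i.e., after completing the square,
\[ \ltwo{\Delta + \sigma W/2}^2 \leq \sigma^2 \ltwo{W}^2/4. \]
Hence $\{C_2 \text{ is augmenting}\}$ coincides almost surely with $\{\Delta \in B_{\ltwo{\,\cdot\,}}(-\sigma W/2,\,\sigma\ltwo{W}/2)\}$.

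Let $f_\Delta$ denote the Lebesgue density of $\Delta$, which exists because $\idist$ has density $f_\idist$ and is given by $f_\Delta(\delta) = \int f_\idist(x) f_\idist(x-\delta)\,dx$. Since $f_\idist \in L^1 \cap L^\infty$ (under \modlds we have $\linf{f_\idist} \leq e^{\beta d} < \infty$), $f_\Delta$ is uniformly bounded by $\linf{f_\idist}$ and continuous everywhere (convolution of an $L^\infty$ function with an $L^1$ function is uniformly continuous), with
\[ f_\Delta(0) = \int f_\idist(x)^2\,dx = \Ex{f_\idist(X_1)}. \]
By independence of $\Delta$ and $W$, conditioning on $W$ and performing the change of variables $\delta = \sigma\eta$ gives
\[ \frac{\Pr{C_2 \text{ is augmenting}}}{\sigma^d} = \Ex{\int_{B_{\ltwo{\,\cdot\,}}(-W/2,\,\ltwo{W}/2)} f_\Delta(\sigma\eta)\,d\eta}. \]

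Finally I would let $\sigma \to 0$ via two successive applications of the dominated convergence theorem. For each fixed $W$, $f_\Delta(\sigma\eta) \to f_\Delta(0)$ pointwise and is dominated by $\linf{f_\idist}$ on a ball of finite Lebesgue volume $\rho_d 2^{-d}\ltwo{W}^d$, so the inner integral converges to $f_\Delta(0)\rho_d 2^{-d}\ltwo{W}^d$. For the outer expectation, the integrand is uniformly dominated by $\linf{f_\idist}\rho_d 2^{-d}\ltwo{W}^d$, whose expectation is finite because $\nnoise_1$ (and hence $W$) is sub-Gaussian under \modlds, so $\ltwo{W}^d$ is integrable. Taking limits yields
\[ \lim_{\sigma \to 0} \frac{\Pr{C_2 \text{ is augmenting}}}{\sigma^d} = 2^{-d}\rho_d \Ex{f_\idist(X_1)} \Ex{\ltwo{\nnoise_1 - \nnoise_2}^d}, \]
as claimed. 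The only mildly non-routine step is the continuity of $f_\Delta$ at zero, but this is a standard property of convolutions under our boundedness hypothesis, so I do not anticipate any genuine obstacle.
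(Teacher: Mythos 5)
Your proof is correct and follows essentially the same route as the paper's: reduce the augmenting event to $\Delta$ falling in a random ball of radius $\Theta(\sigma)$, condition on the noise difference, and pass to the limit by dominated convergence using the sub-Gaussian tail and the boundedness and continuity of $f_\Delta$. The only (minor) improvement is that you derive continuity of $f_\Delta$ from $f_\idist \in L^1 \cap L^\infty$ rather than assuming $f_\idist$ itself is continuous, which is slightly more general but not needed here since Proposition \ref{prop:lss_fix_d} already assumes a continuous density.
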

\begin{proof}
Set $\noise_1 = \sigma \nnoise_1$ and $ \noise_2 = \sigma \nnoise_2$ where $\sigma > 0$, $\nnoise_1, \nnoise_2 \iid \ndist$.
In view of \eqref{eq:aug_cycle_lss},
    \begin{equation}
    \label{eq:aug_ball_equiv}
    \begin{split}
        \Pr{C_2 \text{ is augmenting}} &= \Pr{\inner{\noise_1 - \noise_2}{\ipos_1 - \ipos_2} \geq \ltwo{\ipos_1 - \ipos_2}^2} \\ 
        &= \Pr{\ltwo{\frac{\noise_1-\noise_2}{2}} \geq \ltwo{\frac{\noise_1-\noise_2}{2} - (\ipos_1 - \ipos_2)}} \\
        &= \Pr{\ltwo{\Delta_Z} \geq \ltwo{ \Delta_Z - \Delta_X}} \\
        &= \Pr{\ltwo{\Delta_Z} \geq \ltwo{ \Delta_Z - \Delta_X}, ~ \ltwo{\Delta_Z} > 0}
        \end{split}
    \end{equation}
where $\Delta_Z = \frac{\noise_1-\noise_2}{2}$ and $\Delta_X = \ipos_1 - \ipos_2$ and the last equality follows since $\Pr{\ltwo{\Delta_X} = 0} = 0$ as $\idist$ has no atoms. Let $f_{\Delta_X}$ be the density of $\Delta_X$, which exists and is continuous since $f_\idist$ is. We can rewrite \eqref{eq:aug_ball_equiv} as:
\begin{equation}
    \Ex{\rho_d\ltwo{\Delta_Z}^d \frac{\Pr{\ltwo{\Delta_Z} \geq \ltwo{ \Delta_Z - \Delta_X}  | \Delta_Z}}{\rho_d\ltwo{\Delta_Z}^d} \indic{\ltwo{\Delta_Z} > 0}}.
\end{equation}
Note that,
\begin{equation}
\begin{split}
\Pr{\ltwo{\Delta_Z} \geq \ltwo{ \Delta_Z - \Delta_X} 
     | \Delta_Z} &= \int_{B_{\ltwo{\,\cdot\,}}\left(\Delta_Z,\ltwo{\Delta_Z}\right)} f_{\Delta_X}(x)dx \text{ a.s.}
\end{split} 
\end{equation}
Now, as $\sigma \to 0$, $\ltwo{\Delta_Z} \asc{n} 0$. Since $f_{\Delta_X}$ is continuous it follows that,
\begin{equation}
    \lim_{\sigma \to 0} \frac{\Pr{\ltwo{\Delta_Z} \geq \ltwo{ \Delta_Z - \Delta_X}  | \Delta_Z}}{\rho_d\|\Delta_Z\|^d}\indic{\ltwo{\Delta_Z} > 0} = f_{\Delta_X}(0)\indic{\ltwo{\Delta_Z} > 0} \text{ a.s.}
\end{equation}
In particular, since $\frac{\ltwo{\Delta_Z}}{\sigma} = \ltwo{\frac{\nnoise_1-\nnoise_2}{2}}$,
\begin{equation}
\label{eq:cond_limit}
    \lim_{\sigma \to 0} \frac{\rho_d\ltwo{\Delta_Z}^d}{\sigma^d}\frac{\Pr{\ltwo{\Delta_Z} \geq \ltwo{ \Delta_Z - \Delta_X}  | \Delta_Z}}{\rho_d\ltwo{\Delta_Z}^d}\indic{\ltwo{\Delta_Z} > 0} = 2^{-d}\rho_d \ltwo{\nnoise_1-\nnoise_2}^d f_{\Delta_X}(0)\indic{\ltwo{\Delta_Z} > 0}  \text{ a.s.}
\end{equation}
Note that $f_{\Delta_X}(x) = \int_{\R^d} f_\idist(y+x)f_\idist(y)dy$, so,
\begin{equation}
\frac{\rho_d\ltwo{\Delta_Z}^d}{\sigma^d}\frac{\Pr{\ltwo{\Delta_Z} \geq \ltwo{ \Delta_Z - \Delta_X}  | \Delta_Z}}{\rho_d\ltwo{\Delta_Z}^d}\indic{\ltwo{\Delta_Z} > 0} \leq \rho_d \ltwo{\frac{\nnoise_1-\nnoise_2}{2}}^d\|f_\idist\|_{\infty}\indic{\ltwo{\Delta_Z} > 0}.  
\end{equation}
By our sub-Gaussian assumption on $\ndist$ we have that $\Ex{\norm{\nnoise_1-\nnoise_2}^d} < +\infty$. So taking expectations on \eqref{eq:cond_limit},
\begin{equation}
\lim_{\sigma \to 0} \frac{\Pr{C_2 \text{ is augmenting}}}{\sigma^d} = 2^{-d}\rho_d \Ex{\ltwo{\nnoise_1-\nnoise_2}^d} f_{\Delta_X}(0),
\end{equation}
follows by the dominated convergence theorem. Since $ f_{\Delta_X}(0) = \Ex{f_\idist(X_1)}$, \eqref{eq:aug_2_cycle_lim} follows.
\end{proof}

{\color{blue}
The result above implies that $\Pr{C_2 \text{ is augmenting}} = \tau \sigma^d + o(\sigma^d)$ for $\tau$ as in the right hand side of \eqref{lem:estim_aug_2_cycle}. This is the same constant appearing in \Cref{fig:error_rate} and the leading term in \Cref{thm:lss_up_fix_d}.
To recover the later proposition, we repeat the proof of \Cref{thm:lss_up_low_d} but use this sharper estimate for $\Pr{C_2 \text{ is augmenting}}$ on \eqref{eq:ex_bound_fix_d}.
}

We believe the upper bound from \Cref{thm:lss_up_fix_d} is sharp when $\sigma^d n = o(1)$ since in this regime $\Ex{\lssm}$ seems to be roughly the expected the mass of augmenting $2$-cycles which is,
\begin{equation}
 n^2 \Pr{C_2 \text{ is augmenting}}.   
\end{equation}
Since our sharp estimate from \Cref{lem:estim_aug_2_cycle} gives a sharp estimate on the latter quantity, it should give a sharp estimate on $\Ex{\lssm}$.

\begin{remark}
    We note that in the proofs above we only needed to use assumption \ref{cond:sub_gauss} from \modlds on $\ndist$. In particular, if we take $\ndist$ to be as in the examples shown in \Cref{fig:error_rate}, the results of this section still hold.
\end{remark}

\subsection{Upper bounds for high-dimensions}
\label{sec:up_bound_hd}

We now proceed to prove the upper bounds on the high-dimensional case from \Cref{mod:noniso_subgauss}. In this regime we also consider $\lssc$ as defined in \eqref{eq:lss_c}. The same strategy of bounding the mass of \textit{augmenting cycles} using \eqref{eq:det_bound_lss} also work for this estimator, but in this case a $t$-cycle $C_t$ is augmenting if and only if:
\begin{equation}
    \sum_{k=1}^t \langle (\Sigma_Z)^{-1} X_{i_k}, Y_{i_{k+1}} \rangle \geq \sum_{k=1}^t \langle (\Sigma_Z)^{-1} X_{i_k}, Y_{i_k} \rangle.
\end{equation}
The upper bounds on the probability of a given cycle being augmenting under \Cref{mod:noniso_subgauss} are given by: 
\begin{lemma}
\label{lem:bound_aug_2_cycle_subgaussian} Let $\Delta_1 := X_t - X_1$, $\Delta_i := X_{i-1} - X_i$ for $i = 2, \dots, t$. For any $t$-cycle $C_t$ we have that:
\begin{equation}
\label{eq:aug_cycle_bound_lss}
    \Pr{ C_t \text{ is augmenting for } \lss } \leq 2 \Ex{\exp\left(-\frac{1}{4 \kappa^2 K^2_Z} \frac{ \left( \sum\limits_{i=1}^t \ltwo{\Delta_i}^2 \right)^2} {\sum\limits_{i=1}^t \ltwo{ (\Sigma_Z)^{\frac{1}{2}} \Delta_i }^2}\right)},
\end{equation}
and,
\begin{equation}
\label{eq:aug_cycle_bound_lssc}
    \Pr{ C_t \text{ is augmenting for } \lssc } \leq 2 \Ex{\exp \left(-\frac{1}{4 \kappa^2 K^2_Z} \sum_{i=1}^t \ltwo{ (\Sigma_Z)^{-\frac{1}{2}} \Delta_i }^2\right)},
\end{equation}
where $\kappa > 0$ is an absolute constant and $K_Z > 0$ is any constant satisfying \ref{itm:kz} from \Cref{mod:noniso_subgauss}.

\end{lemma}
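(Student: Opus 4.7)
The plan is to follow the template of the proof of \Cref{lem:aug_up_bound_fix_d}, adapted to the anisotropic sub-Gaussian setting of \Cref{mod:noniso_subgauss}. As in that proof, exchangeability of $(X_i,Z_i)_{i=1}^n$ allows me to assume $\tpi=\mathrm{Id}$ and $C_t=(1,2,\dots,t)$, so $Y_k=X_k+Z_k$ and the increments satisfy $\sum_i\Delta_i=0$. The first step is to rewrite the augmenting criterion in terms of the $\Delta_i$. Expanding $Y_k=X_k+Z_k$ in the $\lss$-augmenting inequality, the $X$--$X$ terms telescope cyclically to $\sum_k\langle X_k,X_{k+1}-X_k\rangle=-\tfrac12\sum_k\|\Delta_k\|_2^2$, while reindexing the $Z$-sum gives $\sum_k\langle X_k,Z_{k+1}-Z_k\rangle=\sum_k\langle X_{k-1}-X_k,Z_k\rangle=\sum_k\langle\Delta_k,Z_k\rangle$, using precisely the convention $\Delta_1=X_t-X_1$. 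Thus $C_t$ is $\lss$-augmenting iff
\begin{equation*}
\sum_{k=1}^t \langle\Delta_k,Z_k\rangle \;\geq\; \tfrac{1}{2}\sum_{k=1}^t \|\Delta_k\|_2^2.
\end{equation*}
The same telescoping applied to the change-of-variable $W_k=\Sigma_Z^{-1/2}X_k$ yields the $\lssc$-analog
\begin{equation*}
\sum_{k=1}^t \langle\Sigma_Z^{-1}\Delta_k,Z_k\rangle \;\geq\; \tfrac{1}{2}\sum_{k=1}^t \|\Sigma_Z^{-1/2}\Delta_k\|_2^2.
\end{equation*}

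The second step is a conditional sub-Gaussian tail bound given $\underline{\Delta}=(\Delta_1,\dots,\Delta_t)$. Writing $Z_k=\Sigma_Z^{1/2}\tilde Z_k$, the left-hand side of the $\lss$ inequality equals $\sum_{k,j}(\Sigma_Z^{1/2}\Delta_k)[j]\,\tilde Z_k[j]$, a weighted sum of the independent mean-zero scalar random variables $\tilde Z_k[j]$, each of sub-Gaussian norm at most $K_Z$ by assumption \ref{itm:kz}. The standard sub-Gaussian rotation inequality then bounds the conditional sub-Gaussian norm of this sum by $\kappa^{1/2}K_Z\bigl(\sum_k\|\Sigma_Z^{1/2}\Delta_k\|_2^2\bigr)^{1/2}$ for some absolute $\kappa>0$, whence the tail inequality $\Pr{|U|\geq t'}\leq 2\exp(-t'^2/\snorm{U}^2)$ applied with $t'=\tfrac12\sum_k\|\Delta_k\|_2^2$ produces the exponential bound inside the expectation of \eqref{eq:aug_cycle_bound_lss} (absorbing $\kappa$ into $\kappa^2$). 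Repeating the argument for $\lssc$ with $\Sigma_Z^{-1/2}\Delta_k$ replacing $\Sigma_Z^{1/2}\Delta_k$ and $t'=\tfrac12\sum_k\|\Sigma_Z^{-1/2}\Delta_k\|_2^2$ delivers \eqref{eq:aug_cycle_bound_lssc}.

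The final step is simply to take an outer expectation over $\underline{\Delta}$ on both sides. The derivation is largely mechanical; the only point requiring care is the cyclic reindexing that justifies the $\Delta_1=X_t-X_1$ convention, and the fact that the sub-Gaussian rotation step requires no hypothesis beyond the independence and centering of the coordinates $\tilde Z_k[j]$ from \ref{itm:kz} — in particular, no condition on $\Sigma_Z$ is used at this stage, which is why the bounds hold in full generality within \Cref{mod:noniso_subgauss}.
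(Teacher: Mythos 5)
Your proof is correct and follows essentially the same route as the paper's: reduce to $\tpi=\mathrm{Id}$ and $C_t=(1,\dots,t)$ by exchangeability, telescope the augmenting criterion to $\sum_k\langle\Delta_k,Z_k\rangle\geq\tfrac12\sum_k\|\Delta_k\|_2^2$ (and its $\Sigma_Z^{-1}$-weighted analogue for $\lssc$), condition on $\underline{\Delta}$, apply a sub-Gaussian tail bound via the rotation inequality, and take outer expectations. The only material difference is cosmetic: you apply the rotation inequality once to the flattened double sum $\sum_{k,j}(\Sigma_Z^{1/2}\Delta_k)[j]\tilde Z_k[j]$, yielding a factor $\kappa K_Z^2$, whereas the paper applies it in two stages (first over $j$ for each fixed $k$, then over $k$), yielding $\kappa^2 K_Z^2$; your version gives a marginally tighter constant, which of course still implies the stated bound.
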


\begin{proof}
We prove again for $C_t = (1,2,\ldots,t)$ and the general case follows due to the exchangeability of $X_1, \dots, X_n$ and $Z_1, \dots, Z_n$. Following the proof of \Cref{lem:aug_up_bound_fix_d}, $C_t$ will be augmenting for $\lss$, if and only if:
\begin{equation}
\sum_{i=1}^t \langle \Delta_i, Z_i \rangle \geq \frac{1}{2} \left(\sum_{i=1}^t \ltwo{ \Delta_i }^2  \right).
\end{equation}
Conditioned on $\Delta_1,\dots,\Delta_t$, each $\langle \Delta_i, Z_i \rangle$ is a linear combination of independent zero-mean sub-Gaussian random variables with,
\begin{equation}
\begin{split}
    \snorm{ \binner{\Delta_i}{Z_i} }^2 &= \snorm{ \binner{ \Delta_i}{(\Sigma_Z)^{\frac{1}{2}} (\Sigma_Z)^{-\frac{1}{2}} Z_1}}^2 \\
    &= \snorm{ \left\langle  (\Sigma_Z)^{\frac{1}{2}} \Delta_i, (\Sigma_Z)^{-\frac{1}{2}} Z_1 \right\rangle }^2 \\ 
    &\leq \kappa \sum_{j=1}^d \left( \sigma_Z[j] \Delta_i[j] \right)^2 \snorm{ \nnoise_1[j]}^2 \\ 
    &\leq \kappa K^2_Z\ltwo{(\Sigma_Z)^{\frac{1}{2}} \Delta_i }^2,
\end{split}
\end{equation}
for some absolute constant $\kappa > 0$. It follows that (conditioned on $\Delta$),
\begin{equation}
    \snorm{ \sum_{i=1}^t \langle \Delta_i, Z_i \rangle }^2 \leq \kappa^2 K^2_Z \sum_{i=1}^t \ltwo{ (\Sigma_Z)^{\frac{1}{2}} \Delta_i }^2.
\end{equation}
Letting $\ubar{\dpos}$ be as in \Cref{lem:aug_up_bound_fix_d} we have that,
\begin{equation}
\begin{aligned}
\mathbb{P}\left(\sum_{i=1}^t \langle \Delta_i, Z_i \rangle \geq \frac{1}{2} \left(\sum\limits_{i=1}^t \ltwo{ \Delta_i }^2  \right) \Big\vert \ubar{\Delta} \right)
&\leq 2\exp \left(-\frac{1}{4 \kappa^2 K^2_Z} \frac{ \left(\sum\limits_{i=1}^t \ltwo{ \Delta_i }^2 \right)^2}{\sum\limits_{i=1}^t \ltwo{ (\Sigma_Z)^{\frac{1}{2}} \Delta_i }^2} \right).
\end{aligned}
\end{equation}
For $\lssc$, $C_t$ will be augmenting if and only if,
\begin{equation}
\label{eq:aug_cycle_lssc}
\sum_{i=1}^t \left\langle (\Sigma_Z)^{-1}\Delta_i, Z_i \right\rangle \geq \frac{1}{2} \left(\sum_{i=1}^t \ltwo{ (\Sigma_Z)^{-\frac{1}{2}} \Delta_i }^2  \right).
\end{equation}
In this case, conditioned on $\Delta_i$,
\begin{equation}
\begin{split}
    \snorm{ \binner{(\Sigma_Z)^{-1} \Delta_i}{Z_i}}^2 &= \snorm{ \binner{(\Sigma_Z)^{-\frac{1}{2}} \Delta_i}{ (\Sigma_Z)^{-\frac{1}{2}} Z_1 } }^2 \\
    &\leq \kappa \sum_{j=1}^d \left((\Sigma_Z)^{-\frac{1}{2}}[j] \Delta_i[j]\right)^2 \snorm{ \nnoise_1[j] }^2 \\
    &\leq \kappa K^2_Z \ltwo{(\Sigma_Z)^{-\frac{1}{2}} \Delta_i}^2,
\end{split}
\end{equation}
and so conditioned on $\Delta_1, \dots, \Delta_t$,
\begin{equation}
    \snorm{ \sum_{i=1}^t \langle (\Sigma_Z)^{-1} \Delta_i, Z_i \rangle }^2 \leq \kappa^2 K^2_Z \sum_{i=1}^t \ltwo{(\Sigma_Z)^{-\frac{1}{2}} \Delta_i}^2.
\end{equation}
Thus,
\begin{equation}
\begin{aligned}
\mathbb{P}\left(\sum_{i=1}^t \langle (\Sigma_Z)^{-1}\Delta_i, Z_i \rangle \geq \frac{1}{2} \left(\sum_{i=1}^t \ltwo{ (\Sigma_Z)^{-\frac{1}{2}} \Delta_i }^2  \right)  |  \ubar{\Delta} \right) &\leq
2 \exp\left(-\frac{1}{4 \kappa^2 K^2_Z} \sum_{i=1}^t \ltwo{ (\Sigma_Z)^{-\frac{1}{2}} \Delta_i }^2\right).
\end{aligned}
\end{equation}
Taking expected values with respect to $\Delta_1, \dots, \Delta_n$ we have the result.
\end{proof}

In order to estimate the expected values on the right hand side of \eqref{eq:aug_cycle_bound_lss} and \eqref{eq:aug_cycle_bound_lssc} we use the concentration of the norm of $\Delta_1, \dots, \Delta_t$. We do so with the following consequence of Hanson-Wright Inequality \cite{rudelson2013hanson}:
\begin{lemma}
\label{lem:conc_quad_abs}
Let $U_1, \dots, U_t$, $t \geq 2$, be i.i.d. random vectors in $\R^d$ with zero-mean, unit variance, independent sub-Gaussian coordinates and let $D$ be a diagonal matrix with non-negative diagonal entries. We have that for any $\epsilon > 0$,

\begin{equation}
    \Pr{ \left| \sum_{i=1}^t \| D^{\frac{1}{2}} \Xi_i \|^2_2 - 2t\tr{D} \right| \geq \varepsilon 2t\tr{D} } \leq e^{-c_1\left(\frac{\varepsilon}{K^2}\right) 2t \frac{\tr{D}}{\lVert D \rVert_{op}}},
\end{equation}
where $\Xi_1 := U_t - U_1$, $\Xi_i := U_{i-1} - U_i$ for $i = 2, \dots, t$, $\max_{j \in [d]} \snorm{U_1[j]} \leq K$ and $c_1\left(\frac{\varepsilon}{K^2}\right) > 0$ is a positive constant that depends on $\frac{\varepsilon}{K^2}$. 

\end{lemma}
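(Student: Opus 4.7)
The plan is to express the quantity $\sum_{i=1}^t \lVert D^{1/2}\Xi_i\rVert_2^2$ as a single quadratic form in a vector with independent sub-Gaussian coordinates and then invoke the Hanson--Wright inequality \cite{rudelson2013hanson}. Concretely, let $\xi \in \R^{td}$ be the concatenation of $U_1,\dots,U_t$, and let $C$ be the $t\times t$ matrix encoding the cyclic differences, so that $\Xi_i = \sum_{j=1}^t C_{ij} U_j$ with $C = S - I_t$, where $S$ is the cyclic shift sending coordinate $i$ to $i-1\pmod t$. Using Kronecker products one has $\sum_{i=1}^t \Xi_i^T D\, \Xi_i = \xi^T M \xi$, where $M = (C^T C)\otimes D$.

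Next I would compute the relevant scalar quantities. Since the coordinates of $\xi$ are independent with zero mean and unit variance, $\mathbb{E}[\xi^T M\xi] = \tr(M) = \tr(C^T C)\tr(D)$, and a direct check shows $\tr(C^T C) = 2t$, so the centering constant is exactly $2t\tr(D)$ as required. For the operator and Frobenius norms of $M$, I would use the identities $\|A\otimes B\|_{op} = \|A\|_{op}\|B\|_{op}$ and $\|A\otimes B\|_F^2 = \|A\|_F^2\|B\|_F^2$. The eigenvalues of $C^T C = 2I_t - S - S^T$ are $2-2\cos(2\pi k/t)$ for $k=0,\dots,t-1$, hence $\|C^T C\|_{op}\le 4$ and $\|C^T C\|_F^2 \le 4\tr(C^T C) = 8t$. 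Combining this with the elementary bound $\|D\|_F^2 = \sum_j D_{jj}^2 \le \|D\|_{op}\tr(D)$ (valid since $D$ has non-negative diagonal entries) gives
\begin{equation}
\|M\|_{op} \le 4\|D\|_{op}, \qquad \|M\|_F^2 \le 8t\,\|D\|_{op}\tr(D).
\end{equation}

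Finally, applying the Hanson--Wright inequality to $\xi^T M\xi$ with deviation $s = \varepsilon\cdot 2t\tr(D)$ yields an upper bound of the form
\begin{equation}
2\exp\!\left(-c\min\!\left(\frac{s^2}{K^4\|M\|_F^2},\, \frac{s}{K^2\|M\|_{op}}\right)\right),
\end{equation}
and plugging in the estimates above shows that the exponent is bounded below by a constant times $\min(\varepsilon^2/K^4,\,\varepsilon/K^2)\cdot t\,\tr(D)/\|D\|_{op}$. Absorbing the absolute constants and the factor of $2$ into a function $c_1(\varepsilon/K^2)$ of the stated form (and noting the $2$ in front of the exponential can be swallowed for, say, $\varepsilon$ small enough, or by adjusting $c_1$) gives the claim.

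The routine work is all in step three: keeping track of operator and Frobenius norms through the Kronecker structure. The only mildly delicate point is the bound $\|D\|_F^2 \le \|D\|_{op}\tr(D)$, which is what lets us express the final rate in terms of the stable rank $\tr(D)/\|D\|_{op}$ rather than in terms of $\|D\|_F$; this is where the non-negativity assumption on the diagonal of $D$ is essential. Apart from this, the argument is a direct application of Hanson--Wright after the reduction to a single quadratic form.
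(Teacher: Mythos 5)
Your proof is correct and follows essentially the same route as the paper: rewrite $\sum_i \lVert D^{1/2}\Xi_i\rVert_2^2$ as a single quadratic form $\xi^T\bigl((C^TC)\otimes D\bigr)\xi$ (with $C^TC$ the cycle-graph Laplacian), bound the operator and Frobenius norms through the Kronecker structure, and apply Hanson--Wright. One small improvement in your version is that working directly with $C=S-I_t$ handles $t=2$ and $t\geq 3$ uniformly, whereas the paper treats $t=2$ separately via a scaled path-graph Laplacian; your Frobenius bound $\lVert C^TC\rVert_F^2\leq 8t$ is slightly looser than the paper's exact $6t$, but this only changes the absolute constant.
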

\begin{proof} Let $U \in \R^{td}$ be a concatenation of $U_1, U_2, \dots, U_t$, \ie
\begin{equation}
    U[d(i-1)+j] = U_i[j], \text{ for } i \in [t] \text{ and } j \in [d].
\end{equation}
We first consider the case $t \geq 3$. Let $L^{C_t}$ and $E^{C_t}$ be the Laplacian and incidence matrix respectively, of the cycle graph on $t$ vertices. Taking $A = \left( L^{C_t} \otimes D \right)$ and $B = \left( E^{C_t} \otimes D^{\frac{1}{2}} \right)$ we have that $A = B B^T$. In particular,
\begin{equation}
\begin{split}
 \sum_{i=1}^t \| D^{\frac{1}{2}}\Xi \|^2_2 &= \sum_{i=1}^t \binner{D^{\frac{1}{2}}\Xi_i}{D^{\frac{1}{2}}\Xi_i} \\
 &= \sum_{i=1}^t \sum_{j=1}^d \left(D^{\frac{1}{2}}[j,j]\Xi_i[j]\right)^2\\
 &= \binner{B^TU}{B^TU} \\
 &= \binner{AU}{U}. 
\end{split}
\end{equation}
Since the eigenvalues of $L^{C_t}$ are given by $4\sin^2\left( \frac{\pi k}{t} \right), ~ k = 0, \dots, t-1$ \cite{spec_graph_theory_2018}, we have that,
\begin{equation}
    \opnorm{ A } = \opnorm{ L^{C_t} } \opnorm{ D } = 4 \text{sin}^2\left( \frac{\pi \lc \frac{t}{2} \rc }{t} \right) \opnorm{ D } \leq 4 \opnorm{ D }.
\end{equation}
Also,
\begin{equation}
    \ltwo{A}^2 = \ltwo{L^{C_t}}^2 \ltwo{ D }^2 = 6t \tr{D^2} \leq 6t \opnorm{ D } \tr{D},
\end{equation}
and,
\begin{equation}
    \tr{A} = 2t\tr{D}.
\end{equation}
Since $U$ has independent, zero-mean and unit variance coordinates we have that $\Ex{\binner{A U}{U}} = \tr{A}$. It follows that using Hanson-Wright inequality,

\begin{equation}
\begin{aligned}
    \Pr{\left|\binner{A U}{U} -  \tr{A}\right| \geq \varepsilon \tr{A}} &\leq 2 \exp \left(-c ~ \text{min}\left( \frac{\varepsilon^2 \tr{A}^2}{K^4 \ltwo{A}^2}, \frac{\varepsilon \tr{A}}{K^2 \opnorm{ A }}  \right) \right) \\
    &\leq 2\exp \left(-c ~ \text{min}\left( \frac{\varepsilon^2 (2 t \tr{D})^2}{K^4 6t \tr{D} \opnorm{ D }}, \frac{\varepsilon 2 t \tr{D}}{4 K^2 \opnorm{ D }}  \right) \right) \\
    &= 2\exp \left(- 2 t \frac{\tr{D}}{\opnorm{ D }} ~ c ~ \text{min}\left( \frac{\varepsilon^2}{3 K^4} , \frac{\varepsilon}{4 K^2} \right) \right),
\end{aligned}
\end{equation}
for some absolute constant $c > 0$.
Following a similar reasoning, if $t=2$, letting $A = 2 \left( L^{P_2} \otimes D \right)$, where $L^{P_2}$ is the Laplacian of the path graph on $2$ vertices, we have: 
\begin{equation}
\begin{split}
    \sum_{i=1}^2 \| D^{\frac{1}{2}}\Xi_i \|^2_2 &= 2 \| D^{\frac{1}{2}}\Xi_1 \|^2_2 \\
    &= \binner{AU}{U}.
\end{split}
\end{equation}
Since the eigenvalues of $L^{P_t}$ are $0$ and $2$  \cite{spec_graph_theory_2018} we have,
\begin{equation}
    \opnorm{ A } = 4 \opnorm{ D },
\end{equation}
\begin{equation}
    \ltwo{A}^2 = 16 \tr{D^2} \leq 16 \opnorm{ D } \tr{D},
\end{equation}
and,
\begin{equation}
    \tr{A} = 4\tr{D}.
\end{equation}
It follows that,
\begin{equation}
\begin{aligned}
    \Pr{\left|\binner{A U}{U} -  \tr{A}\right| \geq \varepsilon \tr{A}} 
    &\leq 2 \exp \left(-c ~ \text{min}\left( \frac{\varepsilon^2 \tr{A}^2}{K^4 \ltwo{A}^2}, \frac{\varepsilon \tr{A}}{K^2 \opnorm{ A }}  \right) \right) \\
    &\leq 2\exp \left(-c ~ \text{min}\left( \frac{\varepsilon^2 (4 \tr{D})^2}{16 K^4 \tr{D} \opnorm{ D }}, \frac{\varepsilon 4 \tr{D}}{4 K^2 \opnorm{ D }}  \right) \right) \\
    &\leq 2\exp \left(-4\frac{\tr{D}}{\opnorm{ D }} ~ c ~ \text{min}\left( \frac{\varepsilon^2}{4 K^4} , \frac{\varepsilon}{4 K^2} \right) \right).
\end{aligned}
\end{equation}
The result follows by taking $c_1\left( \frac{\varepsilon}{K^2} \right) = c ~ \text{min}\left( \frac{\varepsilon^2}{4K^4} , \frac{\varepsilon}{4 K^2} \right)$.
\end{proof}

We now prove \Cref{thm:high_dim_lss} taking $b_{\text{HD}} := 2 \kappa^2$.

\begin{proof}[Proof of \Cref{thm:high_dim_lss}]
\label{proof:high_dim_lss}

Given $\varepsilon > 0$ and $t \geq 2$, we have that,
\begin{equation}
\begin{aligned}
\Ex { \exp\left(-\frac{1}{4 \kappa^2 K^2_Z} \frac{ \left( \sum\limits_{i=1}^t \ltwo{ \Delta_i }^2 \right)^2} {\sum\limits_{i=1}^t \ltwo{ (\Sigma_Z)^{\frac{1}{2}} \Delta_i }^2} \right) }
&\leq \expo{ -\frac{1}{4 \kappa^2 K^2_Z}\frac{(1-\varepsilon)^2}{(1+\varepsilon)}\frac{(2 t \tr{\Sigma_X})^2}{(2 t \tr{\Sigma_Z \Sigma_X)}}} \\
&+ \Pr{ \sum_{i=1}^t \ltwo{\Delta_i}^2 \leq (1-\varepsilon) 2 t \tr{\Sigma_X} }\\
&+ \Pr{ \sum_{i=1}^t \ltwo{(\Sigma_Z)^{\frac{1}{2}}\dpos_i}^2 \geq (1+\varepsilon) 2 t \tr{\Sigma_Z \Sigma_X} }\\
&\leq \exp \left( -\frac{1}{4 \kappa^2 K^2_Z}\frac{(1-\varepsilon)^2}{(1+\varepsilon)}\frac{(2 t \tr{\Sigma_X})^2}{(2 t \tr{\Sigma_Z \Sigma_X)}} \right) \\
&+ \exp \left(-2 t \frac{\tr{\Sigma_X}}{\opnorm{\Sigma_X}} c_1\left(\frac{\varepsilon}{(K_X)^2}\right) \right) \\
&+ \exp \left( -2 t \frac{\tr{\Sigma_Z \Sigma_X}}{\opnorm{ \Sigma_Z \Sigma_X }} c_1\left(\frac{\varepsilon}{(K_X)^2}\right) \right),
\end{aligned}
\end{equation}
where the last inequality follows from Lemma \ref{lem:conc_quad_abs} taking $U_1 = \tilde{X}_1$ with $D = \Sigma_X$ and with $D = \Sigma_Z \Sigma_X$ respectively. Using the same reasoning as in \eqref{eq:nt_bound} it follows that,

\begin{align}
\Ex{\lssm}
&\leq\sum_{t=2}^n n^t \Pr{ (1,\dots,t) \text{ is augmenting for } \lss} \\
&\leq \sum_{t=2}^n 2 \exp(t\log n) \Ex{ \exp \left(-\frac{1}{4 \kappa^2 K^2_Z} \frac{ \left( \sum\limits_{i=1}^t \ltwo{\Delta_i}^2 \right)^2} {\sum\limits_{i=1}^t \ltwo{ (\Sigma_Z)^{\frac{1}{2}} \Delta_i }^2} \right) } \\
&\leq 2\sum\limits_{t=2}^n \exp \left(t \log n \left(1 -\frac{1}{2 \kappa^2 K^2_Z \log n}\frac{(1-\varepsilon)^2}{(1+\varepsilon)}\frac{\tr{\Sigma_X}}{\frac{\tr{\Sigma_Z \Sigma_X}}{\tr{\Sigma_X}}}\right)\right) \\
&+ 2\sum_{t=2}^n \exp \left( t \log n \left(1 - 2 \frac{\tr{\Sigma_X}}{\log n \opnorm{ \Sigma_X }} c_1\left(\frac{\varepsilon}{(K_X)^2}\right)\right) \right) \\
&+ 2\sum_{t=2}^n \exp \left(t \log n\left(1 - 2 \frac{\tr{\Sigma_Z \Sigma_X}}{\log n\opnorm{ \Sigma_Z \Sigma_X }} c_1\left(\frac{\varepsilon}{(K_X)^2}\right)\right) \right) \\
&\leq 6 \sum_{t=2}^{+\infty} e^{-t\omega(1)} = o(1),
\end{align} 
where the last equality is a consequence of our hypothesis taking $b_{\mathrm{HD}} = 2 \kappa^2$. It follows by Markov's inequality that we have perfect recovery.
\end{proof}
The proof of \Cref{thm:high_dim_lssc} is analogous.
\begin{proof}[Proof of \Cref{thm:high_dim_lssc}]
\label{proof:high_dim_lssc}

Note that,
\begin{equation}
\begin{aligned}
    \Ex{\sum_{i=1}^t \lVert (\Sigma_Z)^{-\frac{1}{2}}\Delta_i \rVert^2_2} &= 2 t \tr{(\Sigma_Z)^{-1} \Sigma_X}.
\end{aligned}
\end{equation}

Given $\varepsilon > 0$, 

\begin{equation}
\begin{aligned}
\label{eq:bound_mgf_conc}
    \Ex{\exp \left(-\frac{1}{4 \kappa^2 K^2_Z} \sum_{i=1}^t \ltwo{ (\Sigma_Z)^{-\frac{1}{2}} \Delta_i }^2\right)} &\leq \exp \left(-\frac{(1-\varepsilon)}{4 \kappa^2 K^2_Z}  2 t \tr{(\Sigma_Z)^{-1} \Sigma_X} \right) 
    ~ \\ 
    &+ 
    \Pr{ \sum_{i=1}^t \ltwo{(\Sigma_Z)^{-\frac{1}{2}}\dpos_i}^2 \leq  (1-\varepsilon) 2 t \tr{(\Sigma_Z)^{-1} \Sigma_X}  }.
\end{aligned}
\end{equation}

We can bound the last term using \Cref{lem:conc_quad_abs} once again with $U_1 = \tilde{X}_1$ and $D = (\Sigma_Z)^{-1}\Sigma_X$ to estimate the last probability above. We then have,

\begin{align}
\Ex{\lsscm} &\leq 2 \sum_{t=2}^n \exp \left( t\log n \right) \Ex{ \exp \left(-\frac{1}{4 \kappa^2 K^2_Z} \sum_{i=1}^t \lVert(\Sigma_Z)^{-\frac{1}{2}}\dpos_i \rVert^2_2 \right) } \\ 
& \leq 2 \sum_{t=2}^n \exp \left(t \log n\left( 1 -\frac{(1-\varepsilon)}{2 \kappa^2 K^2_Z \log n} \tr{(\Sigma_Z)^{-1}\Sigma_X} \right) \right) \\
&+ 2 \sum_{t=2}^n  \exp \left( t \log n\left( 1 - 2 c_1\left(\frac{\varepsilon}{K_X^2}\right) \frac{\tr{(\Sigma_Z)^{-1}\Sigma_X}}{\log n \opnorm{ (\Sigma_Z)^{-1}\Sigma_X }} \right) \right) \\
&\leq 4 \sum_{t=2}^{+\infty} e^{-t\omega(1)} = o(1),
\end{align} 
where the last equality is again a consequence of our hypothesis taking $b_{\mathrm{HD}} = 2 \kappa^2$.  Once again we have perfect recovery following Markov's inequality.
\end{proof}

\section{Future work}
\label{sec:conj_future_work}

Besides the open problems already posed in \cite{geo_match_2022}, we propose the following directions for future work:

\begin{direction*}
\bluenew{Although the minimax lower bounds established in this work hold under mild assumptions on the tail of the noise distribution, we were only able to show that they are optimal for $\lss$ and $\gdpi$ by either leaving $d$ fixed or making the assumption that the noise is sub-Gaussian. This motivates the following problem: rigorously establish general conditions under which $\lss$ and $\gdpi$ are expected to be sub-optimal. Two specified cases being:
\begin{enumerate}
    \item In the high-dimensional regime, provide sufficient conditions under which the conjectured signal-to-noise ratios in \Cref{thm:high_dim_lss} and \Cref{thm:high_dim_lssc} are in fact signal-to-noise ratios, and provide an example in which \eqref{eq:lssc_beats_lss} implies \textit{perfect recovery} for $\lssc$ but only \textit{partial recovery} for $\lss$.
    \item Prove or disprove \Cref{conj:mle_beats_lss}.
\end{enumerate}
}
\end{direction*}

\begin{direction*}
 Prove the following conjecture of independent interest:
\begin{conjecture}
\label{conj:size_largest_matching} Given $X_1, \dots, X_n \iid \idist_n$, let $M_r$ be the largest matching in $G(\{X_1, \dots, X_n\},r,d,\enorm)$, where we allow both $r$ and $d$ to scale with $n$. Assume,
\begin{equation}
   \Ex{E(G(n,r,d,\enorm))} = \binom{n}{2} \Pr{\| X_1 - X_2 \|_2^2 < r^2} \tolim{n} +\infty.
\end{equation}
Then:
\begin{equation}
    \Ex{|M_r|} =  \Theta \left( \Ex{E(G(n,r,d,\enorm))} \wedge n \right),
\end{equation}
and moreover the same order holds with high probability for $|M_r|$.
\end{conjecture}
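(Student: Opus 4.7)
The upper bound $\Ex{|M_r|} \leq \Ex{|E(G(n,r,d,\enorm))|} \wedge \lfloor n/2 \rfloor$ is immediate, since any matching has at most $|E|$ edges and covers at most $n$ vertices. For the matching lower bound $\Ex{|M_r|} = \Omega(\Ex{|E|} \wedge n)$, my plan is to refine the argument of \Cref{lem:matching_exp} by counting \emph{isolated edges} at an auxiliary scale $r_0 \leq r$. A pair $\{i,j\}$ is an isolated edge at scale $r_0$ if $\norm{X_i - X_j} < r_0$ and no other $X_k$ lies within distance $r_0$ of either $X_i$ or $X_j$. Write $I_{r_0}$ for the total number of such pairs: these are vertex-disjoint, and since $r_0 \leq r$ they remain edges of the target graph, so $|M_r| \geq I_{r_0}$ deterministically.

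The scale $r_0$ is chosen by edge density. Let $f(s) := \Pr{\norm{X_1 - X_2} < s}$, a nondecreasing and right-continuous function. If $nf(r) \leq c$ I would take $r_0 = r$; otherwise I would pick $r_0$ with $c/2 \leq nf(r_0) \leq c$, where $c > 0$ is a small absolute constant (the hypothesis $\Ex{|E|} \to +\infty$ combined with monotonicity of $f$ guarantees such an $r_0$ exists up to adjustment at discontinuities). At this scale the typical vertex degree is of constant order, and the task reduces to estimating
\begin{equation*}
\Ex{I_{r_0}} \;=\; \binom{n}{2}\,\Ex{\indic{\norm{X_1 - X_2} < r_0}\,(1 - q(X_1, X_2))^{n-2}},
\end{equation*}
where $q(x_1, x_2) := \Pr{X \in B_{\enorm}(x_1, r_0) \cup B_{\enorm}(x_2, r_0)} \leq p(x_1) + p(x_2)$ and $p(x) := \Pr{\norm{X - x} < r_0}$.

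The main obstacle I foresee is showing that the factor $(1 - q(X_1, X_2))^{n-2}$ stays bounded away from zero on a significant portion of $\{\norm{X_1 - X_2} < r_0\}$. Without any regularity on $\idist_n$, the mass of that event may concentrate on ``hotspots'' where $p$ is much larger than its average $f(r_0)$, and there the isolation factor is exponentially small. I would address this by stratifying $\R^d$ according to the level sets $L_j := \{x : p(x) \in (2^{-j}, 2^{-j+1}]\}$ and treating each layer separately. On layers with $2^{-j} = O(1/n)$ one has $(1 - 2p)^{n-2} = \Omega(1)$ and isolated edges appear in proportion to that layer's share of $f(r_0)$. On hotspot layers with $2^{-j} \gg 1/n$ isolated edges are scarce, but the points landing in any ball of radius $r_0/2$ inside such a layer form a clique of $G(n,r,d,\enorm)$ by the triangle inequality, contributing a local matching of half its size; covering the hotspots by such balls and summing local matchings should account for the $\Omega(n)$ contribution coming from the dense regime. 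Making this two-regime bookkeeping uniform in $n$ and $d$, particularly in growing dimension where hotspots can cluster along low-dimensional structures of $\enorm$, is the delicate step and may require an additional geometric assumption such as a doubling property of $\enorm$.

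For the high-probability conclusion I would use bounded differences: resampling a single $X_i$ changes $|M_r|$ by at most $2$, so the Efron--Stein inequality gives $\mathrm{Var}(|M_r|) \leq 2n$, and Chebyshev promotes the estimate $\Ex{|M_r|} = \Omega(\Ex{|E|} \wedge n)$ to a high-probability lower bound whenever $\Ex{|E|} \wedge n \to +\infty$; this is precisely where the hypothesis $\Ex{|E|} \to +\infty$ is used. A matching high-probability upper bound follows from Markov's inequality applied to $|E|$ together with the deterministic cap $|M_r| \leq \lfloor n/2 \rfloor$. The hotspot analysis remains the crux of the problem: the hypothesis $\Ex{|E|} \to +\infty$ alone permits pathological $\idist_n$ for which the standard isolated-edge counting of Penrose-type arguments breaks down, and genuinely new geometric input beyond \Cref{lem:matching_exp} seems necessary.
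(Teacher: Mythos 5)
The paper offers no proof of this statement: \Cref{conj:size_largest_matching} is posed as a direction for future work, so there is no reference argument to compare your sketch against. Your sketch is candid about what is missing, and that candor is warranted, because the hotspot phenomenon you flag is not merely a technical nuisance but a genuine counterexample to the conjecture as written. Take $d \gg \log n$, fix $R = 3r/4$ and $0 < \epsilon' < r/4$, and let $\idist_n$ put mass $1/n$ uniformly on the ball $B(0,\epsilon')$ (a ``hub'') and mass $1 - 1/n$ uniformly on the thin shell $\{x : R - \epsilon' \leq \ltwo{x} \leq R\}$. Every hub point lies within Euclidean distance $r$ of every other sample, while two shell points are within distance $r$ only with probability $\exp(-\Omega(d)) = n^{-\omega(1)}$. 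Up to negligible corrections, $G$ is then a split graph in which the $k$ hub points (with $\Ex{k}=1$) form a clique completely joined to an independent set of shell points. One checks $\Ex{|E(G)|} \sim n \to +\infty$, so the hypothesis holds, yet the largest matching is at most $k$ plus the vanishingly rare shell--shell edges, giving $\Ex{|M_r|} = O(1)$ against $\Ex{|E(G)|}\wedge n = \Theta(n)$; the same gap persists in probability.

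Your two-regime scheme cannot close this. At the calibrated scale $r_0 = r$ (here $n f(r_0) \asymp 1$ already) the isolated-edge count vanishes, because the hub point spoils isolation wherever it lands; and the hotspot layer of your stratification is exactly the hub, whose local clique has expected size $O(1)$, not $\Omega(n)$. The isolated-edge idea and the calibration of $r_0$ are the right tools when one has a density-ratio condition such as assumption~(a) of \Cref{mod:low_dim}---this is essentially the mechanism behind \Cref{lem:matching_exp}---and your Efron--Stein variance bound, Chebyshev transfer, and Markov upper bound are routine and correct. But as stated the conjecture requires an additional regularity hypothesis on $\idist_n$ (a bounded density ratio, or a doubling condition that forbids the hub construction) beyond $\Ex{|E(G)|} \to +\infty$. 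Your instinct that genuinely new geometric input is needed is right; the first task is to identify the correct assumption rather than to prove the statement as given.
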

That is, the size of the largest matching in a random geometric graph is of the order of the minimum between the expected number of edges of $G(n,r,d,\enorm)$ and $n$. Proving the high-probability version of this conjecture in the regime where $d = \Theta(\log n)$ would allow us to partially recover the lower bound for the logarithmic regime (\ie $d = \Theta(\log n)$) from \cite{geo_match_2022}, following a more general framework, as we outline in \Cref{a:log_reg}.
\end{direction*}

\appendix

\begin{appendices}

\section{A technical result on estimation in groups}
\label{a:estim_group}

We prove the equality in \eqref{eq:minimax_bayes_id} under a more general setting.

\begin{lemma}
\label{lem:minimax_bayes}
    Let $\{P_g \}_{g \in G}$ be a family of probability measures on some measure space $\left(\mathcal{Z}, \mathcal{A}\right)$, indexed by $G$ a finite group. Assume that there exists a family of (measurable) mappings $\{ T_g: \mathcal{Z} \to \mathcal{Z}; ~ g \in G \}$ such that for any $g,h \in G$,
    \begin{equation}
    \label{eq:inv_map}
        Z \sim P_g \mbox{ implies } T_h(Z) \sim P_{gh}.
    \end{equation}
    Let $d_G:G \times G \to \R_{+}$ be a distance metric which is invariant under right multiplication by a group element, \textit{i.e.}, for any $ g,g',h \in G$,
    \begin{equation}
        d_G(g,g') = d_G(gh,g'h).
    \end{equation}
    Let $g^* \sim \mathrm{Unif}(G)$ be a random group element and $Z \sim P_{g^*}$. Then,
    \begin{equation}
        \inf_{\hat{g}} \max_{g \in G} \Ex{d_G(\hat{g}(Z),g^*)|g^* = g} = \inf_{\hat{g}}\mathbb{E}\left[d_G(\hat{g}(Z),g^*)\right],
    \end{equation}
where the infimum is taken over all (possibly non-deterministic) estimators $\hat{g}:\mathcal{Z} \to G$ of $g^*$.
\end{lemma}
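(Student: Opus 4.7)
The plan is to establish the two inequalities separately. The easy direction, $\inf_{\hat g}\max_{g\in G}\mathbb{E}[d_G(\hat g(Z),g^*)\mid g^*=g]\geq \inf_{\hat g}\mathbb{E}[d_G(\hat g(Z),g^*)]$, is immediate because the maximum of a function over $G$ dominates its uniform average and $g^*\sim\mathrm{Unif}(G)$. The substantive content of the lemma is the reverse inequality, which I would prove by a symmetrization argument exploiting the equivariance assumption~\eqref{eq:inv_map}.

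Given any candidate estimator $\hat g$, the plan is to construct a randomized estimator $\tilde g$ defined by
$$\tilde g(Z) := \hat g(T_H(Z))\, H^{-1},$$
where $H \sim \mathrm{Unif}(G)$ is drawn independently of $(g^*,Z)$. The key step is to check that the pointwise risk of $\tilde g$ is constant in $g$ and equals the Bayes risk of $\hat g$ under the uniform prior. To do this I would fix $g \in G$, apply right-invariance of $d_G$ to rewrite $d_G(\tilde g(Z),g) = d_G(\hat g(T_H(Z)),\, gH)$; then condition on $H = h$ and invoke~\eqref{eq:inv_map} to note that $T_h(Z) \sim P_{gh}$ whenever $g^* = g$; and finally average over $h \in G$, using the bijection $h \mapsto gh$ of $G$ to recognize the uniform-prior Bayes risk $\frac{1}{|G|}\sum_{g'\in G}\mathbb{E}_{W\sim P_{g'}}[d_G(\hat g(W),g')]$ of $\hat g$.

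Once $\mathbb{E}[d_G(\tilde g(Z),g^*) \mid g^* = g] = \mathbb{E}[d_G(\hat g(Z),g^*)]$ holds for every $g$, the chain
$$\inf_{\hat g'} \max_{g \in G} \mathbb{E}[d_G(\hat g'(Z),g^*) \mid g^* = g] \leq \max_{g \in G} \mathbb{E}[d_G(\tilde g(Z),g^*) \mid g^* = g] = \mathbb{E}[d_G(\hat g(Z),g^*)],$$
followed by taking the infimum over $\hat g$, closes the argument. I do not expect a real obstacle here: the symmetrization is classical, the only subtleties are that $\tilde g$ must be permitted to be randomized (which the lemma allows by its choice of infimum class) and that all averaging over $h$ reduces to honest finite sums because $G$ is finite, so no measurability issues arise.
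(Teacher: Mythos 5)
Your proof is correct and follows essentially the same symmetrization argument as the paper: the paper also constructs a randomized estimator $\check{g}(Z) := \hat{g}(T_{\tilde h}(Z))\,\tilde h^{-1}$ with $\tilde h\sim\mathrm{Unif}(G)$ independent, applies right-invariance of $d_G$ to rewrite $d_G(\check g(Z),g^*)=d_G(\hat g(T_{\tilde h}(Z)),g^*\tilde h)$, and observes that $g^*\tilde h\sim\mathrm{Unif}(G)$ to identify the constant pointwise risk with the Bayes risk. The only differences are notational (your $H$ for their $\tilde h$, your $\tilde g$ for their $\check g$).
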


\begin{proof}
    It is always the case that,
    \begin{equation}
        \inf_{\hat{g}} \max_{g \in G} \Ex{d_G(\hat{g}(Z),g^*)|g^* = g} \geq \inf_{\hat{g}}\mathbb{E}\left[d_G(\hat{g}(Z),g^*)\right].
    \end{equation}
As for the other direction, let $\hat{g}$ be some (possibly non-deterministic) estimator of $g^*$. Letting $\tilde{h} \sim \text{Unif}(G)$ be sampled independently from $Z$ and $g^*$, set $\nnoise = T_{\tilde{h}}(Z)$. Note that $\nnoise \sim P_{g^*\tilde{h}}$ by \eqref{eq:inv_map}. Define a new non-deterministic estimator:
\begin{equation}
    \tilde{g}(Z) := \hat{g}(\nnoise) \tilde{h}^{-1}. 
\end{equation}
Due to the invariance of $d_G$,
\begin{equation}
    d_G(\tilde{g}(Z),g^*) = d_G(\hat{g}(\nnoise),g^*\tilde{h}).
\end{equation}
Also, conditioned on $g^*$, $g^*\tilde{h} \sim \text{Unif}(G)$. It follows that the conditional distribution of $d_G(\hat{g}(\nnoise),g^*\tilde{h})$ given $g^*$ is equal to the (unconditional) distribution of $d_G(\hat{g}(Z),g^*)$. So that for any $g \in G$,
\begin{equation}
    \Ex{d_G(\tilde{g}(Z),g^*)|g^*=g} = \Ex{d_G(\hat{g}(\nnoise),g^*\tilde{h})|g^*=g} = \Ex{d_G(\hat{g}(Z),g^*)},
\end{equation}
and so,
\begin{equation}
    \max_{g \in G} \Ex{d_G(\tilde{g}(Z),g^*)|g^*=g} \leq \Ex{d_G(\hat{g}(Z),g^*)}.
\end{equation}
From this it follows that,
\begin{equation}
    \inf_{\hat{g}} \max_{g \in G} \Ex{d_G(\hat{g}(Z),g^*)|g^* = g} \leq \inf_{\hat{g}}\mathbb{E}\left[d_G(\hat{g}(Z),g^*)\right].
\end{equation}

\end{proof}
In the particular setting of \Cref{lem:gen_inf_theo_lb} we have that \Cref{lem:minimax_bayes} holds with $G = S_n$, $\mathcal{Z} = (\R^d)^{n}$, $P_{\pi}$ the law of $(Y_1, \dots, Y_n)$ where,
\begin{equation}
    Y_i := x_{\pi(i)} + \sigma \nnoise_i, ~ i \in [n],
\end{equation}
for $\nnoise_1, \dots, \nnoise_n \iid Q$, $d_G$ the Hamming distance and,
\begin{equation}
    T_{\alpha}(Y_1,\dots,Y_{n}) := (Y_{\alpha(1)},\dots,Y_{\alpha(n)}).
\end{equation}

\pagebreak

\section{Comparisons with other estimators}
\label{a:other_estim}

\subsection{Maximum likelihood estimator}

We provide a proof of \Cref{lem:mle_beats_lss_2_cyc}.

\begin{lemma}
    Assuming $\tpi = \mathrm{Id}$ and $\mathcal{Q}$ is absolutely continuous, we have,
    \begin{equation}
        \begin{split}
            \Pr{W^{\mathcal{Q},\sigma}_{1,2} + W^{\mathcal{Q},\sigma}_{2,1} > W^{\mathcal{Q},\sigma}_{1,1} + W^{\mathcal{Q},\sigma}_{2,2}} \leq \Pr{\ltwo{X_1-Y_2}^2 + \ltwo{X_2-Y_1}^2 < \ltwo{X_1-Y_1}^2 + \ltwo{X_2-Y_2}^2}.
        \end{split}
    \end{equation}
In other words, the probability of $(1 ~ 2)$ being an augmenting cycle for $\mle$ is never larger than that of $(1 ~ 2)$ being an augmenting cycle for $\lss$.
\end{lemma}
\begin{proof}
    Let $\pi_0 = \mathrm{Id}$ and $\pi_1 = (1 ~ 2)$. Due to the sequence of initial positions and noise being i.i.d., we have that,
\begin{equation}
    \P_{\pi_0}\left(W^{\mathcal{Q},\sigma}_{1,2} + W^{\mathcal{Q},\sigma}_{2,1} > W^{\mathcal{Q},\sigma}_{1,1} + W^{\mathcal{Q},\sigma}_{2,2}\right) = \P_{\pi_1}\left(W^{\mathcal{Q},\sigma}_{1,1} + W^{\mathcal{Q},\sigma}_{2,2} > W^{\mathcal{Q},\sigma}_{1,2} + W^{\mathcal{Q},\sigma}_{2,1}\right),
\end{equation}

where $\P_{\pi_i}$ indicates a probability assuming $\pi_i$ is the correct assignment. Denote this quantity by $\alpha$. If we had,

\begin{equation}
    \label{eq:mle_smaller_level}
        \begin{split}
            \P_{\pi_0}\left(W^{\mathcal{Q},\sigma}_{1,2} + W^{\mathcal{Q},\sigma}_{2,1} > W^{\mathcal{Q},\sigma}_{1,1} + W^{\mathcal{Q},\sigma}_{2,2}\right) > \P_{\pi_0}\left(\ltwo{X_1-Y_2}^2 + \ltwo{X_2-Y_1}^2 < \ltwo{X_1-Y_1}^2 + \ltwo{X_2-Y_2}^2\right),
        \end{split}
\end{equation}

then we could construct an $\alpha$-level test for $\pi_0$ against $\pi_1$ by looking at whether $\ltwo{X_1-Y_2}^2 + \ltwo{X_2-Y_1}^2 < \ltwo{X_1-Y_1}^2 + \ltwo{X_2-Y_2}^2$ happens (reject $\pi_0$) or not (fail to reject $\pi_0$). However, the analogous test obtained by looking at $W^{\mathcal{Q},\sigma}_{i,j}$ is also an $\alpha$-level test. Hence by the Neyman-Pearson lemma \cite{neyman1933}, no other test can be strictly more powerful than it. But this contradicts,
\begin{equation}
        \begin{split}
            \P_{\pi_1}\left(W^{\mathcal{Q},\sigma}_{1,2} + W^{\mathcal{Q},\sigma}_{2,1} > W^{\mathcal{Q},\sigma}_{1,1} + W^{\mathcal{Q},\sigma}_{2,2}\right) < \P_{\pi_1}\left(\ltwo{X_1-Y_2}^2 + \ltwo{X_2-Y_1}^2 < \ltwo{X_1-Y_1}^2 + \ltwo{X_2-Y_2}^2\right).
        \end{split}
\end{equation}
which follows from \Cref{eq:mle_smaller_level} and the exchangeability of the distribution. Hence \Cref{eq:mle_smaller_level} cannot hold and we have the desired result.
\end{proof}

\subsection{Greedy distance}

Following the discussion in the Appendix of \cite{geo_match_2022} for the greedy distance algorithm, which matches each point to its nearest neighbor (with an error declared if the neighbor is not available anymore), we have that the number of mistakes made by such algorithm is bounded as:
\begin{equation}
    \label{eq:hm_gd}
    \Ex{\hamdist{\gdpi}{\tpi}} \leq n^2 \Pr{\ltwo{Y_1-X_1} \geq \ltwo{Y_2-X_1}}.
\end{equation}

Under the weaker assumption that $\Ex{\ltwo{\nnoise_1}^d} < +\infty$, we have the following analogous estimate to \Cref{lem:estim_aug_2_cycle}:
\begin{lemma}
    Under the assumptions of \Cref{lem:estim_aug_2_cycle} with $d$ fixed, we have,
    \begin{equation}
    \lim_{\sigma \to 0^+} \frac{\Pr{\ltwo{Y_1-X_1} \geq \ltwo{Y_2-X_1}}}{\sigma^d} = \rho_d \Ex{f_\idist(X_1)} \Ex{\ltwo{\nnoise_1}^d}  
    \end{equation}
\end{lemma}
\begin{proof}
    The proof is the same \Cref{lem:estim_aug_2_cycle}, with the only difference that now,
    \begin{equation}
        \begin{split}
            \Pr{\ltwo{Y_1-X_1} \geq \ltwo{Y_2-X_1}} = \Pr{\ltwo{Z_1} \geq \ltwo{Z_2 - \Delta_X}, \ltwo{Z_1} > 0} 
        \end{split}
    \end{equation}
hence, the pointwise limit of the conditional probability in \eqref{eq:cond_limit} becomes:
\begin{equation}
    \lim_{\sigma \to 0} \frac{\rho_d\ltwo{Z_1}^d}{\sigma^d}\frac{\Pr{\ltwo{Z_1} \geq \ltwo{Z_2 - \Delta_X}  | Z_1,Z_2}}{\rho_d\ltwo{Z_1}^d}\indic{\ltwo{Z_1} > 0} = \rho_d \ltwo{\nnoise_1}^d f_{\Delta_X}(0)\indic{\ltwo{Z_1} > 0}  \text{ a.s.}
\end{equation}
\end{proof}

This implies that the greedy distance algorithm satisfies $\Ex{\hamdist{\gdpi}{\tpi}} \lesssim n^2 \sigma^d$ hence it is also minimax optimal under \modlds, but with a weaker requirement on the tail of the noise. Moreover, by the triangle inequality, we have that,
\begin{equation}
 2^{-d}\Ex{\ltwo{\tilde{Z}_1-\tilde{Z}_2}^d} \leq \Ex{\ltwo{\tilde{Z}_1}^d},   
\end{equation}
hence the upper bounds for the error of LSS is strictly better, when the mass of augmenting 2-cycles dominates the error.
Overall this shows that the equivalence between the rates for LSS and the greedy matching algorithm for low dimensions also holds under greater assumptions, extending the observation in \cite{geo_match_2022} for the Gaussian model.
One crucial thing to note however, is that in the expression:
\begin{equation}
 \rho_d \Ex{f_\idist(X_1)} \Ex{\ltwo{\nnoise_1}^d},   
\end{equation}
the dominant terms in terms of the scaling will be $\rho_d$ and $\Ex{\ltwo{\nnoise_1}^d}$. The case in which $\nnoise$ is sub-Gaussian corresponds precisely to both being comparable having order $\Theta(d^{\frac{d}{2}})$. This comparability in the sub-Gaussian regime is what allows the error rate of the certain models with slowly growing dimension to still be nearly sharp as discussed in \Cref{sec:examples}. It is easy to see however, that a sub-Gaussian tail is essentially the best we can hope for, as weakening this assumption would imply that $\Ex{\ltwo{\nnoise_1}^d}$ grows faster than $O(d^{\frac{d}{2}})$. Hence as soon as $d$ starts growing with $n$, even if slowly, we expect both the greedy distance and the LSS estimators to not give sharp rates if the noise is worse than sub-Gaussian.

For $d$ growing in a way which is $\Omega(\log(n))$, we already know from \cite{geo_match_2022} that sufficient conditions for greedy distance derived from \eqref{eq:hm_gd} can be strictly worse: $\sigma^2 = O\left(\sqrt{\frac{d}{\log(n)}}\right)$ against $\sigma^2 = O(\frac{d}{\log(n)})$ for the MLE (which is the LSS for the Gaussian model).

\pagebreak

\section{Negative results in probability}
\label{a:high_prob_lb}

In this section we show how one can use the same strategy from \Cref{sec:lower_bounds_rggs} to derive high-probability lower bounds for $\lssm$ that recover the results from \cite{geo_match_2022} up to $n^{o(1)}$ factors when $d = o(\log n)$, and also how it can be used to establish negative results that complement the ones from \cite{wang2022random}.

\subsection{high-probability lower bounds on LSS}
\label{sec:lss_lb}

Our goal in this section is to prove the following high-probability analogue of \Cref{thm:lb_low_d} which partially recovers the lower bound from \cite{geo_match_2022} when $d = o(\log n)$, under greater generality:
\begin{theorem}[high-probability lower bound on $\lssm$]
\label{thm:hp_lss_lb}
Consider a model as in \eqref{eq:final_pos} with $X_1 \sim \idist_n$ following assumption \ref{cond:i_pos_low_dim} from \Cref{mod:low_dim} for each $n$ for some $R_d, \enorm, \gamma$ and $\beta$, with the last two constants not depending on $n$. Assume also that $\noise_1 = \sigma \nnoise_1$, where $\sigma > 0$, $\nnoise_1, \nnoise_2 \iid \ndist_n$ a sequence of distributions in $\R^d$ such that,
\begin{equation}
\label{eq:q_lb_cond}
     q_{R_d}(\ndist) = q_{R_d}(\ndist_n,\enorm) := \inf_{\norm{v}=1} \Pr{\binner{\nnoise_1 - \nnoise_2}{v} \geq R_d} \geq e^{-\beta d},
\end{equation}
for the same $R_d$ and $\beta$ as in the assumption on $\idist_n$. If $e^{-10 \beta d}\sigma^d n^2 \tolim{n} +\infty$ then with high probability,
\begin{equation}
\label{eq:lssm_lb}
       \lssm \geq \frac{\gamma^2}{64} e^{-7\beta d} \left( (n^2 \sigma^d) \wedge n \right).
\end{equation}
\end{theorem}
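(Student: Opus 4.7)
The plan is to combine the matching construction underlying \Cref{lem:matching_exp} with a two-step concentration argument (positions, then noise) and a combinatorial optimality lemma for $\lss$. First, I would assume without loss of generality that $\tpi=\mathrm{Id}$ (by the invariance argument of \Cref{lem:minimax_bayes}) and set $r=R_d\sigma$. Revisiting the construction in the proof of \Cref{lem:matching_exp}, partition a ball $B_d$ of $\idist$-mass at least $\gamma$ into $k_n=\Omega((R_d/r)^d)$ disjoint balls of radius $r/2$, and let $I_n$ be the number of these small balls containing exactly two of the $X_j$'s. Each such ball gives a vertex-disjoint edge of length at most $r$, so $|M_r|\geq I_n$, and the lemma's proof already yields $\mathbb{E}[I_n]\geq \tfrac{\gamma^2}{16}e^{-6\beta d}((n^2\sigma^d)\wedge n)$. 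The hypothesis $e^{-10\beta d}n^2\sigma^d\to\infty$ makes this expectation diverge fast enough that a standard concentration argument, such as Chebyshev with a variance estimate for the multinomial ball counts, or bounded differences applied to $I_n$ as a $2$-Lipschitz function of the sample, gives $I_n\geq \mathbb{E}[I_n]/2$ with high probability.

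Second, I would condition on $X_1,\ldots,X_n$ and analyze the augmenting event of the $2$-cycle $(a,b)$ associated to each good ball. This cycle is augmenting (with respect to $\mathrm{Id}$) iff $\langle X_b-X_a,\nnoise_a-\nnoise_b\rangle\geq \|X_a-X_b\|_2^2/\sigma$, and with $v:=(X_b-X_a)/\|X_b-X_a\|$ this contains $\{\langle v,\nnoise_a-\nnoise_b\rangle\geq R_d\}$ (the ratio $\|X_a-X_b\|_2/\|X_b-X_a\|$ is controlled by norm equivalence, which affects only a constant that can be absorbed into $\beta$ in the definition of $q_{R_d}\geq e^{-\beta d}$). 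Hence this event has conditional probability at least $q_{R_d}(\ndist)\geq e^{-\beta d}$ given $X_1,\ldots,X_n$, and since distinct good balls involve disjoint index pairs, the corresponding events are conditionally independent. Letting $N$ count the good balls whose associated $2$-cycle is augmenting, a conditional Chernoff bound delivers $N\geq \tfrac{1}{2}e^{-\beta d}I_n$ with high probability (since $e^{-\beta d}I_n\to\infty$ on the good event of the previous paragraph), and combining the two bounds yields $N\geq \tfrac{\gamma^2}{64}e^{-7\beta d}((n^2\sigma^d)\wedge n)$ with high probability. By construction these $N$ pairs form a matching of strictly augmenting $2$-cycles.

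Third, I would prove the deterministic combinatorial statement: for any matching $M$ of strictly augmenting $2$-cycles with respect to $\mathrm{Id}$, $\hamdist{\lss}{\mathrm{Id}}\geq |M|$. Arguing by contradiction, if $\hamdist{\lss}{\mathrm{Id}}<|M|$ then at most $|M|-1$ of the $2|M|$ endpoints of $M$ are non-fixed by $\lss$, so by a simple counting at least one edge $\{a,b\}\in M$ has $\lss(a)=a$ and $\lss(b)=b$; swapping $a,b$ inside $\lss$ changes the objective by exactly the augmenting quantity of $(a,b)$, which is strictly positive almost surely thanks to the continuity of $\idist$, contradicting the optimality of $\lss$. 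Applying this with the matching of size $N$ produced above gives $\lssm\geq N$, hence \eqref{eq:lssm_lb}.

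The main obstacle is the combinatorial step in the last paragraph: it is the substantive observation converting ``many augmenting $2$-cycles arranged as a matching'' into ``many mistakes of $\lss$'', and it is precisely what distinguishes a high-probability lower bound for the specific estimator $\lss$ from the minimax-over-estimators bound of \Cref{thm:lb_low_d}. Secondary technical nuisances are the concentration of $I_n$ for growing $d$ (which may call for either a careful variance/covariance computation using the multinomial structure of the ball counts, or a de-Poissonization argument) and the interplay between $\|\cdot\|$ and $\|\cdot\|_2$ in the augmenting inequality; both can be handled with standard arguments and affect only constants inside the exponential.
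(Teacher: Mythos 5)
Your proposal is correct and follows essentially the same route as the paper's proof, which combines \Cref{lem:matching_hp} (the high-probability analogue of \Cref{lem:matching_exp}, established via a second-moment argument for the ball count $I_n$) with \Cref{lem:lss_gen_low_bound} (conditioning on the positions and applying a concentration bound to the binomial number of augmenting pairs inside $M_r$), invoking \Cref{lem:gaug_lb} for the deterministic combinatorial step that you re-derive in your third paragraph, and then specializing $r=R_d\sigma$. Two small points of caution: the bounded-differences alternative you offer for concentrating $I_n$ needs $\Ex{I_n}\gg\sqrt{n}$, which the hypothesis $e^{-10\beta d}\sigma^d n^2\to+\infty$ does not guarantee, so the Chebyshev/second-moment route of \Cref{lem:matching_hp} is the one that actually closes the argument; and your remark that the norm-equivalence ratio between $\enorm$ and $\ltwo{\,\cdot\,}$ in the augmenting inequality ``affects only a constant that can be absorbed into $\beta$'' is not quite accurate, since that ratio can grow with $d$ --- the paper's \Cref{lem:lss_gen_low_bound} quietly takes the relevant norm to be Euclidean, and a careful version of your step 2 should do the same rather than wave at norm equivalence.
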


For brevity we omit the dependence on $n$ of $\idist_n$ and $\ndist_n$.
We already know that if we assume $d$ and $\idist$ to be fixed with the latter having a bounded continuous density, assumption \ref{cond:i_pos_low_dim} is verified. One can also verify that if $\ndist$ is fixed with a bounded continuous density then $q_{s^*}(\ndist) > 0$ for some $s^* > 0$, and thus \eqref{eq:q_lb_cond} also holds, thus giving us a high-probability lower bound of order $(n^2 \sigma^d) \wedge n$ for $\lssm$. Similarly, if $\idist$ and $\ndist$ follow the Gaussian model and $d = o(\log n)$ one can verify that the assumptions of \Cref{thm:hp_lss_lb} also hold and obtain a high-probability lower bound of order $(n^{(2-o(1))} \sigma^d) \wedge n^{(1-o(1))}$ for $\lssm$.

As noted previously, when dealing with $\lssm$, augmenting $2$-cycles are of particular interest as they seem to dominate the total mass of augmenting $t$-cycles in the case of fixed dimension when $\sigma^d = o\left(\frac{1}{n}\right)$, as seen in \cite{geo_match_2022} and as suggested by the proof of \Cref{thm:lss_up_low_d}. As in \cite{geo_match_2022}, we let,
\begin{equation}
 \gaug = \gaug\left(n; X_1, \dots, X_n, Y_1, \dots, Y_n \right),   
\end{equation}
be the graph of augmenting $2$-cycles characterized by:

\begin{equation}
    V(\gaug) = [n] \text{ and } E(\gaug) = \{ \{i,j \} \subseteq [n], \, (i,j) \text{ is an augmenting cycle}\}.
\end{equation}

Proving the existence of large matchings in $\gaug$ can be used to prove lower bounds on $\Ex{\lssm}$ due to the following result from \cite{geo_match_2022} (Proposition 4.1):

\begin{lemma}
\label{lem:gaug_lb}
Let $M^*$ be a largest matching in $\gaug$. We have,
\begin{equation}
    \lssm \geq |M^*|.
\end{equation}
\end{lemma}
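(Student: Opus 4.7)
The plan is to establish, for each edge $\{i,j\}$ in the matching $M^*$, that the estimator $\lss$ must disagree with the planted permutation on at least one of the two vertices $i$ and $j$. As is standard and used elsewhere in the excerpt, I would assume without loss of generality that $\tpi = \mathrm{Id}$, so that $\lssm = |\{k \in [n] : \lss(k) \neq k\}|$. Once the per-edge claim is in hand, the lemma follows almost immediately: because $M^*$ is a matching its edges are pairwise vertex-disjoint, so the set of indices on which $\lss$ errs must contain at least one vertex from each edge of $M^*$, giving $\lssm \geq |M^*|$.

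To prove the per-edge claim I would argue by contradiction via a local exchange. Suppose $\{i,j\} \in M^*$ but $\lss(i) = i$ and $\lss(j) = j$. Define $\pi'$ from $\lss$ by swapping the images of $i$ and $j$: $\pi'(i) = j$, $\pi'(j) = i$, and $\pi'(k) = \lss(k)$ otherwise. The only change in the inner-product objective of \eqref{eq:standard_mle} between $\lss$ and $\pi'$ comes from the $i$-th and $j$-th summands, and since $\{i,j\} \in E(\gaug)$ means
\[
\langle X_i, Y_j \rangle + \langle X_j, Y_i \rangle \geq \langle X_i, Y_i \rangle + \langle X_j, Y_j \rangle,
\]
the permutation $\pi'$ achieves a value of this objective at least as large as $\lss$. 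Under the continuous-density assumptions on $\idist$ the inequality is strict almost surely, so $\pi'$ would be strictly preferred, contradicting the optimality of $\lss$.

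The only real subtlety is the handling of ties: the augmenting condition is defined by a weak inequality, so the exchange argument as stated only shows that $\pi'$ is weakly at least as good as $\lss$. This is the main (essentially the only) obstacle, and it is resolved by restricting to the almost-sure event where no tie in the LSS objective occurs; under the continuity assumptions on $\idist$ this event has full probability. Alternatively one can phrase the per-edge claim for every element of the $\arg\min$ and pick $\lss$ to be any measurable selection from it. In either case, summing the per-edge claim over the $|M^*|$ vertex-disjoint edges of $M^*$ yields $\lssm \geq |M^*|$.
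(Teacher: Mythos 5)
The paper does not prove \Cref{lem:gaug_lb} itself; it simply cites Proposition~4.1 of \cite{geo_match_2022}. So there is no internal proof to compare against, and your argument should be judged on its own terms. Your core argument --- the per-edge local exchange --- is the standard one and is essentially correct: if $\{i,j\}\in M^*$ and $\lss$ fixed both $i$ and $j$, then composing with the transposition $(ij)$ can only increase the LSS objective by the augmenting condition, contradicting the (strict) optimality of $\lss$; since the edges of $M^*$ are vertex-disjoint, summing the per-edge bound gives $\lssm\geq|M^*|$.

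You are right that ties are the only real subtlety, and your first resolution (restrict to the full-probability event of no ties) is the correct one and is enough for everything this lemma is used for in \Cref{a:high_prob_lb}, where all statements are probabilistic. But be aware that the lemma as written reads as a deterministic statement, and strictly speaking it can fail on a measure-zero set: if the augmenting inequality for $\{i,j\}$ is an \emph{equality}, the identity may be a valid element of the argmin and your contradiction evaporates. Your second proposed fix --- ``phrase the per-edge claim for every element of the argmin and pick $\lss$ as a measurable selection'' --- does \emph{not} repair this: at a tie the identity itself can lie in the argmin, and the per-edge claim is then simply false for that element, so no selection helps. So keep the almost-sure version; drop the alternative. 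Also note that the non-degeneracy you need is that $\|X_i-X_j\|_2^2 + \langle X_i-X_j, Z_i-Z_j\rangle \neq 0$ almost surely, which follows from $\idist$ having a density and the independence of $X$ and $Z$; this is indeed satisfied under the models where the lemma is invoked.
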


In order to show the existence of a large matching on $\gaug$, instead of using an enhanced second moment argument as in \cite{geo_match_2022}, we use the fact that a fraction of edges from the largest matching in $G(n,d,r,\enorm)$ will also be augmenting cycles thus enabling us to move from a large matching in the random geometric graph induced by the initial positions to a large matching on the graph of augmenting $2$-cycles. Since we are dealing with $\lss$, in this subsection we assume that $\tpi = \text{Id}$ as in \Cref{sec:up_bounds}. Letting $\nnoise_1, \nnoise_2 \iid \ndist$, set:
\begin{equation}
\label{eq:q_def}
    q_s(\ndist) = q_s(\ndist, \enorm) := \inf_{\norm{v}=1} \Pr{\binner{\nnoise_1 - \nnoise_2}{v} \geq s}.
\end{equation}
We have the following high-probability analogue of \Cref{lem:gen_inf_theo_lb} for $\lssm$:
\begin{lemma}
\label{lem:lss_gen_low_bound}
Let $x_1, \dots, x_n \in \R^d$ be distinct and set:
\begin{equation}
    Y_i := x_i + \sigma \nnoise_i, ~ i \in [n],
\end{equation}
where $\nnoise_1, \dots, \nnoise_n \iid \ndist$ and $\sigma > 0$. Let $q_{s}(\ndist)$ be as in \eqref{eq:q_def} above and $M_r = M_r(\{x_1,\dots,x_n\})$ the largest matching in $G(\{x_1, \dots, x_n \}, r, d, \ltwo{\, \cdot \,})$. If $|M_r|q_{\frac{r}{\sigma}}(\ndist) \tolim{n} +\infty$, then,
\begin{equation}
    \lssm \geq \frac{|M_r|q_{\frac{r}{\sigma}}(\ndist)}{2}
\end{equation}
with high probability.
\end{lemma}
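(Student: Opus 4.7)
The plan is to use \Cref{lem:gaug_lb} and reduce the problem to producing a large matching in $\gaug$. The main idea is that every edge $\{i,j\}$ of the Euclidean matching $M_r$ has a substantial probability of being an augmenting $2$-cycle, and that for different edges of $M_r$ these events are independent. Since $M_r$ is a matching, the resulting augmenting $2$-cycles assemble into a matching of $\gaug$, and a standard concentration argument converts the expectation into a high-probability lower bound on its size.

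More concretely, first I would rewrite the defining inequality for an augmenting $2$-cycle $(i,j)$: using the cycle characterization from \eqref{eq:aug_cycle_lss} with $t=2$, the cycle is augmenting iff
\begin{equation}
    \binner{(x_j-x_i)/\ltwo{x_j-x_i}}{\nnoise_i - \nnoise_j} \geq \ltwo{x_j-x_i}/\sigma.
\end{equation}
Since for $\{i,j\}\in M_r$ one has $\ltwo{x_j-x_i}<r$, the right-hand side is strictly smaller than $r/\sigma$, and hence, because $x_1,\dots,x_n$ are deterministic, the definition of $q_{r/\sigma}(\ndist)$ immediately gives
\begin{equation}
    \Pr{\{i,j\} \in E(\gaug)} \geq q_{r/\sigma}(\ndist).
\end{equation}

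Next, because $M_r$ is a matching, the edges $\{i,j\}\in M_r$ involve pairwise disjoint vertex sets. The indicator that $(i,j)$ is augmenting depends only on $\nnoise_i,\nnoise_j$, so these indicators are mutually independent across edges of $M_r$. Let
\begin{equation}
    N := \sum_{\{i,j\}\in M_r}\indic{\{i,j\}\in E(\gaug)}.
\end{equation}
Then $N$ is a sum of independent Bernoullis with $\Ex{N}\geq |M_r|q_{r/\sigma}(\ndist)$, and by a standard Chernoff bound,
\begin{equation}
    \Pr{N \leq \tfrac{1}{2}|M_r|q_{r/\sigma}(\ndist)} \leq \exp\!\left(-\tfrac{1}{8}|M_r|q_{r/\sigma}(\ndist)\right),
\end{equation}
which goes to $0$ under the hypothesis $|M_r|q_{r/\sigma}(\ndist)\to +\infty$.

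Finally, the augmenting $2$-cycles counted by $N$ are supported on pairwise disjoint vertex sets (inherited from $M_r$), so they constitute a matching in $\gaug$, giving $|M^\star|\geq N$. Applying \Cref{lem:gaug_lb} yields $\lssm \geq |M^\star| \geq N \geq |M_r|q_{r/\sigma}(\ndist)/2$ with high probability, which is the desired conclusion. There is no real technical obstacle: everything rests on the elementary characterization of augmenting $2$-cycles, the disjointness of matching edges (which buys both independence and the ability to lift to a matching in $\gaug$), and a Chernoff bound. The only point requiring mild care is the reduction of the augmenting condition to a uniform lower bound via $q_{r/\sigma}(\ndist)$, which is exactly what the definition in \eqref{eq:q_def} was designed for.
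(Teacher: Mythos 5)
Your proof is correct and follows essentially the same route as the paper: identify the augmenting $2$-cycle condition, lower-bound its probability by $q_{r/\sigma}(\ndist)$ for each edge of $M_r$, exploit the disjointness of matching edges to get independence and to lift the collection of augmenting $2$-cycles to a matching in $\gaug$, and finish with a concentration bound plus \Cref{lem:gaug_lb}. The only (inconsequential) difference is that you use a Chernoff bound where the paper settles for Chebyshev's inequality applied to the coupled binomial $H$.
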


\begin{proof}

Let $\gaug = \gaug\left(n; x_1, \dots, x_n, Y_1, \dots, Y_n \right)$ be the graph of augmenting $2$-cycles and let $M^*$ denote its largest matching. Given $\{ i, j \} \in M_r$,

\begin{equation}
\begin{split}
    \Pr{\{i,j\} \text{ is an edge on } \gaug } &= \Pr{ \binner{\nnoise_i - \nnoise_j}{\frac{x_i - x_j}{\norm{x_i - x_j}}} \geq \frac{\|x_i - x_j\|}{\sigma}} \\
    &\geq \Pr{ \binner{\nnoise_i - \nnoise_j}{\frac{x_i - x_j}{\norm{x_i - x_j}}} \geq \frac{r}{\sigma}} \\
    &\geq q_{\frac{r}{\sigma}}(\ndist).
\end{split}
\end{equation}

Let $H \sim \Bin \left(|M_r|, q_{\frac{r}{\sigma}}(\ndist) \right)$. Using a simple coupling argument we get the bound:

\begin{equation}
    |M^*| \geq \sum_{e \in M_r} \indic{e \in \gaug} \geq H.
\end{equation}

Given our hypothesis we have that,
\begin{equation}
    \Pr{\babs{H-\Ex{H}} \geq \varepsilon \Ex{H}} \leq \frac{|M_r|q_{\frac{r}{\sigma}}(\ndist)(1-q_{\frac{r}{\sigma}}(\ndist))}{\varepsilon^2 |M_r|^2 q^2_{\frac{r}{\sigma}}(\ndist)} \leq \frac{1}{\varepsilon^2 |M_r|q_{\frac{r}{\sigma}}(\ndist)} \tolim{n} 0.
\end{equation}
It follows that for any $\varepsilon > 0$ we have that $H \geq (1-\varepsilon) |M_r| q_{\frac{r}{\sigma}}$ with high probability and so the result follows using \Cref{lem:gaug_lb}.
\end{proof}
We now show that our lower bound on $\Ex{|M_r|}$ from \Cref{lem:matching_exp} also holds with high probability:
\begin{lemma}
\label{lem:matching_hp}
    Let $X_1, \dots, X_n \iid \idist$ and $M_r$ be the largest matching in $G(\{X_1, \dots, X_n\}, r, d, \enorm)$ and assume that for each $n$ assumption \ref{cond:i_pos_low_dim} from \Cref{mod:low_dim} holds for $\idist$ and some $\enorm, R_d, \gamma$ and $\beta$, with the last two constants not depending on $n$. If $e^{-9 \beta d}\left(\frac{r}{R_d}\right)^d n^2 \tolim{n} +\infty$, then with high probability,
   \begin{equation}
    |M_r| \geq \frac{\gamma^2}{32} e^{-6\beta d} \left( \left(n^2 \left( \frac{r}{R_d} \right)^d\right) \wedge n \right).
\end{equation}
\end{lemma}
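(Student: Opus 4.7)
The plan is to upgrade Lemma \ref{lem:matching_exp} from an expectation bound to a high probability bound by combining its constructive proof with a variance estimate deduced from the negative association of multinomial counts.

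I will reuse the packing from the proof of Lemma \ref{lem:matching_exp}: a family $\{B_{i,d}\}_{i=1}^{k_n}$ of pairwise disjoint open balls of radius $r/2$ in the norm $\enorm$ contained in $B_{\enorm}(0, R_d)$ (handling the case $r > l$, with $l = 2R_d e^{-\beta}/(n-1)^{1/d}$, by applying the construction to $l$ instead of $r$, since $|M_r| \geq |M_l|$). Set $U_{i,d} = |\{j \in [n] : X_j \in B_{i,d}\}|$, and define
\[
J_n := \sum_{i=1}^{k_n} \indic{U_{i,d} \geq 2}.
\]
Because the $B_{i,d}$ are pairwise disjoint, whenever $U_{i,d} \geq 2$ we may pick two points from $B_{i,d}$ and add the corresponding edge to a matching of $G(n,r,d,\enorm)$; edges from distinct balls share no vertex, so $|M_r| \geq J_n$. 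Moreover $J_n \geq \sum_i \indic{U_{i,d} = 2}$, so the computation already carried out in the proof of Lemma \ref{lem:matching_exp} yields
\[
\Ex{J_n} \;\geq\; \frac{\gamma^2}{16}\,e^{-6\beta d}\bigl((n^2(r/R_d)^d) \wedge n\bigr).
\]
A short case check shows that the hypothesis $e^{-9\beta d}(r/R_d)^d n^2 \tolim{n} +\infty$ forces the right-hand side to diverge: in the regime $r \leq l$ this is immediate, while in the regime $r > l$ the analysis reduces to applying the same expectation bound with $r' = l$, where the hypothesis combined with $\beta \geq \log(2)$ still produces $\Ex{J_n} \to +\infty$.

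For concentration I will invoke the negative association (NA) of multinomial indicators. For each fixed $j \in [n]$, the vector $(\indic{X_j \in B_{i,d}})_{i \in [k_n]}$ is the indicator vector of a multinomial trial and is thus NA in the sense of Joag--Dev and Proschan; independence across $j$'s extends NA to the entire family $\{\indic{X_j \in B_{i,d}}\}_{(i,j)}$. Since $\indic{U_{i,d} \geq 2}$ is a non-decreasing function of the subfamily $\{\indic{X_j \in B_{i,d}}\}_{j \in [n]}$, and these subfamilies correspond to disjoint index sets as $i$ varies, the defining property of NA yields $\mathrm{Cov}(\indic{U_{i,d} \geq 2}, \indic{U_{k,d} \geq 2}) \leq 0$ for all $i \neq k$. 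Hence
\[
\mathrm{Var}(J_n) \;\leq\; \sum_{i=1}^{k_n} \mathrm{Var}(\indic{U_{i,d} \geq 2}) \;\leq\; \sum_{i=1}^{k_n} \Pr{U_{i,d} \geq 2} \;=\; \Ex{J_n}.
\]
Chebyshev's inequality then gives $\Pr{J_n \leq \Ex{J_n}/2} \leq 4/\Ex{J_n} \to 0$, and combining this with $|M_r| \geq J_n$ and the lower bound on $\Ex{J_n}$ produces the claimed inequality with constant $\gamma^2/32$, the extra factor of $2$ coming from Chebyshev.

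The main obstacle I anticipate is precisely this variance step: the indicators $\indic{U_{i,d} = 2}$ used in the proof of Lemma \ref{lem:matching_exp} are not monotone, so NA does not apply to them directly, and the pairwise covariances computed from the multinomial joint distribution are not uniformly non-positive across parameter choices. The workaround is to weaken ``exactly two'' into ``at least two'' in the definition of the indicator sum: this preserves the expectation lower bound (each ball with at least two points still contributes a matching edge) while restoring monotonicity of the summand in the underlying Bernoulli indicators, thereby making the bound $\mathrm{Var}(J_n) \leq \Ex{J_n}$ available through NA.
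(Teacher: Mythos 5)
Your argument is correct and follows a genuinely different route from the paper's. The paper proves concentration of $I_n = \sum_i \indic{U_{i,d}=2}$ by directly estimating the ratio $\Pr{U_{i,d}=2,\,U_{j,d}=2}/(\Pr{U_{i,d}=2}\Pr{U_{j,d}=2})$ for $i\neq j$: after conditioning on $N$ it manipulates the multinomial probabilities and, by Taylor-expanding the logarithms, shows the ratio tends to $1$, so that $\Ex{I_n^2}/\Ex{I_n}^2 \to 1$. You replace this computation with a negative-association argument, and the crucial enabling step is exactly the one you identify as an ``obstacle'': swapping the non-monotone $\indic{U_{i,d}=2}$ for the monotone $\indic{U_{i,d}\geq 2}$, which still underestimates $|M_r|$ and still inherits the expectation lower bound, but now is a non-decreasing function of the underlying occupancy indicators $\{\indic{X_j\in B_{i,d}}\}_j$. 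For each fixed $j$ those indicators, supported on disjoint balls, are NA; independence across $j$ extends NA to the full array; and non-decreasing functions of disjoint blocks of NA variables have non-positive covariance. This buys you $\mathrm{Var}(J_n) \leq \Ex{J_n}$ in one line, whereas the paper's ratio estimate requires a somewhat delicate asymptotic analysis. Your approach is cleaner, more modular (NA of multinomial allocations is a black-box tool), and would extend with no extra work if one later wanted to strengthen to $\indic{U_{i,d}\geq m}$ or other monotone statistics of occupancy counts.

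One place where you are a bit more optimistic than the paper is the ``short case check'' that $\Ex{J_n}\to\infty$ in the regime $r>l$: after replacing $r$ by $r'=l$, the lower bound on $\Ex{J_n'}$ has the form $c\,n\,2^d e^{-6\beta d}$, and the stated hypothesis $e^{-9\beta d}(r/R_d)^d n^2 \to \infty$ with $r>l$ does not on its own force $n\,2^d e^{-6\beta d}\to\infty$; when $d$ grows fast enough the expectation can stay bounded, although in that regime the right-hand side of the claimed inequality also tends to zero, so the issue is really about whether the target bound is below $1$ rather than about the concentration machinery. The paper elides exactly the same point (``repeating the last part of the proof $\dots$''), so this is a shared rather than a new gap; but if you wanted a watertight write-up you would want to either argue that the conclusion is vacuous once the right-hand side drops below $1$, or note that this cannot happen under the conditions in which the lemma is actually invoked (\Cref{thm:hp_lss_lb} and \Cref{cor:hp_lbs} use a strictly stronger hypothesis and $d=o(\log n)$, which rules it out).
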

\begin{proof}

Letting once again,
\begin{equation}
    l = \frac{2 R_d e^{-\beta}}{(n-1)^{\frac{1}{d}}},
\end{equation}
assume first that $r \leq l$, and thus $r < R_d$. Let $N, B_d, p_{i,d}$, $U_{i,d}$ and $I_n$ be as in the proof of \Cref{lem:matching_exp}.
We will verify that $I_n$ concentrates around its expected value $\Ex{I_n}$, \textit{i.e.} that for any $\varepsilon > 0$,
\begin{equation}
(1-\varepsilon)\Ex{I_n} \leq I_n \leq (1+\varepsilon)\Ex{I_n},
\end{equation}
holds with high probability.
Using Chebyshev's inequality, it suffices to show that:
\begin{equation}
   \frac{\Ex{I^2_n}}{\Ex{I_n}^2} \tolim{n} 1. 
\end{equation}
Note that,
\begin{equation}
\label{eq:sum_1}
    \Ex{I^2_n} = \sum_{j=1}^{k_n} \sum_{i=1}^{k_n} \Pr{U_{i,d} = 2, U_{j,d} = 2}
\end{equation}
and 
\begin{equation}
\label{eq:sum_2}
    \Ex{I_n}^2 = \sum_{j=1}^{k_n} \sum_{i=1}^{k_n} \Pr{U_{i,d} = 2} \Pr{U_{j,d} = 2}.
\end{equation}
Also, using \eqref{eq:ball_prob_bound} and \eqref{eq:bound_kn},
\begin{equation}
    \sum_{i=1}^{k_n} \Pr{U_{i,d} = 2}^2 \leq \sum_{i=1}^{k_n} \Pr{U_{i,d} = 2} \leq \binom{n}{2} e^{2\beta d} \left(\frac{r}{8R_d}\right)^{d}.
\end{equation}
On the other hand, for $k_n \geq 2$, letting $q = \Pr{X_1 \in B_d}$,
\begin{equation}
\begin{split}
    \Ex{I_n}^2 &\geq \sum_{i=1}^{k_n} \sum_{j \in [k_n] \backslash \{ i\}} \Pr{U_{i,d} = 2}\Pr{U_{j,d} = 2} \\
    &\geq \frac{k^2_n}{2} q^4 \binom{n}{2}^2 \frac{e^{-4\beta d}}{16} \left(\frac{r}{2R_d}\right)^{4d} \\
    &\geq q^4 \binom{n}{2}^2 \frac{e^{-4\beta d}}{32} \left(\frac{r}{8R_d}\right)^{2d} \\
    &\gg \binom{n}{2} e^{2\beta d} \left(\frac{r}{8R_d}\right)^{d},
\end{split}
\end{equation}
since $e^{-9 \beta d}\left(\frac{r}{R_d}\right)^d n^2 \tolim{n} +\infty$ by assumption. Since $\Ex{I^2_n} \geq \Ex{I_n}^2$, it follows that both sums \eqref{eq:sum_1} and $\eqref{eq:sum_2}$ are dominated by the sums over terms with $i, j \in [k_n]$ with $i \neq j$. So to conclude that $\frac{\Ex{I^2_n}}{\Ex{I_d}^2} \tolim{n} 1$, it suffices to show that for any $\varepsilon > 0$ and $n$ large enough,
\begin{equation}
\label{eq:ratio_probs}
    \Pr{U_{i,d} = 2}\Pr{U_{j,d} = 2} \geq (1-\varepsilon) \Pr{U_{i,d} = 2, U_{j,d} = 2}, ~ \forall i \neq j \in [k_n].
\end{equation}
Now, given $\delta > 0$,
\begin{equation}
\begin{split}
\Pr{U_{i,d} = 2}\Pr{U_{j,d} = 2} &= \Ex{\binom{N}{2}p^2_{i,d}(1-p_{i,d})^{N-2}}\Ex{\binom{N}{2}p^2_{j,d}(1-p_{j,d})^{N-2}} \\
&\geq \Pr{|N - qn| < \delta qn}^2 \binom{(1-\delta)qn}{2}^2 p^2_{i,d} p^2_{j,d} (1-p_{i,d})^{(1+\delta)qn} (1-p_{j,d})^{(1+\delta)qn},
\end{split}    
\end{equation}
and,
\begin{equation}
\begin{split}
      \Pr{U_{i,d} = 2, U_{j,d} = 2} &= \Ex{\binom{N}{4}\binom{4}{2} p^2_{i,d} p^2_{j,d} (1-p_{i,d}-p_{j,d})^{N-4}} \\
      &\leq \Pr{|N - qn| < \delta qn} \frac{(1+\delta)^4 q^4 n^4}{4} p^2_{i,d} p^2_{j,d} (1-p_{i,d}-p_{j,d})^{(1-\delta)qn-4} \\
      &+ \Pr{|N - qn| \geq \delta qn}. 
\end{split}
\end{equation}
Note that,
\begin{equation}
\begin{split}
\label{eq:exp_conv}
    \log\left( \frac{(1-p_{i,d}-p_{j,d})^{(1-\delta)qn-4}}{(1-p_{i,d})^{(1+\delta)qn} (1-p_{j,d})^{(1+\delta)qn}} \right) 
    &= - qn \log \left( 1 + \frac{p_{i,d} p_{j,d}}{(1 - p_{i,d} - p_{j,d})} \right) \\
    &- (\delta q n + 4) \log \left(1-p_{i,d}-p_{j,d} \right) \\
    &-\delta q n \log\left( 1 - p_{i,d} \right) \\
    &-\delta q n \log\left( 1 - p_{j,d} \right).
\end{split}
\end{equation}
For sufficiently large $n$, using our assumption that $r \leq l$,
\begin{equation}
\begin{split}
\left|qn \log \left( 1 + \frac{p_{i,d} p_{j,d}}{(1 - p_{i,d} - p_{j,d})} \right)\right| &\leq 2 q n p_{i,d} p_{j,d} \\
&\leq 2 q n e^{2\beta d} \left( \frac{r}{2R_d}\right)^{2d}\\
&\leq  2 q n \frac{1}{(n-1)^2} \tolim{n} 0.
\end{split}
\end{equation}
Also, for sufficiently large $n$,
\begin{equation}
    |(\delta q n + 4) \log \left(1-p_{i,d}-p_{j,d} \right)| \leq 2 \delta q n \left|\log\left(1-\frac{2}{(n-1)}\right)\right| \tolim{n} 4 \delta q.
\end{equation}
Since,
\begin{equation}
    (\delta q n + 4) |\log \left(1-p_{i,d}-p_{j,d} \right)| \geq \delta q n |\log\left( 1 - p_{i,d} \right)| \vee \delta q n |\log\left( 1 - p_{j,d} \right)|,
\end{equation}
it follows that,
\begin{equation}
    \limsup_{n \to +\infty} \left|\log\left( \frac{(1-p_{i,d}-p_{j,d})^{(1-\delta)qn-4}}{(1-p_{i,d})^{(1+\delta)qn} (1-p_{j,d})^{(1+\delta)qn}} \right) \right| \leq 12 \delta q,
\end{equation}
thus taking $\delta > 0$ sufficiently small, for all sufficiently large $n$,
\begin{equation}
    \frac{(1-p_{i,d})^{(1+\delta)qn} (1-p_{j,d})^{(1+\delta)qn}}{(1-p_{i,d}-p_{j,d})^{(1-\delta)qn-4}} \geq \left(1-\frac{\varepsilon}{2}\right).
\end{equation}
From this \eqref{eq:ratio_probs} follows taking $n$ large and $\delta$ smaller if necessary to make the other factors of the ratio become closer to $1$. It follows that,
\begin{equation}
    \frac{\Ex{I^2_n}}{\Ex{I_n}^2} \tolim{n} 1,
\end{equation}
from which it follows that $I_n \geq \frac{\Ex{I_n}}{2}$ with high probability. Repeating the last part of the proof of \Cref{lem:matching_exp} for the case $r > l$, it follows that,
\begin{equation}
    |M_r| \geq \frac{\gamma^2}{32} e^{-d(3\beta+\log(8))} \left(n^2 \left( \frac{r}{R_d} \right)^d \wedge n \right)
\end{equation}
with high probability and thus the result follows.

\end{proof}
With the above results the proof of \Cref{thm:hp_lss_lb} follows readily:
\begin{proof}[Proof of \Cref{thm:hp_lss_lb}]
    The result follows by combining \Cref{lem:lss_gen_low_bound} and \Cref{lem:matching_hp} by taking $r = R_d\sigma$.
\end{proof}

\subsection{Information theoretic results}
\label{a:inf_theo_lb}

In this subsection we give the following positive probability analogue of \Cref{thm:lb_low_d}:

\begin{theorem}[Positive probability lower bound]
\label{thm:pos_pr_lb_low_d}
Assume that for each $n$, \Cref{mod:low_dim} in \eqref{eq:final_pos} holds for some $R_d, \enorm, \gamma$ and $\beta$, with the last two constants not depending on $n$. If $e^{-10 \beta d}\sigma^d n^2 \to +\infty$, then as $n \to +\infty$,
\begin{equation}
\label{eq:pos_pr_lb}
       \inf_{\hpi}\Pr{\hamdist{\hpi}{\pi^*} \geq \frac{\gamma^2}{128} e^{-7 \beta d} \left( (n^2 \sigma^d) \wedge n \right)} \geq \frac{(1-o(1))}{8},
\end{equation}
where the infimum above is taken over all estimators $\hpi$.
\end{theorem}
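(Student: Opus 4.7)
The plan is to upgrade the expected-value lower bound in \Cref{thm:lb_low_d} to a probability lower bound by replacing \Cref{lem:matching_exp} with its high-probability counterpart \Cref{lem:matching_hp} and by adding a concentration argument on top of the Assouad reduction behind \Cref{lem:gen_inf_theo_lb}. Set $r = R_d \sigma$. Since $e^{-10\beta d}\sigma^d n^2 \to \infty$ implies the $e^{-9\beta d}\sigma^d n^2 \to \infty$ hypothesis of \Cref{lem:matching_hp}, with probability $1-o(1)$ the matching size satisfies $m := |M_r|\geq\frac{\gamma^2}{32}e^{-6\beta d}((n^2\sigma^d)\wedge n)$; call this event $E_1$.

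Condition on $X\in E_1$, reindex so that $M_r = \{\{2j-1,2j\}\}_{j\in[m]}$, and form the Assouad subfamily $S_\Omega = \{\pi_\omega\}_{\omega \in \{0,1\}^m}\subseteq S_n$ of permutations optionally swapping each matched pair, as in the proof of \Cref{lem:gen_inf_theo_lb}. A probability-valued version of \Cref{lem:minimax_bayes}, obtained by replacing $d_G$ with $\indic{d_G\geq t}$ throughout its proof, together with the deterministic inequality $\hamdist{\hpi}{\pi_\omega}\geq\hamdist{\hat\omega^{(\hpi)}}{\omega}$ already established in the proof of \Cref{lem:gen_inf_theo_lb}, reduces the task to showing that, under $\omega\sim\mathrm{Unif}\{0,1\}^m$, for every estimator $\hat\omega$,
\[
\Pr{\hamdist{\hat\omega}{\omega}\geq \tfrac{1}{2}mp \,\big|\,X}\geq 1-o(1),
\]
where $p := \tfrac{1}{2}t_{R_d}(\ndist)\geq \tfrac{1}{2}e^{-\beta d}$ is the per-pair Le Cam error guaranteed by \Cref{mod:low_dim}.

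The key technical step is then a two-stage concentration argument. Since the $m$ pair-observations $(Y_{2j-1}, Y_{2j})$ are mutually independent across $j$ given $X$, the per-pair Bayes errors $B_j(Y) := \min_{b\in\{0,1\}}\Pr{\omega_j=b\mid Y}\in[0,\tfrac{1}{2}]$ are independent with $\Ex{B_j\mid X}\geq p$, and Hoeffding's inequality yields $\sum_j B_j(Y)\geq (1-o(1))mp$ with probability $1-o(1)$ conditional on $X$. Given such a $Y$, the conditional distribution of $\hamdist{\hat\omega}{\omega}$ is a sum of independent Bernoullis with parameters $\Pr{\hat\omega_j\neq\omega_j\mid Y}\geq B_j(Y)$, so its conditional mean exceeds $\sum_j B_j(Y)$ and Chernoff's lower tail yields $\hamdist{\hat\omega}{\omega}\geq (1-o(1))\sum_j B_j(Y)\geq \tfrac{1}{2}mp$ with probability $1-o(1)$ conditional on $Y$. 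Composing the three $(1-o(1))$-probability events (on the matching size, on $\sum_j B_j$, and on $\hamdist{\hat\omega}{\omega}\mid Y$) and substituting the bounds on $m$ and $p$ delivers
\[
\hamdist{\hpi}{\pi^*}\geq \tfrac{1}{2}mp\geq \frac{\gamma^2}{128}e^{-7\beta d}((n^2\sigma^d)\wedge n)
\]
with probability $1-o(1)$, which is in particular $\geq (1-o(1))/8$.

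The main obstacle is verifying that $mp\to\infty$, which is what drives both concentration steps. In the sparse regime $n\sigma^d\leq 1$ this is immediate: $mp\gtrsim e^{3\beta d}(e^{-10\beta d}n^2\sigma^d)\to\infty$. In the dense regime $n\sigma^d>1$ one has $mp\gtrsim e^{-7\beta d}n$, and I would split on whether $e^{-10\beta d}n$ remains bounded: in the unbounded case $e^{-7\beta d}n\geq e^{-10\beta d}n\to\infty$ directly, while in the bounded case the identity $n\sigma^d = (e^{-10\beta d}n^2\sigma^d)/(e^{-10\beta d}n)$ forces $n\sigma^d\to\infty$, after which a complementary argument (showing that a nearly-complete random geometric graph admits a matching saturating a constant fraction of the $n$ vertices) strengthens \Cref{lem:matching_hp} enough to still guarantee $mp\to\infty$.
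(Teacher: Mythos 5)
Your proposal takes a genuinely different route from the paper's. The paper proves a dedicated coupling lemma, \Cref{lem:gen_pos_pr_lb}: for each matched pair it builds an optimal TV-coupling $(W_j^{(0)},W_j^{(1)})$ of the "swapped" and "unswapped" pair laws, constructs two hidden bitstrings $b,b'$ that yield identical observations but disagree on a $\mathrm{Binom}(m,\tfrac12 t)$-distributed number of coordinates, and then applies a single one-sided Chernoff bound to conclude $\Pr{\gm\geq\tfrac14|M_r|t}\geq 1/8$ whenever $|M_r|t_{r/\sigma}(\ndist)\geq 8$. You instead condition on the design, reduce to an Assouad problem via an indicator-valued version of \Cref{lem:minimax_bayes}, and run a two-stage concentration argument (Hoeffding on the per-pair Bayes errors $B_j(Y)$, then conditional Chernoff on the independent error indicators). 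Your design-conditioned Bayes-error decomposition is a legitimate alternative, and if it closed it would give the stronger $1-o(1)$ probability.

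There are two related problems. First, your Hoeffding step requires $mp^2\to\infty$, not just $mp\to\infty$ as you state: the $B_j$ are only known to lie in $[0,\tfrac12]$ with conditional mean at least $p$, so the deviation bound is of order $e^{-\Theta(\epsilon^2 m p^2)}$, and to take $\epsilon\to 0$ you need $mp^2\to\infty$. Second, and more importantly, your final paragraph acknowledges that in the dense regime with $e^{-10\beta d}n$ bounded the required divergence is not forced by the hypothesis, and the fix you gesture at (showing $|M_r|=\Theta(n)$ once $n\sigma^d\to\infty$, strengthening \Cref{lem:matching_hp}) is not proved and would in any case only give $mp^2=\Theta(ne^{-2\beta d})$, which still need not diverge in exactly the parameter range you are worried about. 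The paper's \Cref{lem:gen_pos_pr_lb} sidesteps this entirely: its only quantitative requirement is that the binomial mean $\tfrac12|M_r|t$ exceed a fixed constant, a much weaker demand than the divergence your Hoeffding/Chernoff machinery needs. This is precisely the design choice that lets the paper's proof of \Cref{thm:pos_pr_lb_low_d} reduce to "apply \Cref{lem:matching_hp} and \Cref{lem:gen_pos_pr_lb} with $r=R_d\sigma$" with no extra casework, and it is the part of the argument your proposal does not replace.
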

This result in particular completes the picture given by Theorem 3 in \cite{wang2022random} for $d = o(\log n)$ and generalizes their conclusions when $d$ is fixed for a wider class of distributions:

\begin{corollary}
\label{cor:hp_lbs}
    Given the assumptions of \Cref{thm:pos_pr_lb_low_d} and $d = o(\log n)$, for any estimator $\hpi$ and $\alpha \in [0,1)$, if
    \begin{equation}
        \gm = o(n^{\alpha}) \text{ with high probability},
    \end{equation}
    then,
    \begin{equation}
            \sigma = o\left( n^{\frac{\alpha - 2}{d}}\right).
    \end{equation}
    If we assume additionally that $d$ is fixed, the result also holds for $\alpha = 1$.
\end{corollary}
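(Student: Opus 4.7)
The plan is to apply \Cref{thm:pos_pr_lb_low_d} to $\hpi$ and combine the positive-probability lower bound it provides with the assumed high-probability upper bound on $\gm$. The hypothesis of the theorem, $e^{-10\beta d}\sigma^d n^2 \to \infty$, is essentially automatic: since $d = o(\log n)$ implies $e^{10\beta d} = n^{o(1)}$, failure of this hypothesis already forces $\sigma^d = O(n^{-2+o(1)})$, which is stronger than the claim. So one may assume the hypothesis holds.

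The theorem then yields $\Pr{\gm \geq L_n} \geq \frac{1-o(1)}{8}$ for $L_n := \frac{\gamma^2}{128} e^{-7\beta d}(n^2\sigma^d \wedge n)$. Interpreting $\gm = o(n^\alpha)$ with high probability as $\gm/n^\alpha \to 0$ in probability gives $\Pr{\gm > \delta n^\alpha} = o(1)$ for each fixed $\delta > 0$. Intersecting these two events produces, for each $\delta > 0$ and large $n$, a nonempty event on which $L_n \leq \delta n^\alpha$; hence $L_n = o(n^\alpha)$.

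To conclude, I would rule out the wedge term equaling $n$: that would make $L_n = n^{1-o(1)}$, which is not $o(n^\alpha)$ for $\alpha < 1$ (and has exact order $n$ in the fixed-$d$, $\alpha = 1$ case). So the active term is $n^2\sigma^d$, and $L_n = o(n^\alpha)$ rewrites as $n^2\sigma^d = o(n^{\alpha + o(1)})$, i.e., $\sigma^d = o(n^{\alpha - 2 + o(1)})$. Taking $d$-th roots and absorbing the subpolynomial residual yields $\sigma = o(n^{(\alpha-2)/d})$; for fixed $d$ the residual is a genuine constant, which also gives the extension to $\alpha = 1$.

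The main subtlety is the $d$-th root step when $d \to \infty$: the factor $e^{7\beta d}$ is merely subpolynomial in $n$, but its $d$-th root need not vanish, so a priori one only gets $\sigma = O(n^{(\alpha-2)/d})$ rather than the stronger $o$. Closing this gap requires using the rate $d = o(\log n)$ sharply, absorbing the residual constant into the $o$-notation on the exponent; the fixed-$d$ case avoids this entirely.
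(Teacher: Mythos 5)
Your argument follows the paper's route exactly: apply \Cref{thm:pos_pr_lb_low_d}, intersect the resulting positive-probability lower bound with the high-probability bound $\gm = o(n^\alpha)$ to force $L_n = o(n^\alpha)$ (the paper only invokes $\Pr{\gm \leq n^\alpha}\to 1$, giving the weaker $L_n \leq n^\alpha$ eventually), rule out the $n$-branch of the minimum using $\alpha < 1$, and solve for $\sigma$. Your preliminary disposal of the theorem's hypothesis $e^{-10\beta d}\sigma^d n^2 \to \infty$ is a helpful addition the paper skips, though it should be phrased along subsequences since failure of the hypothesis only means a bounded subsequence exists.

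Your last paragraph flags a genuine subtlety, and you are right to be unsure it can be closed. From $\sigma^d = o\bigl(e^{7\beta d}n^{\alpha-2}\bigr)$, write $\sigma^d n^{2-\alpha} = \tau_n e^{7\beta d}$ with $\tau_n \to 0$; taking $d$-th roots gives $\sigma\, n^{(2-\alpha)/d} = \tau_n^{1/d}e^{7\beta}$, and when $d \to \infty$ the factor $\tau_n^{1/d}$ need not vanish (take $\tau_n = 2^{-d}$). The ``residual constant'' $e^{7\beta}$ is genuinely a constant, so it cannot be absorbed into an $o$-term, and the argument---yours and the paper's---in fact delivers only $\sigma = O(n^{(\alpha-2)/d})$ when $d$ grows. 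The paper's proof jumps directly from $n^\alpha \geq c\,e^{-7\beta d}n^2\sigma^d$ to $\sigma = o(n^{(\alpha-2)/d})$ without addressing this, so this is a point where the paper is being cavalier rather than a missing idea you could have supplied. For fixed $d$, as you note, $\tau_n^{1/d} \to 0$ and the $o$-conclusion is clean, which is also how the paper handles $\alpha = 1$.
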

\begin{proof}
    $\gm = o(n^{\alpha})$ with high probability implies that for any $\varepsilon > 0$,
    \begin{equation}
        \Pr{\gm \leq \varepsilon n^{\alpha}} \tolim{n} 1.
    \end{equation}
    However, from our lower bound in \Cref{thm:pos_pr_lb_low_d},
    \begin{equation}
        \varepsilon n^{\alpha} \geq \frac{\gamma^2}{128} e^{-7\beta d} \left( (n^2 \sigma^d) \wedge n \right),
    \end{equation}
    for sufficiently large $n$. Since $e^{-7\beta d}n = n^{1-o(1)} > n^{\alpha}$ for sufficiently large $n$, this implies that,
    \begin{equation}
        \varepsilon n^{\alpha} \geq \frac{\gamma^2}{128} e^{-7\beta d} n^2 \sigma^d,
    \end{equation}
    and thus,
    \begin{equation}
        \sigma = o\left( n^{\frac{\alpha - 2}{d}}\right),
    \end{equation}
    as claimed. If we assume that $d$ is fixed and $\gm = o(n)$ with high probability, then for any $\varepsilon > 0$,
    \begin{equation}
        \Pr{\gm < \varepsilon \frac{\gamma^2}{128} e^{-7\beta d}n } \tolim{n} 1
    \end{equation}
    and in particular we must have:
    \begin{equation}
        \varepsilon n \geq n^2 \sigma^d,
    \end{equation}
    which implies $\sigma = o(n^{-\frac{1}{d}})$.
\end{proof}
In order to prove \Cref{thm:pos_pr_lb_low_d} we prove the following analogue of \Cref{lem:gen_inf_theo_lb}:

\begin{lemma}[Positive probability lower bound on $\gm$]
\label{lem:gen_pos_pr_lb}
Given the same setting as \Cref{lem:gen_inf_theo_lb}, if $|M_r|t_{\frac{r}{\sigma}}(\ndist) \geq 8$, then for any estimator $\hpi = \hpi(x_1, \dots, x_n, Y_1, \dots, Y_n) \in S_n$,
\begin{equation}
    \Pr{\gm \geq \frac{|M_r|t_{\frac{r}{\sigma}}(\ndist)}{4}} \geq \frac{1}{8},
\end{equation}
for $t_{s}(\ndist)$ as in \eqref{eq:t_def} and $M_r = M_r(\{x_1,\dots,x_n\})$ the largest matching in $G(\{x_1, \dots, x_n \}, r, d, \enorm)$.
\end{lemma}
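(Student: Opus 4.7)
The plan is to mirror the proof of \Cref{lem:gen_inf_theo_lb}, replacing the Assouad expectation bound by a Paley--Zygmund tail bound that exploits a structural feature of the Assouad construction: the data factorises across the pairs of $M_r$, so the estimation sub-problems on distinct pairs are conditionally independent given $Y$.

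First, I reduce to a Bayesian sub-problem exactly as in the proof of \Cref{lem:gen_inf_theo_lb}. Assume as there that $M_r = \{\{2j-1,2j\}\}_{j=1}^m$, set $t := t_{r/\sigma}(\ndist)$, and for each $\omega \in \Omega := \{0,1\}^m$ let $\pi_\omega \in S_\Omega \subseteq S_n$ be the associated permutation, $P_\omega$ the associated joint law of $(Y_1,\ldots,Y_n)$, and $\hat{\omega}^{(\hpi)}$ the converted estimator satisfying $\hamdist{\hpi}{\pi_\omega}\ge \hamdist{\hat{\omega}^{(\hpi)}}{\omega}$. Applying \Cref{lem:minimax_bayes} to the indicator loss $\indic{\hamdist{\cdot}{\cdot} \ge mt/4}$ (which is $S_n$-invariant) and using $S_\Omega \subseteq S_n$, it suffices to show
\begin{equation*}
    \Pr{\hamdist{\hat{\omega}^{(\hpi)}}{\omega} \ge \tfrac{mt}{4}} \ge \tfrac{1}{8}, \quad \omega \sim \text{Unif}(\Omega).
\end{equation*}

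Because $P_\omega$ factorises over the pairs $(Y_{2j-1},Y_{2j})$, the posterior of $\omega$ given $Y$ is a product of independent Bernoullis $\omega_j | Y \sim \mathrm{Ber}(q_j(Y))$, where each $q_j$ depends only on $(Y_{2j-1},Y_{2j})$. Hence for any estimator $\hat{\omega}$ the error indicators $I_j := \indic{\hat{\omega}_j \ne \omega_j}$ are conditionally independent given $Y$ and satisfy $\Pr{I_j=1 \,|\, Y} \ge r_j^*(Y) := \min(q_j(Y),1-q_j(Y))$; coupling through common uniforms shows that $\sum_j I_j$ stochastically dominates the Bayes-optimal error $X^* := \sum_j I_j^*$, where $\Pr{I_j^*=1 \,|\, Y} = r_j^*(Y)$ and the $I_j^*$ are conditionally independent given $Y$. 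So it is enough to prove $\Pr{X^* \ge mt/4} \ge 1/8$.

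I now estimate the first two moments of $X^*$. Le~Cam's two-point bound applied to each pair (combined with $\|x_{2j-1}-x_{2j}\|<r$ and the translation/dilation invariance of total variation, exactly as in \eqref{eq:tv_bound}) gives $\Ex{r_j^*(Y)} \ge t/2$, so $\Ex{X^*} \ge mt/2 \ge 4$. For the variance, I use the decomposition $\mathrm{Var}(X^*) = \Ex{\mathrm{Var}(X^*|Y)} + \mathrm{Var}(\Ex{X^*|Y})$: the first term equals $\Ex{\sum_j r_j^*(Y)(1-r_j^*(Y))} \le \Ex{X^*}$, and for the second term the random variables $r_j^*(Y)$ depend on disjoint components of $Y$ and so are \emph{mutually independent}, giving
\begin{equation*}
    \mathrm{Var}\!\left(\sum_j r_j^*(Y)\right) = \sum_j \mathrm{Var}(r_j^*(Y)) \le \sum_j \Ex{(r_j^*(Y))^2} \le \tfrac{1}{2}\sum_j \Ex{r_j^*(Y)} = \tfrac{1}{2}\Ex{X^*},
\end{equation*}
using $r_j^* \le 1/2$. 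Combining yields $\mathrm{Var}(X^*) \le (3/2)\Ex{X^*}$, and Paley--Zygmund with $\theta=1/2$ gives
\begin{equation*}
    \Pr{X^* \ge \tfrac{1}{2}\Ex{X^*}} \ge \frac{(\Ex{X^*})^2/4}{\mathrm{Var}(X^*)+(\Ex{X^*})^2} \ge \frac{1}{4\bigl(1+(3/2)/\Ex{X^*}\bigr)} \ge \frac{2}{11} > \frac{1}{8},
\end{equation*}
since $\Ex{X^*} \ge 4$. As $\tfrac{1}{2}\Ex{X^*} \ge mt/4$, the required tail bound follows. The main obstacle is the variance estimate: the naive reverse-Markov bound applied to $X^* \in [0,m]$ would only give $\Pr{X^* \ge mt/4} \gtrsim t$, which degrades when $t$ is small. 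What rescues the argument is the mutual independence of the per-pair Bayes errors $r_j^*(Y)$, inherited from the product structure of the Assouad construction over $M_r$.
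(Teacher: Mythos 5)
Your proof is correct and takes a genuinely different route from the paper's. The paper argues via an explicit coupling: for each matched pair it takes an optimal TV coupling $(W_j^{(0)},W_j^{(1)})$ of the two possible laws of $(Y_{2j-1},Y_{2j})$ and builds two bit-strings $b, b'\in\{0,1\}^m$ such that the observations $Y^{(b)}$ and $Y^{(b')}$ coincide almost surely while $\hamdist{b}{b'}$ stochastically dominates a $\mathrm{Binom}(m,\tfrac{1}{2}t_{r/\sigma}(\ndist))$ random variable; the triangle inequality then forces any estimator to be $\Omega(mt)$-far from $\pi_b\circ\pi^*$ or $\pi_{b'}\circ\pi^*$, and a Chernoff lower-tail bound on the binomial gives the $1/8$. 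You instead reduce to the Bayesian sub-problem over $\Omega$, observe that the posterior factorises into independent Bernoullis with means $q_j(Y)$ depending only on $(Y_{2j-1},Y_{2j})$, dominate the error count by the Bayes-optimal error $X^*=\sum_j I_j^*$, and obtain the tail bound from the first two moments of $X^*$ via Paley--Zygmund. Both proofs exploit the same structural feature (the data factorises across the pairs of $M_r$), but yours replaces the coupling-plus-Chernoff step by a moment computation; the crucial variance decomposition via the law of total variance, together with the mutual independence of the $r_j^*(Y)$, gives $\mathrm{Var}(X^*)\le\tfrac{3}{2}\Ex{X^*}$ and keeps the bound from degrading as $t\to 0$, exactly as you note. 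Two points worth tidying: first, you invoke \Cref{lem:minimax_bayes} for the loss $\indic{\hamdist{\cdot}{\cdot}\ge mt/4}$, which is right-invariant but is not a distance metric as the lemma is literally stated; the lemma's proof uses only invariance, so the extension is immediate, but this deserves a sentence. Second, the reduction ``it suffices to bound the $\Omega$-Bayes risk'' actually uses \Cref{lem:minimax_bayes} twice (once with $G=S_n$ and once with $G=(\Z/2\Z)^m$ acting on $\Omega$ by coordinate swaps), or alternatively the standard inequality that the minimax risk over $S_n$ dominates the minimax risk over $S_\Omega$ followed by the Bayes--minimax identity on $\Omega$; spelling this chain out explicitly would make that compressed sentence airtight.
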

\begin{proof}
    Assume once again for ease of notation that $M_r = \{\{ 2j-1,2j\} \}_{j=1}^m$ and let $\ndist(v, \sigma)$ denote the law of $v + \sigma \nnoise$, $\nnoise \sim \ndist$. Note that given $P$ and $Q$ two measures on $(\R^{d})^2$,
    \begin{equation}
        \tvnorm{P - Q} = \inf_{(W^{(0)},W^{(1)})} \Pr{ W^{(0)} \neq W^{(1)} },
    \end{equation}
    where the infimum is taken over all pairs $(W^{(0)},W^{(1)})$ such that $W^{(0)} \sim P$ and $W^{(1)} \sim Q$. Following \cite{villani2009optimal} Theorem 4.1 we have that for any pair $x_{2j-1}, x_{2j} \in \R^d$ there exists an optimal coupling $(W^{(0)},W^{(1)})$ in $(\R^{d})^2 \times (\R^d)^2$ with,
    \begin{equation}
        W^{(0)} \sim Q(x_{2j-1},\sigma) \otimes Q(x_{2j},\sigma) \text{ and } W^{(1)} \sim Q(x_{2j},\sigma) \otimes Q(x_{2j-1},\sigma)
    \end{equation}
    such that,
    \begin{equation}
       \Pr{W^{(0)} = W^{(1)}} = 1 - \tvnorm{Q(x_{2j-1},\sigma) \otimes Q(x_{2j},\sigma) - Q(x_{2j},\sigma) \otimes Q(x_{2j-1},\sigma)} =: p_j.
    \end{equation}
    Note that from \eqref{eq:tv_bound} we know that for all $j \in [m]$,
    \begin{equation}
    \label{eq:pj_lb}
        p_j \geq t_{\frac{r}{\sigma}}(\ndist).
    \end{equation}
    Let $\{(W^{(0)}_j,W^{(1)}_j)\}_{j=1}^m$ be independent optimal couplings of the respective collection of pairs of laws,
    \begin{equation}
        \{ \left(Q(x_{2j-1},\sigma) \otimes Q(x_{2j},\sigma), Q(x_{2j},\sigma) \otimes Q(x_{2j-1},\sigma) \right) \}_{j=1}^m.
    \end{equation}
   Let $C_j$ be the indicator of the event $W^{(0)}_j = W^{(1)}_j$. We define $b$ and $b'$ two random elements of $\{0,1\}^m$ by doing the following procedure independently for each $j \in [m]$:
    \begin{enumerate}[(i)]
        \item $b_j, b'_j \iid \text{Unif}(\{0, 1\})$ if $C_j = 1$,
        \item $b_j \sim \text{Unif}(\{0, 1\})$ and $b'_j = b_j$ if $C_j = 0$.
    \end{enumerate}
The key aspect of this construction is that although the joint pair $(b_j,b_j')$ depends on $C_j$, taken individually, both $b_j$ and $b'_j$ have the same marginal distribution and are each independent of $C_j$, and thus from $W_j^{(0)}, W_j^{(1)}$. Let $\nnoise_{1},\dots, \nnoise_n \iid \ndist$ be generated independently. Given $\omega \in \{0,1\}^m$ we define a sequence of final positions $Y^{(\omega)}_1, \dots, Y^{(\omega)}_n$ by:
    \begin{equation}
        (Y^{(\omega)}_{2j-1},Y^{(\omega)}_{2j}) = W^{(\omega_j)}_j, ~ j \in [m],
    \end{equation}
    and,
    \begin{equation}
        Y^{(\omega)}_i = x_i + \sigma \nnoise_i, ~ i = 2m + 1, \dots, n.
    \end{equation}
    Note that $Y^{(b)}_1, \dots, Y^{(b)}_n = Y^{(b')}_1, \dots, Y^{(b')}_n$ by construction. Moreover, conditioned on $b = \omega$, since $b$ is independent of the $W_j^{(0)}$'s and $W_j^{(1)}$'s,
    \begin{equation}
        Y^{(b)}_1, \dots, Y^{(b)}_n \sim \bigotimes_{i=1}^n \ndist\left( x_{\pi_{\omega}(i)}, \sigma \right),
    \end{equation}
    where $\pi_\omega = \prod_{j=1} ((2j-1)2j)^{\omega_j}$. By symmetry the same conclusion holds for $b'$. Since $b, b' \sim \mathrm{Unif}\left( \{ 0,1 \}^m \right)$ it follows that,
    \begin{equation}
        (b,Y^{(b)}_1, \dots, Y^{(b)}_n) \stackrel{(d)}{=} (b',Y^{(b')}_1, \dots, Y^{(b')}_n).
    \end{equation}
    Let $\pi^* \sim \mathrm{Unif}(S_n)$ be sampled independently. Note that conditioned on $b = \omega$ and $\pi^* = \pi$,
    \begin{equation}
        Y^{(b)}_{\pi^*(1)}, \dots, Y^{(b)}_{\pi^*(n)} \sim \bigotimes_{i=1}^n \ndist\left( x_{(\pi_{\omega} \circ \pi)(i)}, \sigma \right).
    \end{equation}
    Since $\pi_b \circ \pi^*, \pi_{b'} \circ \pi^* \sim \mathrm{Unif}(S_n)$, it follows that,
    \begin{equation}
    \begin{split}
        (\pi_{b} \circ \pi^*,Y^{(b)}_{\pi^*(1)}, \dots, Y^{(b)}_{\pi^*(n)}) &\stackrel{(d)}{=} (\pi_{b'} \circ \pi^*,Y^{(b')}_{\pi^*(1)}, \dots, Y^{(b')}_{\pi^*(n)}) \\
        &\stackrel{(d)}{=} (\pi^*,Y_1,\dots,Y_n),
    \end{split}
    \end{equation}
    where $Y_1,\dots,Y_n$ is defined by:
    \begin{equation}
        Y_i = x_{\pi^*(i)} + \sigma \nnoise_i, ~ i \in [n].
    \end{equation}
    In particular, since $\hpi(x_1, \dots, x_n, Y^{(b)}_{\pi^*(1)}, \dots, Y^{(b)}_{\pi^*(n)}) = \hpi(x_1, \dots, x_n, Y^{(b')}_{\pi^*(1)}, \dots, Y^{(b')}_{\pi^*(n)})$, setting,
    \begin{equation}
        \hpi_b = \hpi(x_1, \dots, x_n, Y^{(b)}_{\pi^*(1)}, \dots, Y^{(b)}_{\pi^*(n)}),
    \end{equation}
    for ease of notation, we have by the triangular inequality,
    \begin{equation}
    \label{eq:trig_ineq_dh}
    \begin{split}
        \hamdist{\hpi_b}{\pi_b \circ \pi^*} + \hamdist{\hpi_b}{\pi_{b'} \circ \pi^*} &\geq  \hamdist{\pi_b \circ \pi^*}{\pi_{b'} \circ \pi^*}\\
        &= 2\hamdist{b'}{b}.
    \end{split}
    \end{equation}
    Now, by a coupling argument using \eqref{eq:pj_lb} and the independence of $\{(b_j, b'_j) \}_{j=1}^m$, 
    \begin{equation}
        \hamdist{b'}{b} \geq H,
    \end{equation}
    where $H \sim \mathrm{Binom}(m, \frac{1}{2}t_{\frac{r}{\sigma}}(\ndist))$. Since $|M_r|t_{\frac{r}{\sigma}}(\ndist) \geq 8$ using Chernoff's inequality,
    \begin{equation}
         \Pr{H \geq \frac{t_{\frac{r}{\sigma}}(\ndist)|M_r|}{4}} \geq \Pr{H > \frac{\Ex{H}}{2}} \geq 1 - e^{-\frac{\Ex{H}}{8}}\geq \frac{1}{4}.
    \end{equation}
    We thus have that using \eqref{eq:trig_ineq_dh}:
    \begin{equation}
    \begin{split}
        \Pr{\hamdist{\hpi_b}{\pi_b \circ \pi^*} \geq \frac{t_{\frac{r}{\sigma}}(\ndist)|M_r|}{4} } + \Pr{\hamdist{\hpi_b}{\pi_{b'} \circ \pi^*} \geq \frac{t_{\frac{r}{\sigma}}(\ndist)|M_r|}{4} } &\geq \Pr{\hamdist{b'}{b} \geq \frac{t_{\frac{r}{\sigma}}(\ndist)|M_r|}{4} } \\
        &\geq \Pr{H \geq \frac{t_{\frac{r}{\sigma}}(\ndist)|M_r|}{4}} \\
        &\geq \frac{1}{4}.
    \end{split}
    \end{equation}
    Note that,
    \begin{equation}
    \begin{split}
        \hamdist{\hpi(x_1, \dots, x_n, Y^{(b)}_1, \dots, Y^{(b)}_n)}{\pi_b \circ \pi^*} &\stackrel{(d)}{=} \hamdist{\hpi(x_1, \dots, x_n, Y_1, \dots, Y_n)}{\pi^*} \\ 
        &\stackrel{(d)}{=} \hamdist{\hpi(x_1, \dots, x_n, Y^{(b')}_1, \dots, Y^{(b')}_n)}{\pi_{b'} \circ \pi^*}
    \end{split}
    \end{equation}
    and so,
    \begin{equation}
        \Pr{\hamdist{\hpi(x_1, \dots, x_n, Y_1, \dots, Y_n)}{\pi^*} \geq \frac{t_{\frac{r}{\sigma}}(\ndist)|M_r|}{4} } \geq \frac{1}{8},
    \end{equation}
    as desired.
    
\end{proof}
We are now in position to prove \Cref{thm:pos_pr_lb_low_d}:
\begin{proof}[Proof of \Cref{thm:pos_pr_lb_low_d}]
    Just combine \Cref{lem:gen_pos_pr_lb} and \Cref{lem:matching_hp} taking $r = R_d \sigma$.
\end{proof}

\pagebreak

\section{Recovering lower bounds for the logarithmic regime in the Gaussian model}
\label{a:log_reg}

In this section we show that if \Cref{conj:size_largest_matching} holds, then we can use \Cref{lem:lss_gen_low_bound} to recover the lower bound from \cite{geo_match_2022} on the logarithmic regime for some choices of $a > 0$ and $\sigma^2 > 0$.

We assume the Gaussian model from \cite{geo_match_2022}, \ie $\idist = \mathcal{N}(0,I_d)$ and $\ndist = \mathcal{N}(0,\sigma^2 I_d)$ in \eqref{eq:final_pos}, and the logarithmic regime \ie $d = a\log n$. Using the framework from \Cref{lem:lss_gen_low_bound}, for the Gaussian model we have:
\begin{equation}
    q_{\frac{r}{\sigma}}(\ndist) = \parent{1-\Phi\left(\frac{r}{\sqrt{2}\sigma}\right)},
\end{equation}
where $\Phi$ is the cumulative distribution function of a standard Gaussian random variable.
Let,
\begin{equation}
    I(t) = \frac{1}{2}\left( t - 1 - \log\left( t \right) \right) \text{ for } t > 0
\end{equation}
be the large deviation rate function of a chi-squared random variable with $1$ degree of freedom. For a fixed $\sigma^2 > 0$ consider $r^2 = \gamma d$ for some $\gamma \in (0,2)$. Letting $G(n,r,d) = G(n,r,d,\ltwo{\,\cdot\,})$ be the random geometric graph induced by joining initial positions within Euclidean distance $r$, we have that:

\begin{align}
    \log\left(\Ex{E(G(n,r,d))} \left( 1 - \Phi \left(\frac{r}{\sqrt{2} \sigma }\right) \right)\right)
    &= \log \left( \binom{n}{2} \right) + \log \left( \Pr{ \| X_1 - X_2 \|_2^2 \leq \gamma d } \right) + \log \left(1 - \Phi \left(\frac{\sqrt{\gamma d}}{\sqrt{2 }\sigma}\right) \right).
\end{align}
Dividing all terms by $\log n$, taking the limit as $n \to \infty$, and using that $d = a \log n$,

\begin{align}
    \lim_{n \to +\infty} \frac{\log\left(\Ex{E(G(n,r,d))} \left( 1 - \Phi \left(\frac{r}{\sqrt{2} \sigma }\right) \right)\right)}{\log n} &= 2 - a I \left(\frac{\gamma}{2} \right) - \frac{a \gamma}{4 \sigma^2} \\
    &= 2 - \frac{a}{2} \left( 2 I \left( \frac{\gamma}{2} \right) + \frac{\gamma}{2 \sigma^2} \right) \\
    & = 2 - \frac{a}{2} \left( \frac{\gamma}{2} \left( 1 + \frac{1}{\sigma^2} \right) - 1 - \log\left( \frac{\gamma}{2} \right) \right).
\end{align}
The last expression is maximized by taking $\gamma^* = 2\left( 1 + \frac{1}{\sigma^2} \right)^{-1}$ which gives a value of $2 - \frac{a}{2} \log \left( 1 + \frac{1}{\sigma^2} \right)$, precisely the rate established in \cite{geo_match_2022}. Let $r^* = \sqrt{\gamma^* d}$. If we assume \Cref{conj:size_largest_matching} then $|M_{r^*}| = \Theta\parent{\Ex{E(G(n,r^*,d))} \wedge n}$ with high probability, from which it follows that:
\begin{equation}
    \frac{\log\parent{|M_{r^*}|}}{\log\parent{\Ex{E(G(n,r^*,d))} \wedge n}} \pc{n} 1,
\end{equation}
since $\Ex{E(G(n,r^*,d))} \to +\infty$.
In particular, if $1 \ll \Ex{E(G(n,r^*,d))} \ll n$, then by \Cref{lem:lss_gen_low_bound}, we have with high probability that,
\begin{equation}
\begin{split}
        \frac{\log\parent{\lssm \vee 1} }{\log\parent{n}} \geq \frac{\log\parent{|M_{r^*}|q_{\frac{r^*}{\sigma}}}-\log(2)}{\log n}.
\end{split}
\end{equation}
Since the last expression converges to $2 - \frac{a}{2} \log \left( 1 + \frac{1}{\sigma^2} \right)$ in probability we have that,
\begin{equation}
\label{eq:log_lb}
    \lssm \vee 1 \geq n^{\alpha + o_p(1)},
\end{equation}
where $\alpha = 2 - \frac{a}{2} \log \left( 1 + \frac{1}{\sigma^2} \right)$.

Unfortunately the additional constraint that $1 \ll \Ex{E(G(n,r^*,d))} \ll n$ implies that $2 - aI\parent{\frac{\gamma^*}{2}} \in (0,1)$ and so in \eqref{eq:log_lb} we have an additional constraint on $a > 0$ and $\sigma^2 > 0$. Moreover, similar to \cite{geo_match_2022} we are unable to establish a lower bound of order $\Omega(n)$ with this strategy, as it also only deals with augmenting cycles of size $2$.

\end{appendices}

\cleardoublepage
\addcontentsline{toc}{subsection}{\refname} 
\printbibliography

\pagebreak

\end{document}